\renewcommand{\bar}{\overline}
\renewcommand{\hat}{\widehat}
\renewcommand{\tilde}{\widetilde}
\newtheorem{thm}{Theorem}[section]
\newtheorem{cor}[thm]{Corollary}
\newtheorem{lem}[thm]{Lemma}
\newtheorem{exa}[thm]{Example}
\newtheorem{assu}[thm]{Assumption}
\newtheorem{com}[thm]{Comment}
\theoremstyle{definition}
\newtheorem{defn}{Definition}[section]
\newcommand{\scr}[1]{\mathscr #1}
\definecolor{wco}{rgb}{0.5,0.2,0.3}
\numberwithin{equation}{section} \theoremstyle{remark}
\newtheorem{rem}{Remark}[section]
\newcommand{\ua}{\uparrow}
\title{{\bf Explicit Numerical Approximations for SDDEs in Finite and Infinite Horizons using the Adaptive EM Method: Strong Convergence and Almost Sure Exponential Stability }
%\thanks{The first author is
%supported by NSFC (No.11401592); The second author is supported by
%NSFC (No.11301030), NNSFC (No.11431014), 985-project.}
}
\author{
{\bf  Ulises Botija-Munoz and Chenggui Yuan
 }\\
\footnotesize{Department of Mathematics, Swansea University, Bay Campus, SA1 8EN, UK}\\
%\footnotesize{School of Mathematics and Statistics, Central South
%University, Changsha 410083, China}\\
%\footnotesize{bellekelly@csu.edu.cn}
}
\begin{document}
\def\A{\mathscr{A}}
\def\G{\mathscr{G}}
\def\eq{\equation}
\def\bg{\begin}
\def\ep{\epsilon}
\def\x{\|x\|}
\def\y{\|y\|}
\def\xr{\|x\|_r}
\def\xrr{(\sum_{i=1}^T|x_i|^r)^{\frac{1}{r}}}
\def\R{\mathbb R}
\def\ff{\frac}
\def\ss{\sqrt}
\def\B{\mathbf B}
\def\N{\mathbb N}
\def\kk{\kappa} \def\m{{\bf m}}
\def\dd{\delta} \def\DD{\Dd} \def\vv{\varepsilon} \def\rr{\rho}
\def\<{\langle} \def\>{\rangle} \def\GG{\Gamma} \def\gg{\gamma}
  \def\nn{\nabla} \def\pp{\partial} \def\EE{\scr E}
\def\d{\text{\rm{d}}} \def\bb{\beta} \def\aa{\alpha} \def\D{\scr D}
  \def\si{\sigma} \def\ess{\text{\rm{ess}}}\def\lam{\lambda}
\def\beg{\begin} \def\beq{\begin{equation}}  \def\F{\scr F} \def\s{\underline s} \def\t{\underline t}  
\def\Ric{\text{\rm{Ric}}} \def \Hess{\text{\rm{Hess}}}
\def\e{\text{\rm{e}}} \def\ua{\underline a} \def\OO{\Omega}  \def\oo{\omega}
 \def\tt{\tilde} \def\Ric{\text{\rm{Ric}}}
\def\cut{\text{\rm{cut}}} \def\P{\mathbb P} \def\ifn{I_n(f^{\bigotimes n})}
\def\C{\scr C}      \def\alphaa{\mathbf{r}}     \def\r{r}
\def\gap{\text{\rm{gap}}} \def\prr{\pi_{{\bf m},\varrho}}  \def\r{\mathbf r}
\def\Z{\mathbb Z} \def\vrr{\varrho} \def\l{\lambda}
\def\L{\scr L}\def\Tilde{\tilde} \def\TILDE{\tilde}\def\II{\mathbb I}
\def\i{{\rm in}}\def\Sect{{\rm Sect}}\def\E{\mathbb E} \def\H{\mathbb H}
\def\M{\scr M}\def\Q{\mathbb Q} \def\texto{\text{o}} \def\LL{\Lambda}
\def\Rank{{\rm Rank}} \def\B{\scr B} \def\i{{\rm i}} \def\HR{Hat{\R}^d}
\def\to{\rightarrow}\def\l{\ell}\def\ll{\lambda}
\def\8{\infty}\def\ee{\epsilon} \def\Y{\mathbb{Y}} \def\lf{\lfloor}
\def\rf{\rfloor}\def\3{\triangle}\def\H{\mathbb{H}}\def\S{\mathbb{S}}\def\1{\lesssim}
\def\va{\varphi}

\def\R{\mathbb R}  \def\B{\mathbf
B}
\def\N{\mathbb N} \def\kk{\kappa} \def\m{{\bf m}}
\def\dd{\delta} \def\DD{\Delta} \def\vv{\varepsilon} \def\rr{\rho}
\def\<{\langle} \def\>{\rangle} \def\GG{\Gamma} \def\gg{\gamma}
  \def\nn{\nabla} \def\pp{\partial} \def\EE{\scr E}
\def\d{\text{\rm{d}}} \def\bb{\beta} \def\aa{\alpha} \def\D{\scr D}
  \def\si{\sigma} \def\ess{\text{\rm{ess}}}
\def\beg{\begin} \def\beq{\begin{equation}}  \def\F{\mathcal F}
\def\Ric{\text{\rm{Ric}}} \def\Hess{\text{\rm{Hess}}}
\def\e{\text{\rm{e}}} \def\ua{\underline a} \def\OO{\Omega}  \def\oo{\omega}
 \def\tt{\tilde} \def\Ric{\text{\rm{Ric}}}
\def\cut{\text{\rm{cut}}} \def\P{\mathbb P} \def\ifn{I_n(f^{\bigotimes n})}
\def\C{\scr C}      \def\aaa{\mathbf{r}}     \def\r{r}
\def\gap{\text{\rm{gap}}} \def\prr{\pi_{{\bf m},\varrho}}  \def\r{\mathbf r}
\def\Z{\mathbb Z} \def\vrr{\varrho} \def\ll{\lambda}
\def\L{\scr L}\def\Tt{\tt} \def\TT{\tt}\def\II{\mathbb I}
\def\i{{\rm in}}\def\Sect{{\rm Sect}}\def\E{\mathbb E} \def\H{\mathbb H}
\def\M{\scr M}\def\Q{\mathbb Q} \def\texto{\text{o}} \def\LL{\Lambda}
\def\Rank{{\rm Rank}} \def\B{\scr B} \def\i{{\rm i}} \def\HR{\hat{\R}^d}
\def\to{\rightarrow}\def\l{\ell}
\def\8{\infty}\def\X{\mathbb{X}}\def\3{\triangle}
\def\V{\mathbb{V}}\def\M{\mathbb{M}}\def\W{\mathbb{W}}\def\Y{\mathbb{Y}}\def\1{\lesssim}
\def\eq{\eqref}

\def\va{\varphi}
\def\l{\lambda}
\def\var{\varphi}

%my definitions
%-----------------------------------------------------------------
\def\time{0 \leq t \leq T}

%-----------------------------------------------------------------
\renewcommand{\bar}{\overline}
\renewcommand{\hat}{\widehat}
\renewcommand{\tilde}{\widetilde}
  
\maketitle
\begin{abstract}
In this paper we investigate explicit numerical approximations for stochastic differential delay equations (SDDEs) under a local Lipschitz condition by employing the adaptive Euler-Maruyama (EM)  method. Working in both finite and infinite horizons, we achieve strong convergence results by showing the boundedness of the $p$th moments of the adaptive EM solution. We also obtain the order of convergence in finite horizon. In addition, we show almost sure exponential stability of the adaptive approximate solution for both SDEs and SDDEs.   
\end{abstract}
AMS Subject Classification: 60F10, 60H10, 34K26.

Keywords:  Stochastic differential delay equations, Euler-Maruyama adaptive method, infinite horizon, boundedness of the $p$th moments, order of convergence, almost sure exponential stability. 

\section{Introduction}\label{sec1}

Consider the following SDDEs
\begin{equation} \label{SDDE counterexample}
	dY_t = (-2Y_t - Y^3_t+ \ff 1 2 Y_t \sin(Y_{t-1}))dt + \sqrt 2 Y_t \cos(Y_{t-1}) dW_t
\end{equation}
with initial data $\xi \in C([-1,0]; \mathbb R),\xi(0) = c \in \mathbb R / \{0\}.$
Using  \cite[Theorem 1]{Wu}, we can show that the exact solution of the SDDE \eqref{SDDE counterexample} is almost sure exponentially stable, i.e.
$$ \limsup_{t \rightarrow \infty} \ff 1 t \log|Y_t| \leq -\lambda \text{ a.s., } \quad \lambda >0.           $$
However, the discrete (standard) EM approximate solution
\begin{equation}\label{EM counterexample'}
\begin{cases} 
	X_k &= \xi(k \Delta) \quad k = -m, -m+1, ..., 0, \\
	X_{k+1} &= X_k-X_k[(2 +X_k^2  - \frac{1}{2} X_k  \sin(X_{k-1}))\Delta+ \sqrt{2} \cos(X_{k-1}) \Delta W_k], \quad k = 0,1, \ldots
\end{cases}
\end{equation}
where $\Delta = 1 / m, m \in \mathbb N,$
is not almost sure exponentially stable. This means that it does not exist a constant $\eta>0$ and a $\Delta^{*} \in (0,1)$ such that for all $\Delta \in (0, \Delta^{*})$ 
$$ \limsup_{k \rightarrow \infty} \ff 1 {k \Delta} \log|X_k| \leq -\eta  \ \text{ a.s. }.           $$

On the contrary, as we will see in Section \ref{stab SDDEs}, the adaptive EM approximate solution to equation \eqref{SDDE counterexample} is almost sure exponentially stable. \\

The classical existence-and-uniqueness theorem for SDDEs requires the drift and diffusion functions to satisfy a local Lipschitz condition and a linear growth condition (see \cite{Mao2}). However, in applications there are many SDDEs which do not satisfy the linear growth condition. The Khasminskii-type theorem in \cite{Mao3} enables to prove existence-and-uniquess for a class of SDDEs using a weaker condition than the linear growth one. Thus it is desirable, under these weaker conditions, to find numerical approximate solutions that converge strongly to the exact solution. In 2003, Mao \cite{Delay} proved strong convergence using the EM scheme and assuming the boundedness of the $p$th moments for both the exact and the numerical solution. It is well-known that the linear growth condition implies the boundedness of the $p$th moments for the EM approximate solution. But when the drift function grows faster than linear, the standard EM scheme fails, see the example with polynomial growth in Hunter \cite{Hutz}. Therefore, modifications of the EM scheme for SDDEs that provide explicit approximate solutions have appeared in the last few years to account for this issue. Examples of these are the tamed \cite{Tamed} and the truncated \cite{Guo} methods.

In 2020, Wei and Giles \cite{Giles} obtained strong convergence for the numerical solution of a SDEs in a finite horizon under local Lipschitz and one-sided linear growth conditions. They use an adaptive EM scheme in which the time step is not a constant, but a function of the solution at that point in time. They also, under more restrictive conditions, showed strong convergence in infinite horizon. Here, we extend their work to SDDEs in both, finite and infinite horizons.

Additionally, to study the stability of numerical solutions is an important topic. Moment stability for SDDEs has been studied extensively, see for example \cite{Baker, Mao4}. Almost sure (a.s.) exponential stability is usually derived from moment stability by means of the Borel-Cantelli lemma and Markov's inequality (see \cite{Higham3}). In Wu et al. \cite{Wu}, using the EM and the Backward EM (BEM) methods, a.s. exponential stability was studied for SDDEs without using moment stability. Their approach was based on the martingale convergence theorem. They required the linear growth condition when dealing with the standard EM scheme. When they weaken the linear growth condition to the one-sided linear growth condition for the diffusion function, they showed how the standard EM approximate solution loses the stability of the exact solution. Then they showed that under the one-sided linear growth condition, the a.s. exponential stability can be achieved by using the BEM method. This method is implicit and therefore more computationally expensive than explicit methods like the adaptive EM. Here, we obtained a.s. exponential stability using the adaptive method. %We do it first for SDEs, which was not studied in \cite{Giles}, and later for SDDEs.

The rest of the paper is structured as follows. Section \ref{pre} introduces some preliminary notation and the type of SDDE we will work with in the rest of the paper. Section \ref{adaptive EM} describes the adaptive EM method. Section \ref{finite} deals with strong convergence and order of convergence in finite horizon. In Section \ref{infinite} we obtained the boundedness of the $p$th moments for the adaptive EM approximate solution in infinite horizon. In Section \ref{stab SDDEs} we show that almost surely  exponential stability of the adaptive EM solution for SDDEs can be recovered  and provide illustration for  counterexample \ref{SDDE counterexample}. In Section \ref{simulations}, we present some simulations to illustrate the results in Section \ref{stab SDDEs}.

\section{Preliminaries} \label{pre}

Throughout this paper, let $(\Omega, \F, \P)$ be a filtered complete probability space where the filtration $\{ \F_t \}_{t \geq 0}$ satisfies the usual conditions (i.e. it is right continuous and $\F_0$ contains all $\P$-null sets). Let $\tau > 0$ and $T>0$ be constants and denote $C([-\tau,0];\R^m)$ the space of all continuous functions from $[-\tau,0]$ to $\R^m$ with the norm $||\phi|| = \sup_{-\tau \leq \theta \leq 0}|\phi(\theta)|$. Let $\{W_t\}_{\time}$ be a standard $d$-dimensional Brownian motion. For a $\R^m$-vector $v$, we denote the Euclidean norm by $|v|:=(|v_1|^2+...+|v_m|^2)^\ff 1 2$ and the inner product of two $\R^m$-vectors $v$ and $w$ by $\<v,w\>:=v_1 w_1 + ... + v_m w_m.$ For a $m \times d$ matrix A, we denote the Frobenius matrix norm by $||A||:=\sqrt{ \text{trace}(A^T A)}.$ \\
Consider an $m$-dimensional SDDE of the form
\begin{equation} \label{SDE}
	dY_t = f(Y_t, Y_{t - \tau})dt + g(Y_t,Y_{t-\tau})dW_t
\end{equation}
on $t \geq 0,$ where $f:\R^m \times \R^m \rightarrow \R^m$ and $g:\R^m \times \R^m \rightarrow \R^{m \times d}$ are Borel-measurable functions, and the initial data satisfies the following condition: for any $p \geq 2$
$$\{Y(\theta):-\tau \leq \theta \leq 0\}=\xi \in L^p_{\F_0}([-\tau,0];\R^m),$$
that is $\xi$ is a $\F_0$-measurable $C([-\tau,0];\R^m)$-valued random variable such that $E||\xi||^p < \infty.$

\section{Adaptive Method} \label{adaptive EM}
The time step is determined by a function $h^\delta:\R^m \rightarrow \R^+$ with $\delta \in (0,1).$ The family of functions $\{h^\delta\}_{0 < \delta < 1}$ is not specifically defined, it just has to satisfy certain conditions that we will describe later in the next assumption. To see concrete examples where the function $h^\delta$ is fully specified, see Section 4 in \cite{Giles} or the example at the end of this paper.
We now define the adaptive method for SDDEs. Set 
$$\hat X_0:=\xi(0),\ h_0^\delta:=h^\delta(\hat X_0), \ t_1:=h_0^\delta.$$
We introduce the continuous-time step (auxiliary) process $\bar X$. Define
$$\bar X_t := \xi(t), t \in [-\tau,0), \quad \bar X_t := \xi(0),  t \in [0,t_1).$$ For $t_1$ we define the discrete-time approximate solution $\hat X$ as
\begin{align*}
\hat X_{t_1}&:= \hat X_0 + f(\bar X_0,\bar X_{-\tau})h^\delta_0 + g(\bar X_0,\bar X_{-\tau})\Delta W_0, \\ 
h^\delta_1&:=h^\delta(\hat X_{t_1}), \ t_2=t_1+h^\delta_1, \\ 
\bar X_t&:=\hat X_{t_1}, t \in [t_1,t_2),
\end{align*}
where $\Delta W_0:=W_{t_{1}}-W_{0}.$ Then for a generic $t_n$ we define  
\begin{align}
\hat X_{t_{n+1}}&:= \hat X_{t_n} + f(\bar X_{t_n},\bar X_{t_n-\tau})h^\delta_n + g(\bar X_{t_n},\bar X_{t_n-\tau})\Delta W_n, \label{dis-scheme} \\ 
h^\delta_{n+1}&:=h^\delta(\hat X_{t_{n+1}}), \ t_{n+2}=t_{n+1}+h^\delta_{n+1}, \nonumber \\ 
\bar X_t&:=\hat X_{t_{n+1}}, t \in [t_{n+1},t_{n+2}), \nonumber
\end{align}
where $\Delta W_n:=W_{t_{n+1}}-W_{t_n}.$ For every path $\omega \in \Omega$, we continue the recursion (3.1) until $n=N(\omega):= \inf \{n \in \mathbb Z^+: t_n(\omega) \geq T\}.$ Note that $t_n$ and $h^\delta_n$ are random variables. For every $\omega$, let $r=r(\omega)$ be such $t_r \leq \tau \leq  t_{r+1}$. Then we define the continuous-time step (auxiliary) process $\tilde X$ as
\begin{equation}\label{tilde-process}
\tilde X_t := \bar X_{-\tau}, t \in [-\tau,t_1-\tau), \quad \tilde X_t:=\bar X_{t_1-\tau}, t \in [t_1-\tau,t_2-\tau),...., \quad \tilde X_t := \bar X_{t_{r}-\tau},  t \in [t_{r}-\tau,t_{r+1}-\tau), $$
$$\tilde X_t := \bar X_{t_{r+1}-\tau},  t \in [t_{r+1}-\tau,t_{r+2}-\tau), \quad \tilde X_t := \bar X_{t_{r+n}-\tau},  t \in [t_{r+n}-\tau,t_{r+n+1}-\tau)
\end{equation}
for $n=1,...,N-r.$ We now define the the continuous approximate solution
\begin{align} 
	X_t&:= \xi(t),  \quad t \in [-\tau,0];  \nonumber \\
	X_t&:= X_0 + \int_0^t f(\bar X_s, \tilde X_{s-\tau})ds + \int_0^t g(\bar X_s, \tilde X_{s-\tau})dW_s, \quad  t > 0. \label{cont-time approx sol}
\end{align}
Note that $\hat X_{t_n}=\bar X_{t_n}=X_{t_n}$ for $n=0,1,...,N.$
\color{black}

\section{Convergence of the numerical solutions on finite time interval} \label{finite}
In this section we will work on a finite time interval $[-\tau, T], T>0,$ and investigate the convergence of the numerical solutions to the exact solution on $[0,T]$. 

\begin{assu} \label{assu f and g} The functions $f$ and $g$ satisfy the local Lipschitz condition: for every $R>0$ there exists a positive constant $C_R$ such that
	\begin{equation} \label{LL}
		| f(x,y)-f(\bar x, \bar y)| +||g(x,y)-g(\bar x,\bar y)|| \leq C_R (|x- \bar x|+|y- \bar y|)
	\end{equation}
	for all $x,y,\bar x, \bar y \in \R^m$ with $|x|\lor|y|\lor|\bar x|\lor |\bar y| \leq R.$ Furthermore, there exist two constants $\alpha, \beta \geq 0$ such that for all $x,y \in \R^m,$ $f$ satisfies the one-sided linear growth condition: 
	\begin{equation} \label{f one-sided LG}
		\<x,f(x,y)\> \leq \alpha(|x|^2+|y|^2)+ \beta
	\end{equation}
	and $g$ satisfies the linear growth condition: 
	\begin{equation} \label{g LG}
		||g(x,y)||^2 \leq \alpha(|x|^2+|y|^2)+ \beta.
	\end{equation}	
\end{assu}

\begin{assu} \label{assu h}
	The time step function $h^\delta:\R^m \rightarrow \R^+, \ \delta \in (0,1),$ is continuous, strictly positive and bounded by $\delta T,$ i.e.
	\begin{equation}\label{h2}
		0 < h^\delta(x) \leq \delta T  \quad \text{ for all } x \in \R^m.
	\end{equation}
	
	Furthermore, there exist constants $\alpha, \beta > 0$ such that for all $x,y \in \R^m.$
	\begin{equation}\label{h1}
		\<x,f(x,y)\> + \ff 1 2 h^\delta(x)|f(x,y)|^2 \leq \alpha(|x|^2+|y|^2)+ \beta.
	\end{equation}
\end{assu}
Note that condition \eqref{h1} implies condition \eqref{f one-sided LG} with the same values of $\alpha$ and $\beta$.   

\iffalse
\begin{exa}
	The following is an example of time step function $h^\delta$ (taken from \cite{Giles})
	$$h^\delta(x)= \frac{\max(1,|x|)}{\max(1,|f(x)|)} \delta.$$
	It clearly satisfies the requirements of Assumption \eqref{assu h}.
\end{exa}

\color{blue}
\begin{com}
	Condition \eqref{h1} makes possible to obtain a straightforward proof for the boundedness of the $p$th moments of the approximate solution as we will see later in Theorem \ref{thm - boundedness pth moments}. As far as I understand, this is the only (although very important) advantage of the adaptive method EM with respect to the standard (constant time-step) EM method.   
\end{com}

\begin{com}
	Condition \eqref{h2} allow us to work with the approximate solution \eqref{cont-time approx sol} in the same way as we would work with the continuous-time EM approximate solution with constant time-step $\Delta$. With the standard EM method we let $\Delta \rightarrow 0$ and with the adaptive EM method we let $\delta \rightarrow 0$ so we get $h^\delta (X_{t_k}) \leq \delta T \rightarrow 0.$  
\end{com}
\color{black}
\fi

\subsection{The boundedness of the $p$th moments of the exact solution and the numerical solutions}

\subsubsection{Exact solution}

In this subsection we will discuss the $p$th moments of the exact solution to SDDE \eqref{SDE}.
\begin{lem}
	If the SDDE \eqref{SDE} satisfies Assumption \eqref{assu f and g}, then there exists a positive constant $C$ such that for any $p \geq 2$ 
	\begin{equation}
		\E \left[ \sup_{\time} |Y_t|^p \right] \leq C.
	\end{equation} 
\end{lem}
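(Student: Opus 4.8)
The plan is to establish the moment bound via the standard route for SDDEs under a Khasminskii-type condition: apply It\^o's formula to $|Y_t|^p$ (or to $(1+|Y_t|^2)^{p/2}$ to keep everything smooth at the origin), take suprema, and close a Gronwall-type inequality. The one-sided linear growth condition \eqref{f one-sided LG} on $f$ and the linear growth condition \eqref{g LG} on $g$ are precisely what is needed to control the drift term in the It\^o expansion, at the cost of the delayed argument $Y_{t-\tau}$ appearing on the right-hand side.

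First I would write, for $t \in [0,T]$,
\[
|Y_t|^p \le |\xi(0)|^p + p\int_0^t |Y_s|^{p-2}\langle Y_s, f(Y_s,Y_{s-\tau})\rangle\,\d s + \frac{p}{2}\int_0^t |Y_s|^{p-2}\big[(p-1)|Y_s|^{-2}|Y_s^\top g(Y_s,Y_{s-\tau})|^2 + \|g(Y_s,Y_{s-\tau})\|^2\big]\,\d s + M_t,
\]
where $M_t$ is the local martingale coming from the stochastic integral. Using \eqref{f one-sided LG} and \eqref{g LG}, the two Lebesgue integrands are bounded by $C(1+|Y_s|^p + |Y_{s-\tau}|^p)$ for a constant $C$ depending on $p,\alpha,\beta$. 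Splitting the delayed term, $\int_0^t |Y_{s-\tau}|^p\,\d s \le \|\xi\|^p \tau + \int_0^t |Y_s|^p \,\d s$, so after taking expectations (first with a stopping-time localization $\tau_R = \inf\{t: |Y_t|\ge R\}$ to legitimize dropping the martingale, then letting $R\to\infty$ via Fatou) one gets $\E|Y_{t\wedge\tau_R}|^p \le C_1 + C_2\int_0^t \E|Y_{s\wedge\tau_R}|^p\,\d s$, and Gronwall yields $\sup_{t\le T}\E|Y_t|^p \le C$.

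To upgrade from $\sup_t \E|Y_t|^p$ to $\E[\sup_t |Y_t|^p]$, I would go back to the inequality before taking expectations, take the supremum over $t\in[0,T]$ on both sides, and handle the martingale term with the Burkholder--Davis--Gundy inequality: $\E[\sup_{t\le T}|M_t|] \le C\,\E\big(\int_0^T |Y_s|^{2p-2}\|g(Y_s,Y_{s-\tau})\|^2\,\d s\big)^{1/2} \le C\,\E\big(\sup_{s\le T}|Y_s|^p \int_0^T(1+|Y_s|^p+|Y_{s-\tau}|^p)\,\d s\big)^{1/2}$, then apply Young's inequality $ab \le \frac{1}{2p}a^2 + \frac{p}{2}b^2$ to absorb the $\sup_{s\le T}|Y_s|^p$ factor into the left-hand side with a small coefficient. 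Combining with the already-established bound on $\sup_s \E|Y_s|^p$ (which controls the remaining integral in expectation) closes the estimate.

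The main obstacle is purely bookkeeping: the delay term $Y_{s-\tau}$ forces one to carry the initial-data contribution $\E\|\xi\|^p < \infty$ through every estimate and to be careful that the Gronwall argument is applied to the function $t\mapsto \E[\sup_{s\le t}|Y_s|^p]$ (or its localized version) rather than pointwise in $t$, since the supremum and the delay do not commute cleanly. There is no genuine analytic difficulty once the localization by $\tau_R$ is in place to justify finiteness of all quantities before the Fatou limit; the one-sided growth conditions were designed exactly so that the faster-than-linear part of $f$ never needs to be bounded directly. I would also remark that the constant $C$ depends on $p$, $T$, $\alpha$, $\beta$, and $\E\|\xi\|^p$ but not on anything else.
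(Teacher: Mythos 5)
Your argument is correct and is, in substance, the standard proof: the paper itself does not prove this lemma but simply cites Lemma 3.2 of \cite{Tamed}, whose proof is exactly the It\^o--localization--Gronwall--BDG scheme you describe (with the delayed term shifted back onto $[0,t]$ plus the initial-segment contribution $\tau\,\E\|\xi\|^p$). The only cosmetic slip is the coefficient in your It\^o expansion --- the cross term carries $(p-2)$, not $(p-1)$ --- but since you only use it as an upper bound via $|\langle Y_s, g(Y_s,Y_{s-\tau})\cdot\rangle|^2 \le |Y_s|^2\,\|g(Y_s,Y_{s-\tau})\|^2$, nothing is affected.
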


\begin{proof}
	The proof is given in Lemma 3.2 in \cite{Tamed}.
\end{proof}

\subsubsection{Adaptive EM numerical solutions} 

In this subsection, the $p$th moments of numerical solution will be investigated. In the standard Euler-Maruyama method the discretisation times $\{t_n \}$ are built using a constant time step $\Delta$ and a fixed number of steps $N \in \N,$ i.e. $t_N = N \Delta = T.$ However, in the adaptive method, $\{t_n \}$  is a sequence of random variables and there is no guarantee that it reaches $T$ in a finite number of steps. Thus, we have the following definition.

\begin{defn}
	We say that the time horizon $T$ is attainable if $\{t_n\}$ reaches $T$ in a finite number of steps $N$, i.e. for almost all $\omega \in \Omega,$ there exists a $N(\omega)$ such that $t_{N(\omega)} = \sum_{n=0}^{N(\omega)} h^{\delta} (X_{t_n}) \geq T.$ 
\end{defn}  

\begin{thm} \label{thm - boundedness pth moments} 
	If the SDDE \eqref{SDE} and the function $h^\delta$ satisfy Assumption \ref{assu f and g} and \ref{assu h} respectively, then $T$ is attainable and for all $p>0$ there exists	a constant $C>0$ dependent on $T$ and $p,$ but independent of $h^\delta_n,$ such that
	\begin{equation}
		\E \left[ \sup_{\time} |X_t|^p \right] \leq C.
	\end{equation} 
\end{thm}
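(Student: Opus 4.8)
The plan is to prove the theorem in two stages: first establish that $T$ is attainable, and then bootstrap from a low-moment bound to the supremum bound for all $p > 0$. For attainability, suppose for contradiction that on a set of positive probability the sequence $\{t_n\}$ converges to some finite limit $t_\infty \le T$. Since $h^\delta$ is continuous and strictly positive, and $h^\delta_n = h^\delta(\hat X_{t_n}) \to 0$ forces $|\hat X_{t_n}| \to \infty$ (otherwise $h^\delta$ along a subsequence stays bounded away from $0$). I would then show that $\hat X_{t_n}$ cannot blow up in finite time on a positive-probability set, using the one-sided growth structure: apply the discrete It\^o/Dynkin-type computation below to $|\hat X_{t_n}|^2$ and deduce an a priori $L^2$ bound on $\sup_n |\hat X_{t_n}|$ that is incompatible with $|\hat X_{t_n}| \to \infty$. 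This contradiction gives attainability.

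The heart of the argument is the moment bound, and the key observation is that Assumption \ref{assu h}, condition \eqref{h1}, is tailored exactly so that the $h^\delta |f|^2$ term — which is the troublesome term in a standard EM expansion when $f$ grows superlinearly — is absorbed. I would first prove the bound for $p = 2$. Writing one step of \eqref{dis-scheme} and expanding $|\hat X_{t_{n+1}}|^2$,
\begin{align*}
|\hat X_{t_{n+1}}|^2 &= |\hat X_{t_n}|^2 + 2\<\hat X_{t_n}, f(\bar X_{t_n}, \tilde X_{t_n - \tau})\> h^\delta_n + 2\<\hat X_{t_n}, g(\bar X_{t_n}, \tilde X_{t_n-\tau})\Delta W_n\> \\
&\quad + |f(\bar X_{t_n}, \tilde X_{t_n-\tau})|^2 (h^\delta_n)^2 + 2 h^\delta_n \< f, g \Delta W_n\> + |g(\bar X_{t_n}, \tilde X_{t_n-\tau})\Delta W_n|^2.
\end{align*}
Taking conditional expectation given $\F_{t_n}$, the martingale increments vanish (here I use that $\Delta W_n$ is independent of $\F_{t_n}$ and $h^\delta_n$ is $\F_{t_n}$-measurable, with $\E[|\Delta W_n|^2 \mid \F_{t_n}] = d\, h^\delta_n$), and the remaining terms combine into $h^\delta_n\big(2\<\hat X_{t_n}, f\> + h^\delta_n|f|^2\big) + d\, h^\delta_n \|g\|^2$. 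By \eqref{h1} and \eqref{g LG} this is bounded by $C h^\delta_n(|\bar X_{t_n}|^2 + |\tilde X_{t_n - \tau}|^2 + 1)$. Summing over $n$, using $\sum h^\delta_n \le T + \delta T$ (the overshoot past $T$ is at most one step of size $\le \delta T$), and handling the delay term by noting $\tilde X_{t_n - \tau}$ is a past value of the process (so its contribution is controlled by $\E\|\xi\|^2$ plus earlier values of the solution), I get a discrete Gr\"onwall inequality for $\sup_{k \le n} \E|\hat X_{t_k}|^2$ and hence a uniform bound. Passing to the continuous process \eqref{cont-time approx sol} is routine via \eqref{h2} and the linear growth \eqref{g LG} of $g$ together with Doob's and Burkholder-Davis-Gundy inequalities, which upgrades the bound to $\E[\sup_{0\le t\le T}|X_t|^2]$.

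To reach general $p > 0$, for $p \ge 2$ I would repeat the expansion applied to $|\hat X_{t_n}|^p$ (or equivalently apply It\^o's formula to $|X_t|^p$ on the continuous process and control the drift and quadratic-variation terms using \eqref{f one-sided LG}, \eqref{g LG}, and crucially \eqref{h1} to kill the $h^\delta|f|^2$-type contribution that appears in the step-process comparison $|\bar X_s - X_s|$), then close with Gr\"onwall and BDG as before; for $0 < p < 2$ the bound follows from the $p=2$ case by Jensen/H\"older. The main obstacle I anticipate is the bookkeeping around the delay term: because the time steps are random, $\tilde X_{t_n - \tau}$ is not simply $\hat X_{t_{n-m}}$ for a fixed $m$, so I must argue carefully that the shifted step-process $\tilde X$ takes values among $\{\hat X_{t_k} : t_k \le t_n\} \cup \{\xi(\theta)\}$ and that its $L^p$-norms are therefore dominated by the quantity we are bounding plus $\E\|\xi\|^p$ — this is what makes the Gr\"onwall argument self-contained despite the randomness of the mesh. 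A secondary technical point is ensuring all constants are genuinely independent of the realized step sizes $h^\delta_n$, which holds because every estimate uses only the structural bounds \eqref{h2} and \eqref{h1}, never the specific values.
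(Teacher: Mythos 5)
Your computational core matches the paper's — expand $|\hat X_{t_{n+1}}|^2$ one step at a time, use \eqref{h1} to absorb the combination $\<x,f\>+\tfrac12 h^\delta|f|^2$, note $\sum_n h^\delta_n\le T(1+\delta)$, treat the delay term as a past value of the process, and close with BDG and Gr\"onwall — but there are two genuine gaps at exactly the points you gloss over. First, attainability and integrability. Your contradiction argument rests on an a priori $L^2$ bound on $\sup_n|\hat X_{t_n}|$, but deriving $\E[\sup_n|\hat X_{t_n}|^2]<\infty$ requires applying BDG to $\sum_k\<\hat X_{t_k}+fh_k,\,g\Delta W_k\>$ and then absorbing a term $\varepsilon\,\E[\sup_n|\hat X_{t_n}|^2]$ back into the left-hand side, which is legitimate only if that quantity is already known to be finite; a priori these sums are only local martingales. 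This is precisely why the paper introduces the truncated auxiliary scheme \eqref{K-scheme}--\eqref{cont-time approx sol-K} with the projection $\Phi_K$: the truncated iterates are bounded by $K$, so $T$ is trivially attainable there (since $\inf_{|x|\le K}h^\delta(x)>0$) and all expectations are finite; the bound of Lemma \ref{lem - aux boundedness of the pth moments} is uniform in $K$, and Markov's inequality plus monotone convergence then transfer both attainability and the moment bound to the untruncated scheme. Your sketch needs this (or an equivalent localization) to become a proof; the weaker statement $\sup_n\E|\hat X_{t_n}|^2\le C$, obtainable by induction on $n$ without localization, would suffice for your contradiction via Fatou, but it is not what you invoke.

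Second, the passage to general $p\ge 2$ is done in the wrong order, and both routes you propose fail as stated. Expanding $|\hat X_{t_{n+1}}|^p$ directly produces terms such as $(h^\delta_n)^{p}|f|^{p}$ that are not controlled by \eqref{h1}, which only bounds the quadratic combination. Applying It\^o's formula to $|X_t|^p$ for the continuous interpolant produces the drift term $p|X_s|^{p-2}\<X_s,f(\bar X_s,\tilde X_{s-\tau})\>$, whose arguments are mismatched ($X_s$ against $\bar X_s$), so neither \eqref{f one-sided LG} nor \eqref{h1} applies directly; controlling the discrepancy $\<X_s-\bar X_s,f(\bar X_s,\cdot)\>$ reintroduces $|f|$ itself, which is the classical obstruction for explicit schemes with superlinear drift. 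The paper's order of operations is what makes the argument work: telescope the \emph{pathwise} inequality for $|\hat X^K_{t_n}|^2$ (where \eqref{h1} applies exactly), substitute it into the one-step bound for $|X^K_t|^2$, and only then raise the resulting pathwise inequality to the power $p/2$ and apply BDG with exponent $p/2$ to the martingale term. You should restructure your treatment of $p>2$ accordingly.
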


The discrete-time approximate solution defined in \eqref{dis-scheme} need not be bounded. In order to show that $T$ is attainable and prove Theorem \ref{thm - boundedness pth moments}, we need to work with a bounded approximate solution. To this end we now introduce the following auxiliary scheme.
Let $K > ||\xi||.$ Set $\hat X^K_0:=\xi(0), h_0^\delta :=h^\delta(\hat X_0), t_1:=h_0^\delta$ and $\bar X^K_t:=\xi(t),t \in [-\tau,0), \bar X_t^K:=\xi(0), t \in[0,t_1)$. Consider the function $\Phi_K:\R^m \rightarrow \R^m, \Phi(x)=\min(1,K/|x|)x$.  Then for every $\omega \in \Omega$ and for $n=0,1,...N(\omega)$ (where $N(\omega)$ is defined below), we define
\begin{equation}\label{K-scheme}
\begin{aligned}
\hat X^K_{t_{n+1}}&:=\Phi_K(\hat X_{t_n}^K + f(\hat X_{t_n}^K, \bar X^K_{t_n-\tau})h_n^\delta +g(\hat X_{t_n}^K, \bar X^K_{t_n-\tau})\Delta W_n) \\
h^\delta_{n+1}&:=h^\delta(X_{t_{n+1}}^K), t_{n+2}:=t_{n+1}+h^\delta_{n+1}, \\
\bar X^K_t &:= \hat X_{t_{n+1}}^K, t \in [t_{n+1},t_{n+2}).
\end{aligned}
\end{equation}
Define for $n=0,...,N-r$
\begin{equation}\label{tilde-process-K}
\tilde X^K_t:=\bar X^K_{t_n-\tau}, t \in [t_n-\tau,t_{n+1}-\tau),
\end{equation}
where $r=r(\omega)$ is such that $t_r \leq \tau \leq  t_{r+1}.$ We now define the the continuous approximate solution
\begin{align} 
X^K_t&:= \xi(t),  \quad t \in [-\tau,0];  \nonumber \\
X^K_t&:= \Phi_K\left(\hat X_{\t}^K + f(\hat X_{\t}^K, \bar X^K_{\t-\tau})(t-\t) +g(\hat X_{\t}^K, \bar X^K_{\t-\tau})(W_t - W_{\t})\right)  \label{cont-time approx sol-K} \quad  t > 0, 
\end{align}
where $\underline{t}:=\max\{t_n: t_n \leq t\}.$ Note that $X^K_{t_n}=\hat X^K_{t_n}=\bar X^K_{t_n}.$ 
\begin{lem} \label{lem - aux boundedness of the pth moments}
	Let $p \geq 4,$ the SDDE \eqref{SDE} satisfy Assumption \ref{assu f and g} and the function $h^\delta$ satisfy Assumption \ref{assu h}. Then, for the auxiliary scheme defined by \eqref{cont-time approx sol-K}, $T$ is attainable and for all $p \geq 4$ there exists a constant $C$ dependent on $T$ and $p,$ but independent of $h^\delta_n$ and $K$ such that
	\begin{equation}
		\E \left[ \sup_{\time} | X^K_t|^p \right] \leq C.
	\end{equation} 
\end{lem}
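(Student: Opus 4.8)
The plan is to establish boundedness of the $p$th moments of the bounded auxiliary process $X^K$ via a stopping-time argument combined with a discrete Gronwall-type inequality, where the crucial leverage comes from condition \eqref{h1} in Assumption \ref{assu h}. The strategy parallels the standard EM moment-bound proof, but with the adaptive time step $h^\delta_n$ playing the role of $\Delta$, and the key point is that condition \eqref{h1} is precisely what controls the term $\frac12 h^\delta_n |f(\hat X^K_{t_n},\bar X^K_{t_n-\tau})|^2$ that would otherwise be unbounded when $f$ has super-linear growth. Because the scheme \eqref{cont-time approx sol-K} applies $\Phi_K$ at every step, we have $|\hat X^K_{t_n}| \le K$ for all $n$ deterministically, so no localisation is needed to make the increments well-defined; the content is to get a bound \emph{independent of $K$}.

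**First I would** work on the one-step estimate. Writing $Z_{n} := \hat X^K_{t_n} + f(\hat X^K_{t_n},\bar X^K_{t_n-\tau})h^\delta_n + g(\hat X^K_{t_n},\bar X^K_{t_n-\tau})\Delta W_n$ so that $\hat X^K_{t_{n+1}} = \Phi_K(Z_n)$, I would use $|\Phi_K(Z_n)| \le |Z_n|$ to reduce to estimating $\E|Z_n|^2$ and then higher moments. Expanding $|Z_n|^2$ gives
\begin{equation*}
|Z_n|^2 = |\hat X^K_{t_n}|^2 + 2\langle \hat X^K_{t_n}, f(\cdot)\rangle h^\delta_n + |f(\cdot)|^2 (h^\delta_n)^2 + \|g(\cdot)\|^2 |\Delta W_n|^2 + \text{cross terms},
\end{equation*}
and, since $h^\delta_n \le \delta T \le 1$, the term $|f|^2 (h^\delta_n)^2 \le h^\delta_n |f|^2$, so $2\langle \hat X^K_{t_n},f\rangle h^\delta_n + |f|^2(h^\delta_n)^2 \le 2 h^\delta_n\big(\langle \hat X^K_{t_n},f\rangle + \tfrac12 h^\delta_n|f|^2\big) \le 2h^\delta_n(\alpha(|\hat X^K_{t_n}|^2 + |\bar X^K_{t_n-\tau}|^2) + \beta)$ by \eqref{h1}. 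The martingale/cross terms vanish in conditional expectation, and $\E[|\Delta W_n|^2 \mid \F_{t_n}] = m\, h^\delta_n$ (using that $h^\delta_n$ is $\F_{t_n}$-measurable), so $\|g\|^2|\Delta W_n|^2$ contributes another $O(h^\delta_n)$ term controlled by \eqref{g LG}. For $p > 2$ I would similarly expand $|Z_n|^p$, using the Burkholder--Davis--Gundy (or elementary moment) inequalities on the Brownian increment and the fact that $h^\delta_n \le 1$ to absorb the higher powers of $h^\delta_n$; the delay term $|\bar X^K_{t_n-\tau}|^2$ always refers to an earlier node, which is why $p \ge 4$ and a careful bookkeeping over the delay window is needed.

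**Next** I would sum the one-step estimates. Setting $\xi_n := \E|\hat X^K_{t_n}|^p$ and noting $t_{n+1} - t_n = h^\delta_n$, the accumulated bound has the schematic form $\xi_{n+1} \le \xi_n + C h^\delta_n(1 + \xi_n + \xi_{(n-\tau\text{-index})})$, and summing from $0$ to $N$ telescopes the leading term while the sum $\sum_n h^\delta_n$ is bounded by $t_{N}$, which is close to $T$. The delay-shifted terms are handled by $\sup_{0\le j\le n}\xi_j$, leading to $\sup_{0\le n\le N}\xi_n \le C(1 + \E\|\xi\|^p) + C\int_0^{t_N}\sup_{0\le j: t_j\le s}\xi_j\, ds$, and a discrete/continuous Gronwall inequality then gives a bound depending only on $T$ and $p$, uniformly in $K$ and in the step sizes $h^\delta_n$. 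To pass from the discrete nodes to $\sup_{0\le t\le T}|X^K_t|$ I would use the interpolation formula \eqref{cont-time approx sol-K}: on $[t_n, t_{n+1})$, $X^K_t$ is $\Phi_K$ applied to a convex-type combination of $\hat X^K_{t_n}$ and Brownian increments over an interval of length $\le h^\delta_n \le \delta T$, and a Doob maximal inequality applied over this short interval upgrades the nodal bound to the supremum bound. Attainability of $T$ follows as a by-product: if $T$ were not attainable on a set of positive probability, then $\sum_n h^\delta_n < T$ there, forcing $h^\delta_n \to 0$ along that path, but since $h^\delta$ is continuous and strictly positive and $|\hat X^K_{t_n}| \le K$ (bounded), $h^\delta_n$ stays bounded below by a positive constant on the compact set $\{|x|\le K\}$ — a contradiction; more carefully one argues $N$ is a.s. finite because $\sum_n h^\delta_n$ would otherwise converge while each term is bounded below.

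**The main obstacle** I expect is the bookkeeping around the delay together with the irregular (random, $K$-dependent-index) partition: unlike the constant-step EM scheme, the node $t_n - \tau$ generally does not coincide with any $t_j$, so $\tilde X^K$ and $\bar X^K$ evaluated at delayed times must be matched to the correct earlier node via the index $r$ with $t_r \le \tau \le t_{r+1}$, and one must be careful that the Gronwall argument closes — i.e. that the delayed term at "time $s-\tau$" is always dominated by $\sup$ of the moment over $[0,s]$ and never introduces a term at a time later than $s$. A secondary technical point is ensuring every constant genuinely avoids dependence on $K$; this is where the contraction property $|\Phi_K(z)| \le |z|$ is used repeatedly, and one must check it is not silently needed in the "wrong direction" anywhere. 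The restriction $p \ge 4$ (rather than $p\ge 2$) presumably enters when controlling the delay cross-terms and the $(h^\delta_n)^{p/2}$-type remainders uniformly, and I would flag exactly where in the expansion that threshold is used.
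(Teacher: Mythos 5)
Your skeleton matches the paper's: the contraction $|\Phi_K(z)|\le|z|$, the expansion of $|Z_n|^2$ with condition \eqref{h1} absorbing the term $\tfrac12 h^\delta_n|f|^2$, the strong Markov property for moments of $\Delta W_n$ over the random interval $[t_n,t_{n+1}]$, a Gronwall closure, and the compactness argument ($\inf_{|x|\le K}h^\delta(x)>0$) for attainability are all exactly the ingredients the paper uses. The attainability part of your proposal is correct as stated.

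However, there is a genuine gap in the way you propose to close the moment estimate. You set $\xi_n:=\E|\hat X^K_{t_n}|^p$ and run a discrete Gronwall recursion $\xi_{n+1}\le \xi_n + Ch^\delta_n(1+\xi_n+\xi_{(\text{delay})})$, summing up to the random index $N$ and writing $\sup_{0\le n\le N}\xi_n \le C + C\int_0^{t_N}\sup_{t_j\le s}\xi_j\,ds$. This inequality is not well formed: $h^\delta_n$, $n_t$, $t_N$ and the delay-matching index are all random and correlated with $\hat X^K_{t_n}$, while $\xi_n$ is deterministic, so you cannot take expectations step by step and still treat $h^\delta_n$ as a deterministic mesh that telescopes to $T$ outside the expectation. (Likewise $\E[|\Delta W_n|^2\mid\F_{t_n}]=d\,h^\delta_n$, with $d$ the Brownian dimension, remains a random variable that must stay inside the expectation.) The paper avoids this by keeping the recurrence entirely pathwise: it solves the recursion $\omega$-by-$\omega$, rewrites the resulting sums as Riemann and It\^o integrals $\int_0^t(\cdot)\,ds$ against the step processes $\bar X^K$ and $\tilde X^K$ over the \emph{deterministic} interval $[0,t]$, raises to the power $p/2$, takes $\E[\sup_{0\le s\le t}\cdot]$, bounds the four resulting terms by H\"older, BDG and the strong Markov property, and only then applies the continuous Gronwall inequality to $t\mapsto \E[\sup_{0\le s\le t}|X^K_s|^p]$. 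Your interval-by-interval Doob argument for the continuous-time supremum runs into the same randomness-of-the-partition issue; the paper instead handles the local Brownian fluctuation through the conditional estimate $\E[\sup_{t_n\le u\le t_{n+1}}|W_u-W_{t_n}|^p\mid\F_{t_n}]\le Ch_n^{p/2}$ summed inside a single expectation. To repair your argument you would need to replace the discrete Gronwall on $\xi_n$ by exactly this pathwise-integral reformulation, at which point it coincides with the paper's proof.
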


\begin{proof}
Fix $\delta \in  (0,1).$ Since $h^\delta$ is continuous and strictly positive, $\inf_{|x| \leq K} h^\delta(x) >0.$ This implies that for every $\omega \in \Omega$
$$ \liminf_{n \rightarrow \infty} h_n^\delta(\omega) = \liminf_{n \rightarrow \infty} h^\delta(\hat X^K_{t_n}(\omega))  > 0,$$
so $\lim_{n \rightarrow \infty} t_n(\omega) = \sum_{n=0}^{\infty} h_n^\delta(\omega)= \infty$ for all $\omega \in \Omega$ and $T$ is attainable in the bounded scheme. \\
Now we will prove the boundedness of the $p$th moments and the upper bound will be independent of $h^\delta_n$ and $K$. Let $t \in [0,T]$. Define $\underline{t}:=\max\{t_n: t_n \leq t\},$ \ and $n_t:=\max\{n:t_n \leq t\}.$ 
Using \eqref{K-scheme} and since for any $x \in \R^m$, $|\Phi(x)|^2 \leq |x|^2$, we have that for $n = 0$ to $n = n_t -1,$ 
\begin{align*} 
	|\hat X^K_{t_{n+1}}|^2 &\leq  |\hat X^K_{t_n} + f(\hat X^K_{t_n},\bar X^K_{t_n-\tau})h_n + g(\hat X^K_{t_n},\bar X^K_{t_n-\tau})\Delta W_n|^2    \nonumber \\
	&= \<\hat X^K_{t_n},\hat X^K_{t_n}\> + 2 \<\hat X^K_{t_n}, f(\hat X^K_{t_n},\bar X^K_{t_n-\tau})h_n \> + \<f(\hat X^K_{t_n},\bar X^K_{t_n-\tau})h_n, f(\hat X^K_{t_n},\bar X^K_{t_n-\tau})h_n \> \nonumber \\  
	&+ 2\<\hat X^K_{t_n} + f(\hat X^K_{t_n},\bar X^K_{t_n-\tau})h_n, g(\hat X^K_{t_n},\bar X^K_{t_n-\tau})\Delta W_n\> \\
	&+ \<g(\hat X^K_{t_n},\bar X^K_{t_n-\tau})\Delta W_n, g(\hat X^K_{t_n},\bar X^K_{t_n-\tau})\Delta W_n\> \nonumber \\
	&= |\hat X^K_{t_n}|^2 + 2 h_n \left[\<\hat X^K_{t_n},f(\hat X^K_{t_n},\bar X^K_{t_n-\tau})\>+ \ff 1 2 h_n |f(\hat X^K_{t_n},\bar X^K_{t_n-\tau})|^2 \right] \nonumber \\
	&+2 \<\hat X^K_{t_n}+f(\hat X^K_{t_n},\bar X^K_{t_n-\tau})h_n, g(\hat X^K_{t_n},\bar X^K_{t_n-\tau})\Delta W_n \>+|g(\hat X^K_{t_n},\bar X^K_{t_n-\tau})\Delta W_n|^2 \nonumber \\
	&\leq |\hat X^K_{t_n}|^2 + 2 h_n \alpha(|\hat X^K_{t_n}|^2 + |\bar X^K_{t_n-\tau}|^2 )+  2 h_n \beta \nonumber \\
	&+2  \<\hat X^K_{t_n}+f(\hat X^K_{t_n},\bar X^K_{t_n-\tau})h_n,g(\hat X^K_{t_n},\bar X^K_{t_n-\tau})\Delta W_n \>+|g(\hat X^K_{t_n},\bar X^K_{t_n-\tau})\Delta W_n|^2, 
\end{align*}
where in the last step we have used condition \eqref{h1}. Note that, since it is irrelevant in this proof, we have dropped the symbol ``$\delta$" in the adaptive time-step ``$h^\delta_n$" to ease the notation. Solving the recurrence relation, we get 
\begin{align} \label{bound moments - finite - eq1}
	|\hat X^K_{\t}|^2 &\leq  |\hat X^K_0|^2 + 2  \alpha \left( \sum_{n=0}^{n_t -1} |\hat X^K_{t_n}|^2 h_n + |\bar X^K_{t_n-\tau}|^2 h_n  \right)+  2 \beta \t \nonumber \\
	&+2 \sum_{n=0}^{n_t -1} \<\hat X^K_{t_n}+f(\hat X^K_{t_n},\bar X^K_{t_n-\tau})h_n,g(\hat X^K_{t_n},\bar X^K_{t_n-\tau})\Delta W_n \>+ \sum_{n=0}^{n_t -1}|g(\hat X^K_{t_n},\bar X^K_{t_n-\tau})\Delta W_n|^2. 
\end{align}
Similarly, the continuous approximate solution verifies
\begin{align}\label{bound moments - finite - eq2}
	|X^K_t|^2 &\leq |\hat X^K_{\underline t}|^2 + 2 (t-\underline{t}) \alpha(|\hat X^K_{\underline t}|^2 + |\bar X^K_{\underline t -\tau}|^2 )+  2 (t-\underline{t}) \beta \nonumber \\
	&+2  \<\hat X^K_{\underline t}+f(\hat X^K_{\underline t},\bar X^K_{\underline t -\tau})(t-\underline{t}),g(\hat X^K_{\underline t},\bar X^K_{\underline t -\tau})(W_t - W_{\underline{t}}) \>+|g(\hat X^K_{\underline t},\bar X^K_{\underline t -\tau})(W_t - W_{\underline{t}})|^2. 
\end{align}
Substituting \eqref{bound moments - finite - eq1} into \eqref{bound moments - finite - eq2} yields
\begin{align*}
	|X^K_t|^2 &\leq  |\hat X^K_0|^2 + 2  \alpha \left( \sum_{n=0}^{n_t -1} |\hat X^K_{t_n}|^2 h_n + |\bar X^K_{t_n-\tau}|^2 h_n + |\hat X^K_{\underline t}|^2 (t-\underline{t}) + |\bar X^K_{\underline t -\tau}|^2 (t-\underline{t})  \right)+  2 \beta t \\
	&+2 \sum_{n=0}^{n_t -1} \<\hat X^K_{t_n}+f(\hat X^K_{t_n},\bar X^K_{t_n-\tau})h_n,g(\hat X^K_{t_n},\bar X^K_{t_n-\tau})\Delta W_n \>+ \sum_{n=0}^{n_t -1}|g(\hat X^K_{t_n},\bar X^K_{t_n-\tau})\Delta W_n|^2 \\
	&+ 2  \<\hat X^K_{\underline t}+f(\hat X^K_{\underline t},\bar X^K_{\underline t -\tau})(t-\underline{t}),g(\hat X^K_{\underline t},\bar X^K_{\underline t -\tau})(W_t - W_{\underline{t}}) \>+|g(\hat X^K_{\underline t},\bar X^K_{\underline t -\tau})(W_t - W_{\underline{t}})|^2.
\end{align*}
Using the step processes $\bar X^K$ and $\tilde X^K$ defined previously, the second summand on the RHS of the equation above, can be expressed as a Riemann integral. Similarly the forth and the sixth terms can be written as an It\^{o} integral, i.e. 
\begin{align*}
	|X^K_t|^2 &\leq  | X^K_0| + 2 \alpha  \int_0^t (|\bar X^K_s|^2 + |\tilde X^K_{s-\tau}|^2) ds  +  2 \beta t \\
			&+  2\int_0^t \<\bar X^K_s+f(\bar X^K_s,\tilde X^K_{s - \tau})[h(\bar X^K_u)I_{[0,\t)}(u)+(t-\t)I_{[\t,t]}(u)] ,g(\bar X^K_s,\tilde X^K_{s - \tau}) dW_s \>  \\
			&+ \sum_{n=0}^{n_t -1}|g(\bar X^K_{t_n},\tilde X^K_{t_n-\tau})\Delta W_n|^2 + |g(\bar X^K_t,\tilde X^K_{t-\tau})(W_t - W_{\underline{t}})|^2.
\end{align*}
Hence, we have
\begin{align*}
	|X^K_t|^p &\leq 6^{p/2-1} \Bigg\{ |X^K_0|^p + \left(2\alpha \int_0^t (|\bar X^K_s|^2 + |\tilde X^K_{s-\tau}|^2) ds \right)^{p/2} +  (2 \beta t)^{p/2} \\
	&+ \left|2 \int_0^t \<\bar X^K_s+f(\bar X^K_s,\tilde X^K_{s - \tau})[h(\bar X^K_u)I_{[0,\t)}(u)+(t-\t)I_{[\t,t]}(u)] ,g(\bar X^K_s,\tilde X^K_{s - \tau}) dW_s \> \right|^{p/2} \\
	&+ \left(\sum_{n=0}^{n_t -1}|g(\bar X^K_{t_n},\tilde X^K_{t_n-\tau})\Delta W_n|^2 \right)^{p/2} +|g(\bar X^K_t,\tilde X^K_{t-\tau})(W_t - W_{\underline{t}})|^p \Bigg\} .
\end{align*}
Taking the expectation of the supremum, one has 
$$	\E \left[ \sup_{0 \leq s \leq t} | X_s^K|^p \right] \leq 6^{p/2-1} (I_1 + I_2 + I_3 + I_4),$$
where
\begin{align*}
I_1 &:=\E| X^K_0|^p + \E \left[ \left(2  \alpha \int_0^t (|\bar X^K_s|^2 + |\tilde X^K_{s-\tau}|^2) ds \right)^{p/2} \right] +  (2 \beta t)^{p/2}; \\
I_2 &:=\E \left[\sup_{0 \leq s \leq t}   \left|2 \int_0^s \<\bar X^K_u+f(\bar X^K_u,\tilde X^K_{u - \tau})[h(\bar X^K_u)I_{[0,\s)}(u)+(s-\s)I_{[\s,s]}(u)] ,g(\bar X^K_u,\tilde X^K_{u - \tau}) dW_u \> \right|^{p/2} \right]; \\
I_3 &:= \E \left[ \left(\sum_{n=0}^{n_t -1}|g(\bar X^K_{t_n},\tilde X^K_{t_n-\tau})\Delta W_n|^2 \right)^{p/2} \right]; \\
I_4 &:= \E \left[ \sup_{0 \leq s \leq t} |g(\bar X^K_s,\tilde X^K_{s-\tau})(W_s - W_{\underline{s}})|^p \right].
\end{align*}
Now we will establish bounds for each of the four terms above. In the remainder of the proof, $C$ is positive constants, independent of $K,$ that may change from line to line.

Using H\"{o}lder's inequality, we have
\begin{align*}
I_1 &\leq \E| X^K_0|^p + (2  \alpha)^{p/2}  T^{p/2-1}2^{p/2-1}  \int_0^t \E [|\bar X^K_s|^p + |\tilde X^K_{s-\tau}|^p]  ds  +  (2 \beta T)^{p/2} \\
&\leq C \int_0^t \E \left[\sup_{0 \leq u \leq s} |X^K_u|^p \right]ds  + C.  
\end{align*}
By the Burkholder-Davis-Gundy (BDG) inequality we obtain
$$I_2 \leq 2^{p/2}C \E \left[\left( \int_0^t |(\bar X^K_u+f(\bar X^K_u,\tilde X^K_{u - \tau})[h(\bar X^K_u)I_{[0,\t)}(u)+(t-\t)I_{[\t,t]}(u)]) g(\bar X^K_u,\tilde X^K_{u - \tau})|^2 du \right)^{p/4} \right]$$
An application of the H\"{o}lder inequality yields that
\begin{equation} \label{I2 - finite time - eq1}
	I_2 \leq 2^{\ff p 2}T^{\ff p 4-1}C \E \left[ \int_0^t \left|\bar X^K_u+f(\bar X^K_u,\tilde X^K_{u - \tau})[h(\bar X^K_u)I_{[0,\t)}(u)+(t-\t)I_{[\t,t]}(u)]\right|^{\ff p 2} ||g(\bar X^K_u,\tilde X^K_{u - \tau})||^{\ff p 2} du  \right]
\end{equation}
Now, we bound the integrand of the integral above. Using condition \eqref{h1} we obtain
\begin{align*}
	|\bar X^K_u &+f(\bar X^K_u,\tilde X^K_{u - \tau})[h(\bar X^K_u)I_{[0,\t)}(u)+(t-\t)I_{[\t,t]}(u)]|^2=  \\
	&= |\bar X^K_u|^2 + 2 [h(\bar X^K_u)I_{[0,\t)}(u)+(t-\t)I_{[\t,t]}(u)] \Big[ \<\bar X^K_u, f(\bar X^K_u,\tilde X^K_{u - \tau})\>    \\
	&+ \ff 1 2 [h(\bar X^K_u)I_{[0,\t)}(u)+(t-\t)I_{[\t,t]}(u)] |f(\bar X^K_u,\tilde X^K_{u - \tau})|^2 \Big]  \\
	&\leq |\bar X^K_u|^2 + 2 [h(\bar X^K_u)I_{[0,\t)}(u)+(t-\t)I_{[\t,t]}(u)] \left[ \alpha \left(|\bar X^K_u|^2 + |\tilde X^K_{u - \tau}|^2 \right) + \beta  \right] \\
	&= (1+ 2 \alpha T)|\bar X^K_u|^2 + 2 \alpha T |\tilde X^K_{u-\tau}|^2 + 2\beta T.
\end{align*}
This implies
\begin{align*}
|\bar X^K_u &+f(\bar X^K_u,\tilde X^K_{u - \tau})[h(\bar X^K_u)I_{[0,\t)}(u)+(t-\t)I_{[\t,t]}(u)]|^{p/2}  \\
&\leq 3^{p/4-1}\left[ (1+ 2 \alpha T)^{p/4}|\bar X^K_u|^{p/2} + (2 \alpha T)^{p/4} |\tilde X^K_{u-\tau}|^{p/2} + (2\beta T)^{p/4}  \right]  \\
	&\leq C\left( |\bar X^K_u|^{p/2} +  |\tilde X^K_{u-\tau}|^{p/2} + 1  \right).
\end{align*}
Also by condition \eqref{g LG}  one can see that
\begin{align*}
	||g(\bar X^K_u,\tilde X^K_{u - \tau})||^{p/2} &= \left( ||g(\bar X^K_u,\tilde X^K_{u - \tau})||^2 \right)^{p/4} \leq \left[ \alpha \left(|\bar X^K_u|^2 + |\tilde X^K_{u - \tau}|^2 \right) + \beta \right]^{p/4} \\
	&\leq C \left( |\bar X^K_u|^{p/2} + |\tilde X^K_{u - \tau}|^{p/2} + 1 \right).
\end{align*}
Substituting the last two inequalities into \eqref{I2 - finite time - eq1}, we obtain
\begin{align*}
	I_2 &  \leq  C \E \left[  \int_0^t \left(1+ |\bar X^K_u|^p + |\tilde X^K_{u - \tau}|^p \right) du \right] \\
	&\leq  C + C  \left(  \int_0^t \E \left[ \sup_{0 \leq u \leq s} |X^K_u|^p \right] ds \right).
\end{align*}
Now we will bound $I_3.$ Note that $t_n$ is a stopping time of the filtration $\{\F^W_t\}.$ Define 
$$\F_{t_n}:=\{A \in \F : A \cap \{t_n \leq t \} \in \F^W_t\}.$$ 
By the strong Markov property of the Brownian motion, $\{B_u := W_{t_n+u} - W_{t_n}, u \geq 0\}$ is a standard Brownian motion independent of $\F_{t_n}$ (page 86, Theorem 6.16 in \cite{Karatzas}). Thus
$$\E[\sup_{0 \leq u \leq s}|W_{t_n+u} - W_{t_n}|^p | \F_{t_n}] = \E[\sup_{0 \leq u \leq s}|B_u|^p]\leq C s^{p/2}.$$ 
%By the Burkholder-Davis-Gundy inequality, there exists a positive constant $C$ such that 
%$$\E[\sup_{0 \leq u \leq s}|B_u|^p] \leq C s^{p/2}.$$
This implies
\begin{equation} \label{I3}
	\E[\sup_{t_n \leq u \leq t_{n+1}}|W_u -W_{t_n}|^p | \F_{t_n}]  \leq C h_n^{p/2} .  
\end{equation}	  

Combining Jensen's inequality and equation \eqref{I3}, we arrive at
\begin{align*}
	I_3 &\leq \E \left[ \left(\sum_{n=0}^{n_t -1} || g(\bar X^K_{t_n},\tilde X^K_{t_n-\tau})||^2 |\Delta W_n|^2 \right)^{p/2} \right] = \E \left[ \left(\sum_{n=0}^{n_t -1} h_n || g(\bar X^K_{t_n},\tilde X^K_{t_n-\tau})||^2 \ff{|\Delta W_n|^2}{h_n} \right)^{p/2} \right] \\
	&\leq T^{p/2-1} \E \left[ \sum_{n=0}^{n_t -1} h_n || g(\bar X^K_{t_n},\tilde X^K_{t_n-\tau})||^p \ff{E[|\Delta W_n|^p | \F_{t_n}]}{h_n^{p/2}} \right] \leq C T^{p/2-1} \E \left[ \sum_{n=0}^{n_t -1} h_n || g(\bar X^K_{t_n},\tilde X^K_{t_n-\tau})||^p  \right] \\
	&\leq C T^{p/2-1} \E \left[ \int_0^{\underline{t}}|| g(\bar X^K_s,\tilde X^K_{s- \tau})||^p ds \right] \leq C T^{p/2-1} \E \left[ \int_0^t || g(\bar X^K_s,\tilde X^K_{s- \tau})||^p ds \right].
\end{align*}
Using condition \eqref{g LG} and H\"{o}lder's inequality, we have
\begin{align*}
	I_3 &\leq C T^{p/2-1} \E \left[ \int_0^t \left( || g(\bar X^K_s,\tilde X^K_{s- \tau})||^2 \right)^{p/2} ds \right] \leq C T^{p/2-1} \E \left[ \int_0^t \left(  \alpha(|\bar X^K_s|^2 + |\tilde X^K_{s- \tau}|^2) + \beta \right)^{p/2} ds \right] \\
	&\leq T^{p/2-1}2^{p-2}C\E \left[ \int_0^t \left(  \alpha^{p/2}(|\bar X^K_s|^p + |\bar X^K_{s- \tau}|^p) + \beta^{p/2} \right) ds \right] \\
	&\leq  C + C    \int_0^t \E \left[ \sup_{0 \leq u \leq s} |X^K_u|^p \right] ds.  
\end{align*}
For $I_4,$ using the linear condition \eqref{g LG}, we obtain
\begin{align*}
	I_4 &\leq \E \left[ \sup_{0 \leq s \leq t} |g(\bar X^K_s,\tilde X^K_{s - \tau})(W_s - W_{\underline{s}})|^p \right] \leq  \E \left[ \sup_{0 \leq s \leq t} \left\{ [ (\alpha(|\bar X^K_s|^p + |\tilde X^K_{s - \tau})|^p) + \beta] \ |(W_s - W_{\underline{s}})|^p \right\} \right] \\
	&\leq  \E \Bigg[  \sum_{n=0}^{n_t -1} [\alpha(|\bar X^K_{t_n}|^p + |\tilde X^K_{t_n-\tau}|^p) + \beta]  \E \left[ \sup_{t_n \leq s \leq t_{n+1}}  |(W_s - W_{t_n})|^{p/2} | \F_{t_n} \right]  \\
	&+ [\alpha(|\bar X^K_t|^p + |\tilde X^K_{t-\tau}|^p) + \beta] \E \left[ \sup_{\underline{t} \leq s \leq t} |(W_s - W_{\underline t})|^{p/2} | \F_{\underline{t}} \right] \Bigg]  \\
	&\leq  C + C \int_0^t E \left[\sup_{0 \leq u \leq s} | X^K_u|^p \right] ds.
\end{align*}
Adding all the bounds for $I_1$ to $I_4,$ we have that for all $t \in [0,T]$
$$	\E \left[ \sup_{0 \leq s \leq t} | X_s^K|^p \right] \leq  C + C \int_0^t E \left[\sup_{0 \leq u \leq s} | X^K_u|^p \right]  $$ 
and by the Gronwall inequality we obtain
$$\E \left[ \sup_{0 \leq t \leq T} | X_t^K|^p \right] \leq C.$$	
\end{proof}

\begin{rem}
	Note that assuming that $T$ was attainable, we  have proved the boundedness of the $p$th moments without using the auxiliary scheme. The only reason why we needed to work with a bounded scheme was to show that $\inf_{|x| \leq K} h^\delta(x)$ is strictly positive and therefore $T$ is attainable. 
\end{rem}

\begin{proof} [Proof of Theorem \ref{thm - boundedness pth moments}]
By Lemma \ref{lem - aux boundedness of the pth moments} and the Markov inequality
$$\P(\sup_{\time} |X_t| < K) = 1- \P(\sup_{\time} |X^K_t| \geq K) \geq 1 - \frac{\E[\sup_{\time} |X^K_t|^4}{K^4} = 1 - \frac{C}{K^4}. $$
Thus 
$$ \lim_{K \rightarrow \infty} \P(\sup_{\time} |X_t| < K) = 1,$$
This means that $\sup_{\time} |X_t| < \infty$ a.s., i.e. for almost all $\omega \in \Omega$ there exist a $K_{\omega}$ such that 
\begin{equation} \label{final proof I}
	\sup_{\time} |X_t(\omega)| \leq K_{\omega}.
\end{equation}
Since $h^\delta$ is continuous and strictly positive, $\inf_{|x| \leq K_{\omega}} h^\delta(x) >0.$ This implies that for almost every $\omega \in \Omega$
$$ \liminf_{n \rightarrow \infty} h_n^\delta(\omega) = \liminf_{n \rightarrow \infty} h^\delta(X_{t_n}(\omega))  \neq 0,$$
so $\lim_{n \rightarrow \infty} t_n(\omega) = \sum_{n=0}^{\infty} h_n^\delta(\omega)= \infty$ a.s. and $T$ is attainable. Also,
for all $\omega$ and all $0 < K_1 \leq K_2,$ we have
\begin{equation}\label{final proof II}
	\sup_{\time} |X^{K_1}_t(\omega)|= \min(\sup_{\time} |X_t(\omega)|,K_1) \leq \min(\sup_{\time} |X_t(\omega)|,K_2) = \sup_{\time} |X^{K_2}_t(\omega).|
\end{equation}
Equations \eqref{final proof I} and \eqref{final proof II} imply that
\begin{equation}\label{final proof III}
	\lim_{K \rightarrow \infty} \sup_{\time}|X^K_t|=\sup_{\time}|X_t| \ \text{ a.s. }
\end{equation}
This together with Lemma \ref{lem - aux boundedness of the pth moments}, yields
$$\E \left[ \sup_{0 \leq t \leq T} | X_t|^p \right]  = \lim_{K \rightarrow \infty} \E \left[ \sup_{0 \leq t \leq T} | X_t^K|^p \right] \leq C.$$
The proof is complete for $p \geq 4$. For $0 \leq p < 4$,  The required assertion follows from the H\"{o}lder inequality.\end{proof}

\subsubsection{Strong convergence of the numerical solutions}
In order to prove the strong convergence of the approximate solution \eqref{cont-time approx sol} to the exact solution of the SDDE \eqref{SDE}, we need the following lemma and corollary.
\begin{lem}\label{distance-aux-process}
	Let the SDDE \eqref{SDE} and the function $h^\delta$ satisfy Assumption \ref{assu f and g} and \ref{assu h} respectively. Assume also that the function $f$ satisfies the (global) linear growth condition, i.e. there exist a constant $C_1 \geq 0$ such that for all $x,y \in \R^m,$  
	\begin{equation} \label{f LG}
	|f(x,y)|^2 \leq C_1(|x|^2+|y|^2+1).
	\end{equation}
	Then there exists  a positive constant $C$ such that for all $t \in [0,T].$
	\begin{align}
	\E|X_t-\bar X_t|^2 &\leq C \delta T, \label{aux1} \\ 
	\E|X_t-\tilde X_t|^2 &\leq C \delta T.	\label{aux2}
	\end{align}		
\end{lem}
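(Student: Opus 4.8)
The plan is to write the increment $X_t-\bar X_t$ (and later $X_t-\tilde X_t$) as an increment of the continuous process \eqref{cont-time approx sol} over a subinterval of length at most $O(\delta T)$, and to estimate the two resulting pieces using the uniform second moment bound from Theorem \ref{thm - boundedness pth moments}. Observe first that Theorem \ref{thm - boundedness pth moments} together with $\E\|\xi\|^2<\infty$ gives $\E\big[\sup_{-\tau\le s\le T}|X_s|^2\big]\le C$, and hence, since $\bar X_s$ and $\tilde X_{s-\tau}$ take their values among $\{X_{t_n}\}$ and the values of $\xi$, also $\E|\bar X_s|^2\le C$ and $\E|\tilde X_{s-\tau}|^2\le C$ uniformly in $s\in[0,T]$. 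Fix $t\in[0,T]$ and set $\underline{t}:=\max\{t_n:t_n\le t\}$. By \eqref{cont-time approx sol} and the identity $\bar X_t=X_{\underline{t}}$,
$$X_t-\bar X_t=\int_{\underline{t}}^{t}f(\bar X_s,\tilde X_{s-\tau})\,ds+\int_{\underline{t}}^{t}g(\bar X_s,\tilde X_{s-\tau})\,dW_s,\qquad 0\le t-\underline{t}\le\delta T,$$
the last bound being \eqref{h2}. For the Lebesgue integral I would use Cauchy--Schwarz, $\big|\int_{\underline{t}}^{t}f\,ds\big|^2\le(t-\underline{t})\int_{\underline{t}}^{t}|f(\bar X_s,\tilde X_{s-\tau})|^2\,ds$, then the global linear growth \eqref{f LG}, the moment bounds above and $t-\underline{t}\le\delta T$, to get $\E\big|\int_{\underline{t}}^{t}f\,ds\big|^2\le C(\delta T)^2\le C\delta T$ (using $\delta<1$).

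For the stochastic integral the endpoint $\underline{t}$ is not a stopping time, so I would decompose over the interval containing $t$. On $\{t_n\le t<t_{n+1}\}$ both $\bar X_s=\bar X_{t_n}$ and $\tilde X_{s-\tau}=\bar X_{t_n-\tau}$ are constant in $s$ and $\F_{t_n}$-measurable, hence
$$\E\Big|\int_{\underline{t}}^{t}g(\bar X_s,\tilde X_{s-\tau})\,dW_s\Big|^2=\sum_{n\ge0}\E\Big[\mathbf{1}_{\{t_n\le t<t_{n+1}\}}\,\big|g(\bar X_{t_n},\bar X_{t_n-\tau})(W_t-W_{t_n})\big|^2\Big].$$
Conditioning the $n$-th term on $\F_{t_n}$ and using that $\{W_{t_n+u}-W_{t_n}\}_{u\ge0}$ is a Brownian motion independent of $\F_{t_n}$ (exactly as in the treatment of $I_3$ in the proof of Lemma \ref{lem - aux boundedness of the pth moments}), it equals $\E\big[\mathbf{1}_{\{t_n\le t<t_{n+1}\}}\|g(\bar X_{t_n},\bar X_{t_n-\tau})\|^2(t-t_n)\big]\le\delta T\,\E\big[\mathbf{1}_{\{t_n\le t<t_{n+1}\}}\|g(\bar X_{t_n},\bar X_{t_n-\tau})\|^2\big]$. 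Since the events $\{t_n\le t<t_{n+1}\}$ are pairwise disjoint, summing gives $\delta T\,\E\|g(\bar X_{\underline{t}},\bar X_{\underline{t}-\tau})\|^2$, which by \eqref{g LG} and the moment bounds is $\le C\delta T$. Combining the two estimates yields \eqref{aux1}.

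For \eqref{aux2} I would note that, by the definition of $\tilde X$, for $t\in[0,T]$ we have $\tilde X_t=\bar X_{t_n-\tau}$ where $n$ is the index with $t_n\le t+\tau<t_{n+1}$, and consequently $t_n-\tau\in(t-\delta T,t]$. When $t_n-\tau\ge0$ this means $\tilde X_t=X_{t_m}$ for a grid point $t_m$ with $0\le t-t_m<2\delta T$, so $X_t-\tilde X_t=\int_{t_m}^{t}f(\bar X_s,\tilde X_{s-\tau})\,ds+\int_{t_m}^{t}g(\bar X_s,\tilde X_{s-\tau})\,dW_s$ and the two estimates above apply verbatim (with $2\delta T$ in place of $\delta T$), giving $\E|X_t-\tilde X_t|^2\le C\delta T$. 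The only remaining case, $t_n-\tau<0$, forces $t<\delta T$, and then $X_t-\tilde X_t=\big(\xi(0)-\xi(t_n-\tau)\big)+\int_0^{t}f(\bar X_s,\tilde X_{s-\tau})\,ds+\int_0^{t}g(\bar X_s,\tilde X_{s-\tau})\,dW_s$ with $t_n-\tau\in(-\delta T,0)$; the two integrals contribute $O(\delta T)$ in $L^2$ as before, and the boundary term $\xi(0)-\xi(t_n-\tau)$ is controlled through the uniform continuity of $\xi$ on $[-\tau,0]$ and $\E\|\xi\|^2<\infty$.

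The part I expect to be most delicate is organisational rather than analytic: because neither $\underline{t}$ nor the delayed mesh point $t_n-\tau$ is a stopping time, the It\^o-isometry step has to be routed through the per-interval decomposition and the strong Markov property, and one must keep careful track of the interplay between the meshes $\{t_n\}$ and $\{t_n-\tau\}$ (including the boundary effects near $t=0$ coming from the initial data $\xi$).
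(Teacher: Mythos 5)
Your proposal follows essentially the same route as the paper: both arguments write $X_t-\bar X_t$ (and the gap between $\bar X$ and $\tilde X$) as a drift integral plus a stochastic integral over an interval of random length at most $\delta T$ (resp.\ $2\delta T$), and then invoke the linear growth conditions together with the moment bound of Theorem \ref{thm - boundedness pth moments}. You are in fact more careful than the paper on two points: you route the It\^{o}-isometry step through the per-interval decomposition because $\underline{t}$ is not a stopping time (the paper applies the isometry over $[t_r,t]$ without comment), and you notice the boundary case $t_n-\tau<0$ near $t=0$, which the paper's proof of \eqref{aux3} silently excludes. The one weak spot is your patch for that boundary term: uniform continuity of $\xi$ on $[-\tau,0]$ together with $\E\|\xi\|^2<\infty$ only yields $o(1)$ as $\delta\to 0$, not the claimed rate $C\delta T$; to recover the stated bound for small $t$ one needs an $L^2$-modulus condition on the initial data such as \eqref{initial condition}, which the paper only imposes later in Assumption \ref{assu to prove order of convergence}.
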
	
\begin{proof}
	Let $t \in [0,T]$. Let $r$ be such that $t_r \leq t \leq t_{r+1}.$ Then by definition we have $X_{t_r} = \bar X_{t_r} =\bar X_t.$ Thus
	$$X_t = \bar X_t + \int_{t_r}^t f(\bar X_s, \tilde X_s)ds + \int_{t_r}^t g(\bar X_s, \tilde X_s)dW_s.$$
	This together with \eqref{f LG},\eqref{g LG}, Assumption \ref{assu h} and Theorem \ref{thm - boundedness pth moments} imply that
	\begin{align*}
	\E|X_t-\bar X_t|^2 &\leq 2\E\left|\int_{t_r}^t f(\bar X_s, \tilde X_s)ds\right|^2 + 2\E\left|\int_{t_r}^t g(\bar X_s, \tilde X_s)dW_s\right|^2 \\
	&\leq 2\E [C_1 (h_r^\delta)^2 (1+2\sup_{t_r \leq s \leq t}|X_s|^2+||\xi||)]+2\E[\alpha h_r^\delta(2\sup_{t_r \leq s \leq t}|X_s|^2 +||\xi||)+\beta] \\
	&\leq 4 (\delta T)^2 (1+\E[\sup_{t_r \leq s \leq t}|X_s|^2]+\E||\xi||)+4\alpha \delta T (\E[\sup_{t_r \leq s \leq t}|X_s|^2] +\E||\xi||)+\beta] \\
	&\leq C \delta T.
	\end{align*}
	To prove assertion \eqref{aux2}, we first prove that there is a constant $C$ such that for all $t \in [0,T]$
	\begin{equation}\label{aux3}
	\E|\tilde X_t-\bar X_t|^2 \leq C \delta T.
	\end{equation}
	Let $t \in [0,T].$ Let $k$ and $n$ be such that $t_k \leq t < t_{k+1}$ and $t_n -\tau \leq t \leq t_{n+1}-\tau$ respectively. Let $r,0\leq r\leq k$ be such that $t_{k-r} \leq t_n-\tau \leq t_{k-r+1}$. From \eqref{dis-scheme} and the definitions of the step processes $\bar X$ and $\tilde X,$ one can see that
	\begin{align*}
	\hat X_{t_k}&=\hat X_{t_{k-r}}+\sum_{i=0}^{r-1}[f(\bar X_{t_{k-r+i}},\bar X_{t_{k-r+i}-\tau})h_{k-r+i} + g(\bar X_{t_{k-r+i}},\bar X_{t_{k-r+i}-\tau})\Delta W_{k-r+i}] \\
	&=\hat X_{t_{k-r}}+\sum_{i=0}^{r-1} \int^{t_{k-r+i+1}}_{t_{k-r+i}}f(\bar X_s,\tilde X_{s-\tau})ds +\sum_{i=0}^{r-1} \int^{t_{k-r+i+1}}_{t_{k-r+i}} g(\bar X_s,\tilde X_{s-\tau})dW_s \\
	&=\hat X_{t_{k-r}}+ \int^{t_k}_{t_{k-r}}f(\bar X_s,\tilde X_{s-\tau})ds + \int^{t_k}_{t_{k-r}} g(\bar X_s,\tilde X_{s-\tau})dW_s.
	\end{align*}
	Note that $\bar X_t = \hat X_{t_k}$ and $\hat X_{t_{k-r}}=\bar X_{t_{k-r}} = \bar X_{t_n - \tau}=\tilde X_{t_n- \tau}=\tilde X_t,$ we have that
	$$\bar X_t = \tilde X_t + \int^{t_k}_{t_{k-r}}f(\bar X_s,\tilde X_{s-\tau})ds + \int^{t_k}_{t_{k-r}} g(\bar X_s,\tilde X_{s-\tau})dW_s.$$
	Also, we have that
	$$t_k - t_{k-r} \leq (t_{n+1}-\tau)-(t_n-\tau)+h^\delta_{k-r}=h^\delta_n+h^\delta_{k-r} \leq 2 \delta T.$$ 
	Therefore, by \eqref{f LG},\eqref{g LG}, Assumption \ref{assu h} and Theorem \ref{thm - boundedness pth moments} we have that
	\begin{align*}
	\E|\bar X_t &- \tilde X_t|^2 \leq 2\E \left|\int_{t_{k-r}}^{t_k} f(\bar X_s,\tilde X_{s-\tau})ds\right|^2 + 2\E \left|\int_{t_{k-r}}^{t_k} g(\bar X_s,\tilde X_{s-\tau})dW_s\right|^2  \\
	&\leq 2\E [C_1 (t_k - t_{k-r})^2 (1+2\sup_{t_k \leq s \leq t}|X_s|^2+||\xi||)]+2\E[\alpha (t_k - t_{k-r})(2\sup_{t_k \leq s \leq t}|X_s|^2 +||\xi||)+\beta] \\
	&\leq 4 (\delta T)^2 (1+\E[\sup_{t_k \leq s \leq t}|X_s|^2]+\E||\xi||)+4\alpha \delta T (\E[\sup_{t_k \leq s \leq t}|X_s|^2] +\E||\xi||)+\beta] \\
	&\leq C \delta T.
	\end{align*}
	This together with \eqref{aux1} imply that
	$$\E|X_t - \tilde X_t|^2 = \E|X_t - \bar X_t|^2 + \E|\bar X_t -\tilde X_t|^2 \leq C \delta T.$$
	\end{proof} 
In our attempt to prove the strong convergence using the local Lipschitz condition instead of the global one, we introduce the stopping times 
$$\tau_m:= \inf\{t \geq 0:|Y_t| \geq m \}, \quad \sigma_m:= \inf\{t \geq 0:|X_t| \geq m \}$$ 
and $\upsilon_m:= \tau_m \wedge \sigma_m$. As usual we set $\inf \emptyset = \infty$. In the next corollary, we relax the global linear condition imposed to $f$ in the previous lemma and use instead the local Lipschitz condition.
\begin{cor}\label{cor-distance-aux-process}
	Let the SDDE \eqref{SDE} and the function $h^\delta$ satisfy Assumption \ref{assu f and g} and \ref{assu h} respectively. Then there exists a positive constant $C_m$ such that for all $t \in [0,T].$
	\begin{align}
	\E|X_{t \wedge \upsilon_m}-\bar X_{t \wedge \upsilon_m}|^2 &\leq C_m \delta T, \\ 
	\E|X_{t \wedge \upsilon_m - \tau}-\tilde X_{t \wedge \upsilon_m -\tau}|^2 &\leq C_m \delta T.	
	\end{align}		
\end{cor}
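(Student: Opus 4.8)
The strategy is to run the computation in the proof of Lemma \ref{distance-aux-process} with every occurrence of the time $t$ replaced by the stopped time $t\wedge\upsilon_m$, and with the global linear growth hypothesis \eqref{f LG} on $f$ replaced by a local substitute valid on the ball of radius $m$. The point that makes this work is a uniform boundedness observation: on the stochastic interval $[0,\upsilon_m)$ the processes appearing in \eqref{cont-time approx sol} all lie in the ball of radius $m\vee\|\xi\|$. Indeed, for $s<\upsilon_m\le\sigma_m$ we have $|X_s|<m$; since $\bar X_s=X_{\underline s}$ with $\underline s\le s<\upsilon_m$, and $\tilde X_{s-\tau}$ is $\bar X$ (hence $X$, or a value of $\xi$) evaluated at a grid point not exceeding $s-\tau<\upsilon_m$, the same bound $m$ (resp.\ $m\vee\|\xi\|$ when the delayed argument is negative) passes to $\bar X_s$ and to $\tilde X_{s-\tau}$. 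Therefore, for $s<\upsilon_m$, the local Lipschitz condition \eqref{LL} on this ball, together with the fixed value $|f(0,0)|$, provides a constant $\tilde C_m$ (independent of $\delta$, depending on $m$, on the local Lipschitz constant, and on the moments of $\|\xi\|$) with $|f(\bar X_s,\tilde X_{s-\tau})|^2\le \tilde C_m(1+|\bar X_s|^2+|\tilde X_{s-\tau}|^2)$, which plays exactly the role of \eqref{f LG} in Lemma \ref{distance-aux-process}, with $C_1$ replaced by $\tilde C_m$.

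Granting this, the first inequality is obtained as in the lemma. Setting $t_r:=\underline{t\wedge\upsilon_m}$, so $\bar X_{t\wedge\upsilon_m}=X_{t_r}$, equation \eqref{cont-time approx sol} gives
$$X_{t\wedge\upsilon_m}-\bar X_{t\wedge\upsilon_m}=\int_{t_r}^{t\wedge\upsilon_m}f(\bar X_s,\tilde X_{s-\tau})\,ds+\int_{t_r}^{t\wedge\upsilon_m}g(\bar X_s,\tilde X_{s-\tau})\,dW_s,$$
with $t\wedge\upsilon_m-t_r\le h^\delta_r\le\delta T$ (the stochastic integral being interpreted with a stopped upper limit, justified by the same localisation as in the proof of Lemma \ref{lem - aux boundedness of the pth moments}). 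Bounding the drift integral by the Cauchy--Schwarz inequality and the estimate above, and the stochastic integral by the It\^o isometry together with \eqref{g LG}, one obtains $\E|X_{t\wedge\upsilon_m}-\bar X_{t\wedge\upsilon_m}|^2\le C_m((\delta T)^2+\delta T)\le C_m\delta T$, the constant depending on $m,T,\alpha,\beta,\tilde C_m$ and the moments of $\|\xi\|$ but not on $\delta$. For the second inequality I would reproduce the decomposition used in the lemma. Since $\tau>0$, the time $t\wedge\upsilon_m-\tau$ lies strictly below $\upsilon_m$; writing $X_\cdot-\tilde X_\cdot=(X_\cdot-\bar X_\cdot)+(\bar X_\cdot-\tilde X_\cdot)$ at this time, the first bracket is controlled by the estimate just proved (and is identically $0$ when $t\wedge\upsilon_m-\tau<0$, since there both $X$ and $\bar X$ equal $\xi$), while the second bracket is treated as in the proof of \eqref{aux3}: $\bar X_{t\wedge\upsilon_m-\tau}-\tilde X_{t\wedge\upsilon_m-\tau}$ is a telescoped increment of the scheme \eqref{dis-scheme} over a time span of length at most $2\delta T$, so the same drift and diffusion estimates (again with $\tilde C_m$ in place of $C_1$) give $\E|\bar X_{t\wedge\upsilon_m-\tau}-\tilde X_{t\wedge\upsilon_m-\tau}|^2\le C_m\delta T$. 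Adding the two contributions yields the claim.

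The step needing the most care --- and the main obstacle --- is the first one: confirming that $\bar X$, $\tilde X$ and the delayed evaluations $\tilde X_{\cdot-\tau}$ stay in the ball of radius $m\vee\|\xi\|$ on the whole stopped interval, not only at the discretisation points. This is essentially bookkeeping over the random mesh $\{t_n\}$, checking that the grid point from which each step value is inherited always precedes $\upsilon_m$; the borderline cases --- $\upsilon_m$ or $t\wedge\upsilon_m-\tau$ coinciding with a grid point, or the delayed argument going negative so that the step processes reduce to values of $\xi$ lying within $\delta T$ of each other --- are handled exactly as in the passage from $\bar X$ to $\tilde X$ in the proof of Lemma \ref{distance-aux-process}. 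Once this boundedness is secured, the rest is the arithmetic of that lemma with $C_1$ replaced by the $m$-dependent constant $\tilde C_m$.
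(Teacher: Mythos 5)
Your proposal is correct and follows essentially the same route as the paper: the paper's proof simply observes that the stopped processes are bounded by $m$, so the local Lipschitz condition \eqref{LL} yields the linear growth bound \eqref{f LG} on the relevant ball, and then invokes Lemma \ref{distance-aux-process}. Your write-up just carries out that reduction in more detail (tracking the grid points, the stopped integrals, and the $m$-dependent constant explicitly), which is careful but not a different argument.
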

\begin{proof}
	The processes $X_{t \wedge \upsilon_m}, \bar X_{t \wedge \upsilon_m}$ and $\tilde X_{t \wedge \upsilon_m}$ are bounded by $m$. Thus, the local Lipschitz condition \eqref{LL} implies condition \eqref{f LG}. Therefore the corollary follows directly from Lemma \ref{distance-aux-process}.
\end{proof}

\begin{thm}
	If the SDDE \eqref{SDE} and the function $h^\delta$ satisfy Assumption \ref{assu f and g} and \ref{assu h} respectively, then for all $p>0$
$$	\lim_{\delta \rightarrow 0}  \E \left[ \sup_{\time} |X_t - Y_t|^p \right]=0.$$
\end{thm}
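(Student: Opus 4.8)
The plan is the classical Mao-type localization argument, combining the moment bounds (the lemma on the $p$th moments of the exact solution of \eqref{SDE} and Theorem \ref{thm - boundedness pth moments} for the numerical solution) with Corollary \ref{cor-distance-aux-process} and Gronwall's inequality. It suffices to treat $p\ge 2$: for $0<p<2$ the assertion follows from the case $p=2$ by Jensen's inequality, since $\E[\sup_{\time}|X_t-Y_t|^p]\le(\E[\sup_{\time}|X_t-Y_t|^2])^{p/2}$. So fix $p\ge 2$, write $e_t:=X_t-Y_t$ (hence $e_t=0$ on $[-\tau,0]$), and recall the stopping times $\tau_m,\sigma_m$ and $\upsilon_m=\tau_m\wedge\sigma_m$ introduced before Corollary \ref{cor-distance-aux-process}. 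Subtracting \eqref{SDE} from \eqref{cont-time approx sol} and stopping at $\upsilon_m$ gives
$$ e_{t\wedge\upsilon_m}=\int_0^{t\wedge\upsilon_m}\!\big[f(\bar X_s,\tilde X_{s-\tau})-f(Y_s,Y_{s-\tau})\big]ds+\int_0^{t\wedge\upsilon_m}\!\big[g(\bar X_s,\tilde X_{s-\tau})-g(Y_s,Y_{s-\tau})\big]dW_s. $$

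First I would split, for any integer $m>\|\xi\|$,
$$ \E\Big[\sup_{\time}|e_t|^p\Big]=\E\Big[\sup_{\time}|e_t|^p\mathbf{1}_{\{\upsilon_m>T\}}\Big]+\E\Big[\sup_{\time}|e_t|^p\mathbf{1}_{\{\upsilon_m\le T\}}\Big]\le\E\Big[\sup_{\time}|e_{t\wedge\upsilon_m}|^p\Big]+\E\Big[\sup_{\time}|e_t|^p\mathbf{1}_{\{\upsilon_m\le T\}}\Big], $$
using that on $\{\upsilon_m>T\}$ one has $t\wedge\upsilon_m=t$ for all $t\le T$. For the second term, set $\bar p:=p+1\ge 3$; the moment bounds give $\E[\sup_{\time}|e_t|^{\bar p}]\le C$, while Markov's inequality gives $\P(\upsilon_m\le T)\le\P(\tau_m\le T)+\P(\sigma_m\le T)\le C/m^{\bar p}$. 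Young's inequality with exponents $\bar p/p$ and $\bar p/(\bar p-p)$ then yields, for every $\epsilon>0$,
$$ \E\Big[\sup_{\time}|e_t|^p\mathbf{1}_{\{\upsilon_m\le T\}}\Big]\le\frac{p\,\epsilon}{\bar p}\E\Big[\sup_{\time}|e_t|^{\bar p}\Big]+\frac{\bar p-p}{\bar p\,\epsilon^{p/(\bar p-p)}}\P(\upsilon_m\le T)\le C\epsilon+\frac{C_\epsilon}{m^{\bar p}}. $$

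The core step is the bound on $\E[\sup_{0\le u\le t}|e_{u\wedge\upsilon_m}|^p]$. Applying $|a+b|^p\le 2^{p-1}(|a|^p+|b|^p)$, Hölder's inequality to the drift integral, and the Burkholder--Davis--Gundy and Hölder inequalities to the stochastic integral,
$$ \E\Big[\sup_{0\le u\le t}|e_{u\wedge\upsilon_m}|^p\Big]\le C_p(T)\,\E\int_0^{t\wedge\upsilon_m}\!\Big(\big|f(\bar X_s,\tilde X_{s-\tau})-f(Y_s,Y_{s-\tau})\big|^p+\big\|g(\bar X_s,\tilde X_{s-\tau})-g(Y_s,Y_{s-\tau})\big\|^p\Big)ds. $$
On the integration range $s<\upsilon_m$ all of $\bar X_s,\tilde X_{s-\tau},X_s,Y_s,Y_{s-\tau}$ have norm at most $m$ --- here one uses that $\bar X_s$ equals $X$ at a grid point $\le s$, that $\tilde X_{s-\tau}$ equals $X$ at a grid point $\le s-\tau$, and that $m>\|\xi\|$ --- so \eqref{LL} applies with constant $C_m$. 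Writing $|\bar X_s-Y_s|\le|\bar X_s-X_s|+|e_s|$ and $|\tilde X_{s-\tau}-Y_{s-\tau}|\le|\tilde X_{s-\tau}-X_{s-\tau}|+|e_{s-\tau}|$, bounding $|e_{s-\tau}|\le\sup_{0\le u\le s}|e_{u\wedge\upsilon_m}|$, and using the $m$-boundedness to pass from second to $p$th moments in Corollary \ref{cor-distance-aux-process}, one arrives at
$$ \E\Big[\sup_{0\le u\le t}|e_{u\wedge\upsilon_m}|^p\Big]\le C_m\,\delta T+C_m\int_0^t\E\Big[\sup_{0\le u\le s}|e_{u\wedge\upsilon_m}|^p\Big]ds, $$
whence Gronwall's inequality gives $\E[\sup_{0\le t\le T}|e_{t\wedge\upsilon_m}|^p]\le\bar C_m\,\delta T$.

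Collecting the three estimates, $\E[\sup_{\time}|e_t|^p]\le\bar C_m\,\delta T+C\epsilon+C_\epsilon/m^{\bar p}$. Given $\vartheta>0$, I would first pick $\epsilon$ so that $C\epsilon<\vartheta/3$, then $m$ so that $C_\epsilon/m^{\bar p}<\vartheta/3$, and finally --- with $\epsilon$ and $m$ (hence $\bar C_m$) now fixed --- choose $\delta$ small enough that $\bar C_m\delta T<\vartheta/3$, so that $\limsup_{\delta\to 0}\E[\sup_{\time}|e_t|^p]\le\vartheta$; letting $\vartheta\to 0$ finishes the proof. I expect the main obstacle to be the core step: one must carefully verify that the step processes $\bar X,\tilde X$ and the delayed arguments remain bounded by $m$ on $\{s<\upsilon_m\}$ so that the local Lipschitz constant $C_m$ is legitimate, and one must respect the order of the limits --- $\epsilon$ and $m$ have to be frozen before $\delta\to 0$, because $\bar C_m\to\infty$ as $m\to\infty$.
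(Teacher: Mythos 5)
Your proposal is correct and follows essentially the same route as the paper's proof: the same localization via $\upsilon_m=\tau_m\wedge\sigma_m$, the same use of Corollary \ref{cor-distance-aux-process} plus the local Lipschitz condition and Gronwall for the stopped error, the same Young/Markov treatment of the tail event $\{\upsilon_m\le T\}$ (which the paper delegates to \cite{Higham}), and the same order of limits ($\epsilon$, then $m$, then $\delta$). The only cosmetic difference is that you run the Gronwall argument directly at the $p$th-moment level with BDG for general $p\ge 2$ and reduce $p<2$ by Jensen, whereas the paper carries out the estimate for the second moment and uses higher moments only in the tail term.
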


\begin{proof}
	One can see that
	\begin{align}\label{eq1-convergence}
	\E [\sup_{0 \leq t \leq T}|Y_t - X_t|^2] &=\E [\sup_{0 \leq t \leq T}|Y_t - X_t|^2 I_{\{\tau_m>T \text{ and } \sigma_m>T \}}]+ \E [\sup_{0 \leq t \leq T}|Y_t - X_t|^2 I_{\{\tau_m \leq T \text{ or } \sigma_m \leq T \}}], \nonumber \\
	&=: R_1 + R_2,
	\end{align}
	where $I_A$ es the indicator function of the set $A.$ In order to bound $R_1$, we combine the definitions of the continuous-time approximation \eqref{cont-time approx sol} and the exact solution \eqref{SDE} to obtain 
	\begin{align*}
	|Y_{t \wedge \upsilon_m} &- X_{t \wedge \upsilon_m}|^2  \\
	&= \left| \int_0^{t \wedge \upsilon_m}[f(Y_s,Y_{s- \tau})-f(\hat X_s, \tilde X_{s-\tau})]ds + \int_0^{t \wedge \upsilon_m}[g(Y_s,Y_{s- \tau})-g(\hat X_s, \tilde X_{s-\tau})]dW_s \right|^2 \\
	&\leq 2T \int_0^{t \wedge \upsilon_m}|f(Y_s,Y_{s- \tau})-f(\hat X_s, \tilde X_{s-\tau})|^2ds + 2 \left|\int_0^{t \wedge \upsilon_m}[g(Y_s,Y_{s- \tau})-g(\hat X_s, \tilde X_{s-\tau})]dW_s \right|^2
	\end{align*}
	Thus, for any $t_1 \leq T,$ 
	\begin{align*}
	\E &[\sup_{0 \leq t \leq t_1}|Y_{t \wedge \upsilon_m} - X_{t \wedge \upsilon_m}|^2] \\
	&\leq 2T\E\left[\int_0^{t \wedge \upsilon_m}|f(Y_s,Y_{s - \tau})-f(\hat X_s, \tilde X_{s-\tau})|^2ds\right]+8\E\left[\int_0^{t \wedge \upsilon_m}|g(Y_s,Y_{s - \tau})-g(\hat X_s, \tilde X_{s-\tau})|^2ds\right],
	\end{align*}
	where we have used the Doob martingale inequality in the second summand. Using the local Lipschitz condition \eqref{LL} in the RHS of the previous equation and then, adding and subtracting $X_t$ twice yields
	\begin{align*}
	\E &[\sup_{0 \leq t \leq t_1}|Y_{t \wedge \upsilon_m} - X_{t \wedge \upsilon_m}|^2] \\
	&\leq  C_m \left( \int_0^{t_1}\E|Y_{s \wedge \upsilon_m}-X_{s \wedge \upsilon_m}|^2 ds + \int_0^{t_1}\E|Y_{s \wedge \upsilon_m -\tau}-X_{s \wedge \upsilon_m -\tau}|^2 ds \right) \\
	&+  C_m \left( \int_0^{t_1}\E|X_{s \wedge \upsilon_m}-\bar X_{s \wedge \upsilon_m}|^2 ds + \int_0^{t_1}\E|X_{s \wedge \upsilon_m -\tau}-\tilde X_{s \wedge \upsilon_m -\tau}|^2 ds \right), 
	\end{align*}
	where $C_m$ is a positive constant that depends on $T$ and $m$.
	By Corollary \ref{cor-distance-aux-process}, we obtain
	\begin{align*}
	\E &[\sup_{0 \leq t \leq t_1}|Y_{t \wedge \upsilon_m} - X_{t \wedge \upsilon_m}|^2] \\
	&\leq  C_m \left( \int_0^{t_1}\E|Y_{s \wedge \upsilon_m}-X_{s \wedge \upsilon_m}|^2 ds + \int_0^{t_1}\E|Y_{s \wedge \upsilon_m -\tau}-X_{s \wedge \upsilon_m -\tau}|^2 ds \right)+C_m \delta.  \\ 
	\end{align*}
	The Gronwall inequality yields 
	$$R_1 = \E [\sup_{0 \leq t \leq T}|Y_{t \wedge \upsilon_m} - X_{t \wedge \upsilon_m}|^2] \leq C_m \delta.$$ 	
	Proceeding in exactly the same way as in \cite{Higham}, one can see that for all $\alpha,\beta,\eta, \mu > 0$ we have
	$$R_2 \leq \ff{2^{p+1} \eta  C}{p} + \ff{2(p-2) C}{p \eta^{2/(p-2)}m^p}$$
	where $\bar C$ is a positive constant. Substituting the estimates of $R_1$ and $R_2$ into \eqref{eq1-convergence}, we obtain
	$$\E [\sup_{0 \leq t \leq T}|Y_t - X_t|^2] \leq C_m \delta + \ff{2^{p+1} \eta  C}{p} + \ff{2(p-2) C}{p \eta^{2/(p-2)}m^p}.$$
	Now, given any $\epsilon >0$, we can find an $\eta$ sufficiently small so
	$$\ff{2^{p+1} \eta  C}{p} < \ff \epsilon 3,$$
	and then $m$ large enough so
	$$\ff{2(p-2) C}{p \eta^{2/(p-2)}m^p} < \ff \epsilon 3,$$
	and finally $\delta$ small enough such that 
	$$\delta C_m < \ff \epsilon 3.$$
	The proof is complete. 		
\end{proof}
\color{black}
\iffalse
\color{blue}
\begin{com}
	The next theorem is about the order of convergence. It requires more assumptions and it does not make any use of the adaptive method. Under the required assumptions the proof would be identical for the standard (constant time-step) EM method.    
\end{com}
\color{black}
\fi

\subsection{Order of convergence}
Now we investigate the order of convergence of the adaptive EM numerical solutions.

\begin{assu} \label{assu to prove order of convergence}
There exists a constant $L>0$ such that for all $x,y,\bar x, \bar y \in \R^m,$ $f$ satisfies the one-sided Lipschitz condition
\begin{equation}\label{f one-sided GL}
2\<x - \bar x, f(x,y)-f(\bar x, \bar y) \> \leq  L(|x-\bar x|^2 +|y - \bar y|^2)
\end{equation} 
and $g$ satisfies the (global) Lipschitz condition 
\begin{equation} \label{g GL}
||g(x,y) - g(\bar x, \bar y)||^2 \leq L (|x- \bar x|^2 +|y - \bar y |^2).
\end{equation} 
In addition $f$ satisfies the polynomial growth Lipschitz condition: there exist constants $\gamma, \lambda, q > 0$ such that for all $x,y,\bar x, \bar y \in \R^m$
\begin{equation} \label{f Pol growth cond}
|f(x,y) - f(\bar x, \bar y)| \leq (\gamma(|x|^q + |y|^q + |\bar x|^q + |\bar y|^q) + \lambda) (|x- \bar x|+|y - \bar y|).
\end{equation}	
Furthermore, for any $s,t \in [-\tau,0]$ and $q>0,$ there exists a positive constant $\Lambda$ such that 
\begin{equation} \label{initial condition}
	\E||\xi(t) - \xi(s)|| \leq \Lambda|t-s|^q.
\end{equation}
\end{assu}

\begin{thm} \label{theo - order of convergence}
If the SDDE \eqref{SDE} satisfies Assumption \ref{assu to prove order of convergence} and the time-step function $h$ satisfies  Assumption \ref{assu h}, then for all $p > 0,$ there exists a positive constant $C$ independent of $\delta$ such that
$$\E \left[ \sup_{\time}|X_t - Y_t|^p \right] \leq C \delta ^{p/2} .   $$
\end{thm}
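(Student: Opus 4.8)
The plan is to run an It\^o--Gronwall estimate directly on the $p$th moment of the error process $e_t:=X_t-Y_t$, which vanishes identically on $[-\tau,0]$. It suffices to treat $p\ge 2$: for $0<p<2$ the conclusion then follows from Jensen's inequality, and since a bound for a large exponent downgrades (again by Jensen) to any smaller one, I may also assume $p$ as large as needed. Note first that \eqref{f Pol growth cond} with $\bar x=\bar y=0$ forces polynomial growth on $f$ and \eqref{g GL} forces linear growth on $g$, so Assumption \ref{assu f and g} holds; hence Theorem \ref{thm - boundedness pth moments} (and the corresponding bound for $Y$) gives finite moments of every order for $\sup_{\time}|X_t|$ and $\sup_{\time}|Y_t|$.

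\textbf{Step 1 (sharp step--gap bounds).} I would first record an $L^{q'}$, polynomial--growth strengthening of Lemma \ref{distance-aux-process}: for every $q'\ge 2$ there is $C>0$ with $\E|X_s-\bar X_s|^{q'}\le C\delta^{q'/2}$ and $\E|X_{s-\tau}-\tilde X_{s-\tau}|^{q'}\le C\delta^{q'/2}$ for all $s\in[0,T]$. Indeed, on $[\t,s]$ the step processes entering \eqref{cont-time approx sol} are constant, so $X_s-\bar X_s=f(X_{\t},\tilde X_{\t-\tau})(s-\t)+g(X_{\t},\tilde X_{\t-\tau})(W_s-W_{\t})$ with $s-\t\le\delta T$; taking $q'$th moments and using the polynomial/linear growth of $f,g$, the uniform moment bounds, and $\E[|W_s-W_{\t}|^{q'}\mid\F_{\t}]\le C(s-\t)^{q'/2}$ (cf.\ \eqref{I3}) gives the estimate for $X_s-\bar X_s$. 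For $X_{s-\tau}-\tilde X_{s-\tau}$ the same argument runs on the shifted grid when $s-\tau\ge 0$, while for $s-\tau\in[-\tau,0]$ one has $X_{s-\tau}-\tilde X_{s-\tau}=\xi(s-\tau)-\xi(\text{grid point})$, controlled through \eqref{initial condition}.

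\textbf{Step 2 (It\^o's formula and coefficient estimates).} Applying It\^o's formula to $|e_t|^p$ yields, for $t\le T$,
\[
|e_{t}|^p = p\!\int_0^{t}\! |e_s|^{p-2}\langle e_s,\, f(\bar X_s,\tilde X_{s-\tau})-f(Y_s,Y_{s-\tau})\rangle\,ds + M_{t} + \tfrac{p(p-1)}{2}\!\int_0^{t}\! |e_s|^{p-2}\,\|g(\bar X_s,\tilde X_{s-\tau})-g(Y_s,Y_{s-\tau})\|^2\,ds,
\]
where $M_t$ is the stochastic integral and $|\sigma^\top e|^2\le\|\sigma\|^2|e|^2$ was used to merge the two second--order terms. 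In the drift I would insert $\pm f(X_s,X_{s-\tau})$: the diagonal piece obeys $2\langle e_s, f(X_s,X_{s-\tau})-f(Y_s,Y_{s-\tau})\rangle\le L(|e_s|^2+|e_{s-\tau}|^2)$ by \eqref{f one-sided GL}, while the remaining piece is at most $|e_s|^{p-1}|f(\bar X_s,\tilde X_{s-\tau})-f(X_s,X_{s-\tau})|$; estimating its second factor by \eqref{f Pol growth cond}, applying Young's inequality with exponents $\tfrac{p}{p-1},p$, and separating the polynomial prefactor from the gap by Cauchy--Schwarz, its expectation is at most $\varepsilon\,\E|e_s|^p+C_\varepsilon\big(\E(|X_s-\bar X_s|+|X_{s-\tau}-\tilde X_{s-\tau}|)^{2p}\big)^{1/2}$, the prefactor being uniformly bounded in $L^{2p}$ by the moment estimates; by Step 1 this is $\le\varepsilon\,\E|e_s|^p+C_\varepsilon\delta^{p/2}$. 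For the diffusion term, \eqref{g GL} gives $\|g(\bar X_s,\tilde X_{s-\tau})-g(Y_s,Y_{s-\tau})\|^2\le 2L(|e_s|^2+|e_{s-\tau}|^2+|X_s-\bar X_s|^2+|X_{s-\tau}-\tilde X_{s-\tau}|^2)$; multiplying by $|e_s|^{p-2}$ and using Young's inequality with exponents $\tfrac{p}{p-2},\tfrac{p}{2}$ converts each summand into a multiple of $|e_s|^p$, $|e_{s-\tau}|^p$, $|X_s-\bar X_s|^p$ or $|X_{s-\tau}-\tilde X_{s-\tau}|^p$.

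\textbf{Step 3 (BDG, delay reduction, Gronwall).} Taking $\sup_{0\le t\le t_1}$ and applying the BDG inequality to $M$, I would dominate $\big(\int_0^{t_1}|e_s|^{2p-2}\|g(\bar X_s,\tilde X_{s-\tau})-g(Y_s,Y_{s-\tau})\|^2 ds\big)^{1/2}$ by $\tfrac12\sup_{0\le t\le t_1}|e_t|^p$ (absorbed on the left after taking expectations) plus a constant times $\int_0^{t_1}|e_s|^{p-2}\|g(\bar X_s,\tilde X_{s-\tau})-g(Y_s,Y_{s-\tau})\|^2 ds$, handled exactly as in Step 2. Since $e_u=0$ for $u\le 0$, $\int_0^{t_1}\E|e_{s-\tau}|^p\,ds\le\int_0^{t_1}\E[\sup_{0\le u\le s}|e_u|^p]\,ds$. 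Assembling all the bounds,
\[
\E\Big[\sup_{0\le t\le t_1}|e_t|^p\Big]\le C\int_0^{t_1}\E\Big[\sup_{0\le u\le s}|e_u|^p\Big]\,ds + C\delta^{p/2},\qquad t_1\in[0,T],
\]
and Gronwall's inequality gives $\E[\sup_{\time}|X_t-Y_t|^p]\le C\delta^{p/2}$; the case $0<p<2$ follows by Jensen. The main obstacle is Step 1 together with the polynomial term in Step 2: the rate $\delta^{p/2}$ is obtained only because the step--gaps can be controlled in \emph{every} $L^{q'}$, which relies on the uniform moment bounds of Theorem \ref{thm - boundedness pth moments} surviving the polynomial growth of $f$, and on the somewhat fiddly identification of $\tilde X_{s-\tau}$ with a value of $X$ (or of $\xi$) over an interval of length $O(\delta)$ preceding $s-\tau$.
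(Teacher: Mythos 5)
Your proposal is correct and follows essentially the same route as the paper: the same insertion of $\pm f(X_s,X_{s-\tau})$ with the one-sided Lipschitz condition \eqref{f one-sided GL} for the diagonal piece and the polynomial growth condition \eqref{f Pol growth cond} plus the $L^{2p}$ step-gap bounds $\E|X_s-\bar X_s|^{2p}+\E|X_{s-\tau}-\tilde X_{s-\tau}|^{2p}\le C\delta^{p}$ for the remainder, followed by BDG and Gronwall. The only (immaterial) difference is that you apply It\^o's formula directly to $|e_t|^p$, whereas the paper applies it to $|e_t|^2$ and then passes to the power $p/2$ by H\"older's inequality; your explicit treatment of the segment $s-\tau\in[-\tau,0]$ via \eqref{initial condition} is a welcome clarification of a point the paper leaves implicit.
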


\begin{proof}
	The proof is similar to that of SDEs given in \cite{Giles}. We only give the proof for $p \geq 4$; the result for $0 \leq p <4$ follows from H\"{o}lder's inequality.
	Define $e_t:=Y_t - X_t, 0 \leq t  \leq T.$ Hence
	$$e_t = \int_0^t [f(Y_s, Y_{s-\tau})-f(\bar X_s, \tilde X_{s-\tau})]ds + \int_0^t [g(Y_s, Y_{s-\tau})-g(\bar X_s, \tilde X_{s-\tau})]dW_s.$$
	Applying It\^{o}'s formula we obtain
	\begin{align} \label{applying ito to error in order of convergence theo}
		|e_t|^2 &\leq 2 \int_0^t \<e_s,f(Y_s, Y_{s-\tau})-f(\bar X_s, \tilde X_{s-\tau})\> ds + \int_0^t |g(Y_s, Y_{s-\tau})-g(\bar X_s, \tilde X_{s-\tau})|^2ds \nonumber \\
		&+ 2 \int_0^t \<e_s, (g(Y_s, Y_{s-\tau})-g(\bar X_s, \tilde X_{s-\tau}))dW_s \> \nonumber \\
		&\leq 2 \int_0^t \<e_s,f(Y_s, Y_{s-\tau})-f(X_s, X_{s-\tau})\> ds + 2 \int_0^t \<e_s,f(X_s, X_{s-\tau})-f(\bar X_s, \tilde X_{s-\tau})\> ds \nonumber  \\
		&+  \int_0^t |g(Y_s, Y_{s-\tau})-g(\bar X_s, \tilde X_{s-\tau})|^2ds + 2 \int_0^t \<e_s, (g(Y_s, Y_{s-\tau})-g(\bar X_s, \tilde X_{s-\tau}))dW_s \>.
	\end{align}
Using condition \eqref{f one-sided GL} we get
\begin{equation} \label{c1 in order of convergence theo}
	2\<e_s, f(Y_s, Y_{s-\tau})-f(X_s, X_{s-\tau})\> \leq L(|Y_s-X_s|^2+ |Y_{s-\tau}- X_{s-\tau}|^2)= L(|e_s|^2+ |e_{s-\tau}|^2).
\end{equation} 
Condition \eqref{f Pol growth cond} implies that
\begin{align} \label{c2 in order of convergence theo}
	|\<e_s,f(X_s, X_{s-\tau})&-f(\bar X_s, \tilde X_{s-\tau})\>| \leq |e_s| \ |f(X_s, X_{s-\tau})-f(\bar X_s, \tilde X_{s-\tau})| \nonumber\\
	&\leq |e_s| Q(X_s, X_{s-\tau}, \bar X_s, \tilde X_{s-\tau})(|X_s - \bar X_s| + |X_{s - \tau} - \tilde X_{s-\tau}| ) \nonumber \\
	&\leq \ff 1 2 |e_s|^2 + \ff 1 2 Q(X_s, X_{s-\tau}, \bar X_s, \tilde X_{s-\tau})^2 \ 2(|X_s - \bar X_s|^2 + |X_{s - \tau} - \tilde X_{s - \tau}|^2 ), 
\end{align} 
where $Q(x, y, \bar x, \bar y):= \gamma(|x|^q + |y|^q + |\bar x|^q + |\bar y|^q) + \lambda.$
In addition, condition \eqref{g GL} implies that
\begin{align} \label{c3 in order of convergence theo}
	||g(Y_s, Y_{s-\tau})-g(\bar X_s, \tilde X_{s-\tau})||^2 &\leq L (|Y_s - \bar X_s|^2 + |Y_{s-\tau} - \tilde X_{s-\tau}|^2) \nonumber \\
	&= L (|Y_s - X_s+X_s - \bar X_s|^2 + |Y_{s-\tau} - X_{s-\tau} + X_{s-\tau} - \tilde X_{s-\tau}|^2) \nonumber \\
	&\leq  2 L(|e_s|^2 + |e_{s-\tau}|^2 + |X_s - \bar X_s|^2 +|X_{s-\tau} - \tilde X_{s-\tau}|^2 ).
\end{align}
Substituting \eqref{c1 in order of convergence theo}, \eqref{c2 in order of convergence theo} and \eqref{c3 in order of convergence theo} in \eqref{applying ito to error in order of convergence theo}, we have
	\begin{align*}
		|e_t|^2 &\leq  \int_0^t \big[(3L+1) |e_s|^2 + 3L |e_{s - \tau}|^2\big]ds \\
		 &+ 2 \int_0^t [Q(X_s, X_{s-\tau}, \bar X_s, \tilde X_{s-\tau})^2 + L] (|X_s - \bar X_s|^2 + |X_{s - \tau} - \tilde X_{s-\tau}|^2 ) ds \\
		 &+ 2 \int_0^t \<e_s, (g(Y_s, Y_{s-\tau})-g(\bar X_s, \tilde X_{s-\tau}))dW_s \>.
	\end{align*}
Using H\"{o}lder's inequality yields
\begin{align*}
	|e_t|^p &\leq (6T)^{p/2 -1} \int_0^t((3L+1)^{p/2}|e_s|^p +(2L)^{p/2}|e_{s-\tau}|^p)ds \\
	&+ (3T)^{p/2-1} 2^{p/2} \int_0^t [Q(X_s, X_{s-\tau}, \bar X_s, \tilde X_{s-\tau})+L]^{p/2}(|X_s-\bar X_s|^p +|X_{s-\tau} - \tilde X_{s-\tau}|^p)ds \\
	&+ 3^{p/2-1} 2^{p/2} \left| \int_0^t \<e_s, (g(Y_s, Y_{s-\tau})-g(\bar X_s, \tilde X_{s-\tau}))dW_s \> \right|^{p/2}.
\end{align*}
In the remainder of the proof, $C$ is positive constant, independent of $\delta,$ that may change from line to line. \\
Taking the supremum on each side of the previous inequality and then the expectation yields
$$\E \left[\sup_{0 \leq s \leq t} |e_s|^p \right] \leq J_1 + J_2 + J_3,$$
where
\begin{align*}
	J_1 &:= C \int_0^t \E \left[\sup_{0 \leq u \leq s} |e_u|^p \right] ds; \\
	J_2 &:= C \int_0^t \E \left[ [Q(X_s, X_{s-\tau}, \bar X_s, \tilde X_{s-\tau})+L]^{p/2}(|X_s-\bar X_s|^p +|X_{s-\tau} - \tilde X_{s-\tau}|^p) \right] ds; \\
	J_3 &:= C \E \left[\sup_{0 \leq s \leq t} \left| \int_0^s \<e_s, (g(Y_u, Y_{u-\tau})-g(\bar X_u, \tilde X_{u-\tau}))dW_u \> \right|^{p/2} \right].
\end{align*}
For $J_2,$ by H\"{o}lder's inequality one has
\begin{equation} \label{I2 order}
	J_2 \leq C \int_0^t \left( \E \left[ [Q(X_s, X_{s-\tau}, \bar X_s, \tilde X_{s-\tau})+L]^{p} \right] \  \E \left[(|X_s-\bar X_s|^{2p} +|X_{s-\tau} - \tilde X_{s-\tau}|^{2p}) \right] \right)^{1/2} ds.
\end{equation} 
By Theorem \ref{thm - boundedness pth moments} there exists a constant $C$ such that
\begin{equation} \label {casa}
	\E \left[ [Q(X_s, X_{s-\tau}, \bar X_s, \tilde X_{s-\tau})+L]^{p} \right] \leq C.
\end{equation}
Let $\underline{s}:=\max\{t_n: t_n \leq s\}.$ From \eqref{cont-time approx sol}, we can write
$$X_s - \bar X_s = f(\bar X_{\underline s}, \tilde X_{\underline{s}-\tau})(s - \underline s) + g(\bar X_{\underline s}, \tilde X_{\underline{s}-\tau})(W_s - W_{\underline s}).$$
Thus, by H\"{o}lder inequality
\begin{align} \label{oso1}
	\E |X_s &- \bar X_s |^{2p} = \E|f(\bar X_{\underline s}, \tilde X_{\underline{s}-\tau})(s - \underline s) + g(\bar X_{\underline s}, \tilde X_{\underline{s}-\tau})(W_s - W_{\underline s})|^{2p} \nonumber\\
	&\leq 2^{2p-1} \E|f(\bar X_{\underline s}, \tilde X_{\underline{s}-\tau})(s - \underline s)|^{2p} + 2^{2p-1} \E|g(\bar X_{\underline s}, \tilde X_{\underline{s}-\tau})(W_s - W_{\underline s})|^{2p} \nonumber \\
	&\leq 2^{2p-1} (\E[f(\bar X_{\underline s}, \tilde X_{\underline{s}-\tau})]^{4p} \E[(s - \underline s)^{4p})^{1/2} + 2^{2p-1} (\E[g(\bar X_{\underline s}, \tilde X_{\underline{s}-\tau})]^{4p} \E[(W_s - W_{\underline s})^{4p}])^{1/2}.
\end{align}
By Assumption \ref{assu h} we have
\begin{equation}\label{oso2}
E[(s - \underline s)^{4p}] \leq \E[(h_{\underline{s}}^\delta)^{4p}] \leq (\delta T)^{4p} \leq \delta^{2p}T^{4p}
\end{equation}
and by condition \eqref{I3}, we get
\begin{equation}\label{oso3}
	\E[(W_s - W_{\underline s})^{4p}] \leq C(\delta T)^{2p}.
\end{equation}
Also it follows from the global Lipschitz condition \ref{g GL} that 
\begin{align}\label{oso4}
	||g(\bar X_{\underline s}, \tilde X_{\underline{s}-\tau})||^{4p} &\leq \ff 1 {2^{2p}} K^{2p} (|\bar X_{\underline s}|^2+ |\tilde X_{\underline{ s}-\tau}|^2)^{2p} +C \\
	&\leq C(|\bar X_{\underline s}|^{4p} + |\tilde X_{\underline{s}-\tau}|^{4p} +1) \nonumber
\end{align}
and from the polynomial growth condition that
\begin{align}\label{oso5}
	|f(\bar X_{\underline s}, \tilde X_{\underline{s}-\tau})|^{4p} &\leq \left[ (\gamma(|\bar X_{\underline s}|^q + |\tilde X_{\underline{s} -\tau})|^q) + \mu) (|\bar X_{\underline s}|+|\tilde X_{\underline{s}-\tau})|) + f(0,0) \right]^{4p} \\
	&\leq C( |\bar X_{\underline s}|^{4p(q+1)} + |\tilde X_{\underline{s}-\tau})|^{4p(q+1)} +1), \nonumber
\end{align}
so by Theorem \ref{thm - boundedness pth moments}, there exists a constant $C$  such that 
$$\E[|f(\bar X_{\underline s}, \tilde X_{\underline{s}-\tau})|^{4p}] \leq C  \mbox{ and } \E[|g(\bar X_{\underline s}, \tilde X_{\underline{s}-\tau})|^{4p}]  \leq  C.$$ 

Substituting these last two expressions together with \eqref{oso2} and \eqref{oso3} into \eqref{oso1}, we obtain 
\begin{equation}\label{oso6}
\E |X_s - \bar X_s |^{2p} \leq C \delta^p.
\end{equation}
Using \eqref{oso4} and \eqref{oso5}, and proceeding in exactly the same way as in Lemma \ref{distance-aux-process}, yields $\E |X_{s-\tau} - \tilde X_{s-\tau} |^{2p} \leq C \delta^p.$ Using this fact together with \eqref{oso6} and \eqref{casa} in  \eqref{I2 order}, we obtain that $J_2 \leq C \delta^{p/2}.$ 

Now we estimate $J_3.$ By the BDG and H\"{o}lder's inequalities one can see that
\begin{align*}
	J_3 &\leq C \E \left[ \left( \int_0^t |e_s|^2\  |(g(Y_s, Y_{s-\tau})-g(\bar X_s, \tilde X_{s-\tau}))|^2 ds  \right)^{p/4} \right] \\
	&\leq C \E\left[\int_0^t |e_s|^{p/2} (|\bar X_s - Y_s|^{p/2} + |\tilde X_{s-\tau} - Y_{s-\tau}|^{p/2})ds \right]  \\
	&\leq C \E\left[\int_0^t \ff 1 2 |e_s|^p + |\bar X_s - Y_s|^p + |\tilde X_{s-\tau} - Y_{s-\tau}|^p ds \right]   \\
	&\leq C \E\left[\int_0^t  |e_s|^p + (|\bar X_s - X_s|^p + |X_s - Y_s|^p + |\tilde X_{s-\tau} - X_{s-\tau}|^p + |X_{s - \tau} - Y_{s-\tau}|^p)ds \right] \\
	&\leq C \E\left[\int_0^t  |e_s|^p +  |e_{s-\tau}|^p + (|\bar X_s - X_s|^p +  |\tilde X_{s-\tau} - X_{s-\tau}|^p )ds \right]. 
\end{align*}
By the same argument we used with $J_2$ we know that
$$\E \left[(|\bar X_s - X_s|^p +  |\tilde X_{s-\tau} - X_{s-\tau}|^p ) \right] \leq C \delta^{p/2}.$$ 
Thus
$$J_3 \leq C \int_0^t \E \left[\sup_{0 \leq u \leq s} |e_u|^p  \right]ds +C \delta^{p/2}.$$ 
Collecting the bounds for $J_1, J_2$ and $J_3,$ we conclude that there exist  a constant $C$  such that
$$\E\ \left[\sup_{\time} |e_t|^p \right] \leq  C \int_0^t \E \left[\sup_{0 \leq u \leq s} |e_u|^p  \right]ds +  C \delta^{p/2}.$$ 
The required assertion follows from the Gronwall inequality.
\end{proof}

\section{Convergence of the numerical solutions on infinite time interval} \label{infinite}
In this section we will study the convergence of the numerical solutions on the time interval $[0, \infty).$ The assumptions will be stronger than the ones on the finite time interval. 

\begin{assu} \label{assu f and g - infinite time} The functions $f$ and $g$ satisfy the local Lipschitz condition: for every $R>0$ there exists a positive constant $C_R$ such that
	\begin{equation} \label{LL - infinite time}
		| f(x,y)-f(\bar x, \bar y)| +||g(x,y)-g(\bar x,\bar y)|| \leq C_R (|x- \bar x|+|y- \bar y|)
	\end{equation}
	for all $x,y,\bar x, \bar y \in \R^m$ with $|x|,|y|,|\bar x|, |\bar y| \leq R.$ Furthermore, there exists constants $\alpha_1 > \alpha_2 \geq 0$ and $\beta > 0,$ such that for all $x,y \in \R^m,$ $f$ satisfies the dissipative one-sided linear growth condition: 
	\begin{equation} \label{f one-sided LG - infinite time}
		\<x,f(x,y)\> \leq -\alpha_1|x|^2 + \alpha_2|y|^2+ \beta, 
	\end{equation}
	and $g$ is globally bounded: 
	\begin{equation} \label{g LG infinite time}
		||g(x,y)||^2 \leq \beta.
	\end{equation}	
\end{assu}

\begin{assu} \label{assu h - infinite time}
	For every $\delta,$ the time step function $h^\delta:\R^m \rightarrow \R^+,$ is continuous and uniformly bounded by $h^\delta_{max}$, where  $h^\delta_{max} \in (0,\infty).$ \\ 
	Furthermore, there exist constants $\alpha_1 > \alpha_2 \geq 0$ and $\beta > 0,$ such that for all $x,y \in \R^m.$
	\begin{equation}\label{h1 - infinite time}
		\<x,f(x,y)\> + \ff 1 2 h^\delta(x)|f(x,y)|^2 \leq -\alpha_1|x|^2 + \alpha_2|y|^2+ \beta.
	\end{equation}
\end{assu}

\subsection{The boundedness of the $p$th moments of the exact and the numerical solutions}

\subsubsection{Exact solution}

\begin{lem}
	If the SDDE \eqref{SDE} satisfies Assumption \ref{assu f and g - infinite time}, then there exists a positive constant $C$ such that for all $t \geq 0$
	\begin{equation}
		\E \left[ |Y_t|^p \right] \leq C.
	\end{equation} 
\end{lem}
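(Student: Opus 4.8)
The plan is to apply It\^o's formula to the weighted functional $e^{\epsilon t}|Y_t|^p$ for a sufficiently small $\epsilon>0$, and to exploit the \emph{strict} dissipativity $\alpha_1>\alpha_2$ to absorb the delayed term. I would first treat $p\ge 2$; the case $0<p<2$ then follows at once from Jensen's inequality. Throughout, the It\^o stochastic integral is made a genuine martingale by the standard localization through the stopping times $\theta_R:=\inf\{t\ge0:|Y_t|\ge R\}$, all estimates being carried out for $Y_{t\wedge\theta_R}$ and the limit $R\to\infty$ taken by Fatou's lemma at the very end; to lighten the displays I suppress this truncation below (it is harmless since the Khasminskii-type hypotheses guarantee $Y$ is well defined on $[0,\infty)$).

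With $V(x)=|x|^p$, using $|g^T(x,y)x|^2\le |x|^2\,\|g(x,y)\|^2$ together with the global bound \eqref{g LG infinite time} and the dissipative one-sided condition \eqref{f one-sided LG - infinite time}, It\^o's formula (after discarding the martingale term and taking expectations) gives
\[
\E\big[e^{\epsilon t}|Y_t|^p\big]\le \E|Y_0|^p+\E\int_0^t e^{\epsilon s}\Big[(\epsilon-p\alpha_1)|Y_s|^p+p\alpha_2|Y_s|^{p-2}|Y_{s-\tau}|^2+\tfrac{p(p-1)}{2}\beta\,|Y_s|^{p-2}\Big]ds .
\]
Young's inequality gives $p\alpha_2|Y_s|^{p-2}|Y_{s-\tau}|^2\le(p-2)\alpha_2|Y_s|^p+2\alpha_2|Y_{s-\tau}|^p$ and, for any $\eta>0$, $\tfrac{p(p-1)}{2}\beta|Y_s|^{p-2}\le\eta|Y_s|^p+C_\eta$. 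Writing $a:=p\alpha_1-(p-2)\alpha_2$ and $b:=2\alpha_2$ (so $a>0$ since $\alpha_1>\alpha_2\ge0$ and $p\ge2$), this yields
\[
\E\big[e^{\epsilon t}|Y_t|^p\big]\le \E|Y_0|^p+\int_0^t e^{\epsilon s}\big[(-a+\epsilon+\eta)\,\E|Y_s|^p+b\,\E|Y_{s-\tau}|^p+C_\eta\big]ds .
\]

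The key step is to dispose of the delayed integral. The substitution $u=s-\tau$ gives $\int_0^t e^{\epsilon s}\E|Y_{s-\tau}|^p\,ds\le e^{\epsilon\tau}\big(\tau\,\E\|\xi\|^p+\int_0^t e^{\epsilon u}\E|Y_u|^p\,du\big)$, so the coefficient of $\int_0^t e^{\epsilon s}\E|Y_s|^p\,ds$ becomes $-a+\epsilon+\eta+b\,e^{\epsilon\tau}$. Since $a-b=p(\alpha_1-\alpha_2)>0$, I would first fix $\eta\in(0,a-b)$ and then choose $\epsilon>0$ small enough that $-a+\epsilon+\eta+b\,e^{\epsilon\tau}\le0$; this is possible because the left-hand side tends to $-a+\eta+b<0$ as $\epsilon\downarrow 0$. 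Discarding this nonpositive term leaves $\E\big[e^{\epsilon t}|Y_t|^p\big]\le C_1+\tfrac{C_\eta}{\epsilon}e^{\epsilon t}$ with $C_1,C_\eta,\epsilon$ all independent of $t$; dividing by $e^{\epsilon t}$ gives $\E|Y_t|^p\le C_1 e^{-\epsilon t}+C_\eta/\epsilon\le C$ uniformly in $t\ge0$.

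The main obstacle is exactly this balancing of the delayed term $\alpha_2|Y_{s-\tau}|^2$ against the dissipative gain $-\alpha_1|Y_s|^2$: after the time-shift in the integral a strictly negative net coefficient must survive, which is precisely where the hypothesis $\alpha_1>\alpha_2$ (and not merely $\alpha_1\ge\alpha_2$) is used, and which also forces $\epsilon$ to be taken small because of the amplification factor $e^{\epsilon\tau}$. The remaining ingredients --- the stopping-time truncation justifying the martingale property, Young's inequality, and the change of variables $u=s-\tau$ --- are routine.
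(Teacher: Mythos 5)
Your argument is correct and is precisely the standard Lyapunov/exponential-weighting proof that the paper alludes to when it writes ``the proof is standard, we omit it here'': It\^o's formula applied to $e^{\epsilon t}|Y_t|^p$, Young's inequality to split the cross term, a time shift to absorb the delayed integral, and the strict gap $\alpha_1>\alpha_2$ to keep the net coefficient negative for small $\epsilon$. One bookkeeping slip: the one-sided condition \eqref{f one-sided LG - infinite time} also contributes a term $p\beta|Y_s|^{p-2}$ from its additive constant $\beta$, so the coefficient of $|Y_s|^{p-2}$ in your first display should be $\tfrac{p(p-1)}{2}\beta+p\beta$ rather than $\tfrac{p(p-1)}{2}\beta$; this term is of the same form as the one you kept and is absorbed by the same Young's inequality step, so nothing downstream changes.
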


\begin{proof}
The  proof is standard, we omit it here. 
\end{proof}

\subsubsection{Adaptive EM numerical solutions} 

The proof about attainability given for the finite time interval, is valid for the infinite time interval $[-\tau, \infty).$    

\begin{thm} \label{thm - boundedness pth moments - infinite time} 
	If the SDE \eqref{SDE} and the function $h^\delta$ satisfy Assumption \ref{assu f and g - infinite time} and \ref{assu h - infinite time} respectively, then for all $p>0$ there exists a constant $C$ dependent on $h_{max}, \beta, \alpha_1, \alpha_2$ and $p,$ but independent of $\delta$ and $t,$ such that for all $t \geq 0,$
	\begin{equation}
		\E \left[  |X_t|^p \right] \leq C.
	\end{equation} 
\end{thm}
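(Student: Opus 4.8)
The plan is to follow the architecture of §4 but to replace the Gronwall step by a dissipativity argument that exploits $\alpha_1>\alpha_2$, and to turn the resulting one‑step estimate into a uniform‑in‑time bound by a Halanay/exponential‑weighting argument. First the reductions. By Jensen's inequality it suffices to treat $p\ge 2$ (indeed $p\ge 4$, the remaining range following from Hölder's inequality exactly as in Theorem \ref{thm - boundedness pth moments}). Next, for $t\in[t_n,t_{n+1})$ one has $X_t=\hat X_{t_n}+f_n(t-t_n)+g_n(W_t-W_{t_n})$ with $f_n:=f(\bar X_{t_n},\tilde X_{t_n-\tau})$, $g_n:=g(\bar X_{t_n},\tilde X_{t_n-\tau})$; rearranging \eqref{h1 - infinite time} (and using $-\alpha_1|x|^2\le0$, $-\langle x,f\rangle\le|x||f|$) yields $h^\delta(x)|f(x,y)|\le 2|x|+C(|y|+1)$, so that $(t-t_n)|f_n|\le h_n|f_n|\le 2|\bar X_{t_n}|+C(|\tilde X_{t_n-\tau}|+1)$, while $\|g_n\|^2\le\beta$ by \eqref{g LG infinite time} and $\E[|W_t-W_{t_n}|^k\mid\F_{t_n}]\le Ch_n^{k/2}$ as in \eqref{I3}. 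This gives $\E[|X_t|^p\mid\F_{t_n}]\le C(|\hat X_{t_n}|^p+|\tilde X_{t_n-\tau}|^p+1)$ with $C$ depending only on $p,\alpha_1,\alpha_2,\beta,h_{\max}$ and not on $t$, so it is enough to bound $\E|\hat X_{t_n}|^p$ uniformly in $n$. As in §4 I would run the whole argument first on the truncated scheme \eqref{cont-time approx sol-K}, obtaining bounds independent of $K$, then pass to the limit $K\to\infty$ by Fatou's lemma (attainability is already in hand, as stated in the text).

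The second step is the one‑step dissipative inequality at the mesh points. Expanding $|\hat X_{t_{n+1}}|^2=|\hat X_{t_n}+f_nh_n+g_n\Delta W_n|^2$ and applying \eqref{h1 - infinite time} exactly as in the proof of Lemma \ref{lem - aux boundedness of the pth moments} gives, pointwise,
$$|\hat X_{t_{n+1}}|^2\le(1-2\alpha_1h_n)|\hat X_{t_n}|^2+2\alpha_2h_n|\bar X_{t_n-\tau}|^2+2\beta h_n+2\langle\hat X_{t_n}+f_nh_n,g_n\Delta W_n\rangle+|g_n\Delta W_n|^2.$$
For $p\ge2$ I raise this to the power $p/2$, Taylor‑expand in the $\F_{t_n}$‑measurable part, and take $\E[\,\cdot\mid\F_{t_n}]$; the first‑order noise contribution vanishes because $\E[\Delta W_n\mid\F_{t_n}]=0$, and $\|g_n\|^2\le\beta$ together with the Gaussian moment bounds converts each remaining noise term into one of order $h_n$. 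The outcome, valid once $h_{\max}$ is small enough (a smallness that is in any case forced by the compatibility of \eqref{h1 - infinite time} with a given $h^\delta$), is
$$\E\big[|\hat X_{t_{n+1}}|^p\,\big|\,\F_{t_n}\big]\le(1-\lambda_1h_n)|\hat X_{t_n}|^p+\lambda_2h_n|\bar X_{t_n-\tau}|^p+\lambda_3h_n,$$
with $0\le\lambda_2<\lambda_1$ (the gap inherited from $\alpha_1>\alpha_2$) and $\lambda_i$ depending only on $p,\alpha_1,\alpha_2,\beta,h_{\max}$.

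The third step extracts the uniform bound. Introducing the shift $R:=\lambda_3/(\lambda_1-\lambda_2)$ kills the constant term: with $u_n:=|\hat X_{t_n}|^p-R$ one gets $\E[u_{n+1}\mid\F_{t_n}]\le(1-\lambda_1h_n)u_n+\lambda_2h_n(|\bar X_{t_n-\tau}|^p-R)$, a pure contraction‑plus‑delay recursion. Equivalently, one multiplies the one‑step inequality by $e^{\gamma t_{n+1}}=e^{\gamma t_n}e^{\gamma h_n}$ (legitimate under $\E[\cdot\mid\F_{t_n}]$ since $h_n$ is $\F_{t_n}$‑measurable), takes full expectations, chooses $\gamma>0$ so small that $e^{\gamma h_n}(1-\lambda_1h_n)\le1-\lambda_1'h_n$ with $\lambda_1'>\lambda_2e^{\gamma(\tau+h_{\max})}$, and telescopes the resulting inequality for $W_n:=\E[e^{\gamma t_n}|\hat X_{t_n}|^p]$, rewriting the sums as integrals of the step processes $\bar X,\tilde X$. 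The delayed integral is then controlled by the non‑delayed one plus a term $\lesssim\E\|\xi\|^p$ by observing that $t_n-\tau$ lies in a mesh cell $[t_j,t_{j+1})$, so $\bar X_{t_n-\tau}=\hat X_{t_j}$ and $e^{\gamma t_n}\le e^{\gamma(\tau+h_{\max})}e^{\gamma t_j}$; the gap $\lambda_1'>\lambda_2e^{\gamma(\tau+h_{\max})}$ makes the two integrals cancel, and the leftover constant contributes only an $O(1)$ amount once divided by the weight. This gives $\E|\hat X_{t_n}|^p\le C$ uniformly in $n,\delta,K$, and combined with the first reduction and $K\to\infty$ it yields $\sup_{t\ge0}\E|X_t|^p\le C$.

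The hard part is the last step. Because the random mesh $\{t_n\}$ is not invariant under the $\tau$‑shift that the delay forces on it, the total step length $\sum_{n:\,t_n-\tau\in[t_j,t_{j+1})}h_n$ that the shifted mesh spends in a cell is only controlled up to an additive $h_{\max}$ rather than being comparable to $h_j$; one must therefore organise the exponentially‑weighted (or Halanay‑type) summation so that this discrepancy, and the constant term $\lambda_3h_n$, are absorbed by the gap $\lambda_1>\lambda_2$ rather than growing with the number of steps. Making the bookkeeping of the random adaptive mesh, the delay, and the exponential weight cooperate is where essentially all the difficulty lies; the SDE case $\tau=0$ and the finite‑horizon estimates of §4 are comparatively routine.
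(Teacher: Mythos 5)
There is a genuine gap, and you have located it yourself: the third step (the Halanay/exponential-weighting summation over the random adaptive mesh in the presence of the delay) is exactly where your argument must close, and it is left as "where essentially all the difficulty lies" rather than being carried out. Worse, as you have set it up the step does not merely require careful bookkeeping --- it fails. After reindexing the delayed sum $\sum_n e^{\gamma t_n}\lambda_2 h_n|\bar X_{t_n-\tau}|^p$ by the mesh cell $[t_j,t_{j+1})$ containing $t_n-\tau$, the total weight attached to $|\hat X_{t_j}|^p$ is $\sum_{n:\,t_n-\tau\in[t_j,t_{j+1})}h_n$, which is bounded only by $h_j+h_{\max}$; since the mesh is adaptive, the ratio $(h_j+h_{\max})/h_j$ is unbounded, so the delayed sum cannot be dominated by $(\lambda_2/\lambda_1)$ times the non-delayed sum $\sum_j h_j|\hat X_{t_j}|^p$, and the gap $\lambda_1>\lambda_2$ inherited from $\alpha_1>\alpha_2$ is not enough to absorb it. Under Assumptions \ref{assu f and g - infinite time} and \ref{assu h - infinite time} the delayed coefficient $\lambda_2 h_n=\lambda_2 h^\delta(\hat X_{t_n})$ carries no information about the mesh size at the delayed time, so there is no way to repair the reindexing within your mesh-point recursion. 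It is telling that in Section \ref{stab SDDEs} the paper is forced to build the correction factor $\min(h^\delta(y),h^\delta(x))/h^\delta(x)$ into condition \eqref{eq h - stability} precisely so that a recursion of your type closes; no such factor is available here.

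The paper's own proof avoids this obstruction by a cruder device. It multiplies the same one-step inequality by $e^{2\alpha_1 t_{n+1}}$, telescopes, rewrites the sums as Riemann and It\^o integrals of the step processes $\bar X,\tilde X$, and then bounds the delayed term by pulling out the running supremum: $\int_0^t e^{2\alpha_1 s}|\tilde X_{s-\tau}|^2\,ds\le \sup_{-\tau\le u\le t}|X_u|^2\int_0^t e^{2\alpha_1 s}\,ds$, which after raising to the power $p/2$ contributes $(\alpha_2/\alpha_1)^{p/2}\E[\sup_{0\le s\le t}|X_s|^p]e^{\alpha_1 pt}$ plus a constant. The martingale term is handled by the BDG inequality and Young's inequality with a small parameter $\gamma$, and the conclusion follows by absorbing $\big(\gamma/2+(\alpha_2/\alpha_1)^{p/2}\big)\E[\sup_{0\le s\le t}|X_s|^p]$ into the left-hand side, using $(\alpha_2/\alpha_1)^{p/2}<1$ and the a priori finiteness of this quantity from Theorem \ref{thm - boundedness pth moments}. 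Your first two steps (the reduction to mesh points and the one-step dissipative $p$th-moment inequality via conditional expectations) are sound and close in spirit to the paper's; to complete the argument you should replace the mesh-cell reindexing by this supremum-absorption, i.e.\ give up the pointwise-in-$j$ cancellation and instead exploit that the delayed history is dominated by $\|\xi\|^p+\sup_{0\le u\le t}|X_u|^p$ uniformly over the shifted mesh.
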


\begin{proof}
	The proof is given for $p \geq 4.$ For $0 < p <4,$ the result holds from H\"{o}lder's inequality. Fix $t$ and define $\underline{t}:=\max\{t_n: t_n \leq t\},$ \ $\underline{\hat t}:=\max\{t_n: t_n \leq t - \tau \}$  and $n_t:=\max\{n:t_n \leq t\}.$ Taking squared norms in \eqref{dis-scheme}, we have that for $n=0$ to $n = n_t,$ 
	\begin{align*} 
		|\hat X_{t_{n+1}}|^2 &= |\hat X_{t_n}|^2 + 2 h_n (\<\hat X_{t_n},f(\hat X_{t_n},\bar X_{t_n - \tau})\>+ \ff 1 2 h_n |f(\hat X_{t_n},\bar X_{t_n - \tau})|^2) \\
		&+2 \< \hat X_{t_n}+f(\hat X_{t_n},\bar X_{t_n - \tau})h_n,g(\hat X_{t_n},\bar X_{t_n - \tau})\Delta W_n \>+|g(\hat X_{t_n},\bar X_{t_n - \tau})\Delta W_n|^2. 	
	\end{align*}
Note that, since it is irrelevant in this proof, we have dropped the term ``$\delta$" in the adaptive time-step ``$h^\delta_n$" to ease the notation.Using conditions \eqref{h1 - infinite time} and \eqref{g LG infinite time}, we obtain
\begin{align*} 
	|\hat X_{t_{n+1}}|^2 &\leq |\hat X_{t_n}|^2 - 2 h_n \alpha_1|\hat X_{t_n}|^2 + 2 h_n \alpha_2 |\bar X_{t_n - \tau}|^2 +  2 h_n \beta  \\
	&+2  \<\hat X_{t_n}+f(\hat X_{t_n},\bar X_{t_n - \tau})h_n,g(\hat X_{t_n},\bar X_{t_n - \tau})\Delta W_n \>+ \beta|\Delta W_n|^2.	
\end{align*}
Multiplying both sides by $e^{2 \alpha_1 t_{n+1}}$ yields
\begin{align*} 
	e^{2 \alpha_1 t_{n+1}}&|\hat X_{t_{n+1}}|^2 \leq  e^{2 \alpha_1 t_{n+1}}|\hat X_{t_n}|^2 - 2 h_n \alpha_1 e^{2 \alpha_1 t_{n+1}}|\hat X_{t_n}|^2 + 2 h_n \alpha_2 e^{2 \alpha_1 t_{n+1}}|\bar X_{t_n - \tau}|^2   \\
	&+  2 h_n \beta e^{2 \alpha_1 t_{n+1}} +2 e^{2 \alpha_1 t_{n+1}}  \<\hat X_{t_n}+f(\hat X_{t_n},\bar X_{t_n - \tau})h_n,g(\hat X_{t_n},\bar X_{t_n - \tau})\Delta W_n \> + e^{2 \alpha_1 t_{n+1}}\beta|\Delta W_n|^2.	
\end{align*}
Now, taking into account that $t_{n+1} = t_n + h_n$ and using the fact that for all $x \in \R,$ $1 + x \leq e^x$ with $x = -2 h_n \alpha_1,$ we obtain
\begin{align*} 
	e^{2 \alpha_1 t_{n+1}}&|\hat X_{t_{n+1}}|^2 \leq  e^{2 \alpha_1 t_n}|\hat X_{t_n}|^2 + 2 h_n \alpha_2 e^{2 \alpha_1 t_{n+1}}|\bar X_{t_n - \tau}|^2  +  2 h_n \beta e^{2 \alpha_1 t_{n+1}} \\
	&+2 e^{2 \alpha_1 t_{n+1}}  \<\hat X_{t_n}+f(\hat X_{t_n},\bar X_{t_n - \tau})h_n,g(\hat X_{t_n},\bar X_{t_n - \tau})\Delta W_n \> + e^{2 \alpha_1 t_{n+1}}\beta|\Delta W_n|^2.	
\end{align*}
Solving the recurrence, we have
\begin{align} 
	e&^{2\alpha_1 \t}|\hat X_{\t}|^2 \leq |\hat X_0|^2 +   2 \alpha_2 \sum_{n=0}^{n_t-1} e^{2 \alpha_1 t_{n+1}}|\bar X_{t_n - \tau}|^2 h_n +  2 \beta \sum_{n=0}^{n_t-1}  e^{2 \alpha_1 t_{n+1}}h_n \nonumber \\
	&+2 \sum_{n=0}^{n_t-1} e^{2 \alpha_1t_{n+1}}  \< \hat X_{t_n}+f(\hat X_{t_n},\bar X_{t_n - \tau})h_n,g(\hat X_{t_n},\bar X_{t_n - \tau})\Delta W_n \> + \beta \sum_{n=0}^{n_t-1} e^{2 \alpha_1  t_{n+1}}|\Delta W_n|^2.	
\end{align}
Similarly for the partial time step from $\t$ to $t,$ we get
\begin{align} 
	e^{2 \alpha_1 t}&| X_t|^2 \leq  e^{2 \alpha_1 \t}|\hat X_{\t}|^2 + 2 (t-\t) \alpha_2 e^{2 \alpha_1 t}|\bar X_{\t -\tau}|^2  +  2 (t-\t) \beta e^{2 \alpha_1 t} \nonumber \\
	&+2 e^{2 \alpha_1 t}  \< \hat X_{\t}+f(\hat X_{\t},\bar X_{\t -\tau})h_n,g(\hat X_{\t},\bar X_{\t -\tau})(W_t - W_{\t}) \> + e^{2 \alpha_1 t}\beta|(W_t - W_{\t})|^2.	
\end{align}
Substituting the penultimate inequality into the last one, we obtain
\begin{align*} 
	e^{2\alpha_1 t}| X_t|^2 &\leq  |X_0|^2 +   2 \alpha_2 \sum_{n=0}^{n_t-1} e^{2 \alpha_1 t_{n+1}}|\bar X_{t_n - \tau}|^2 |h_n + 2 \alpha_2 e^{2 \alpha_1 t}|\bar X_{t_n - \tau}|^2  (t - \t) \\
	&+2 \beta \sum_{n=0}^{n_t-1}  e^{2 \alpha_1 t_{n+1}}h_n  +2\beta e^{2 \alpha_1 t}(t-\t) \\
	&+2 \sum_{n=0}^{n_t-1} e^{2 \alpha_1 t_{n+1}}  \< \hat X_{t_n}+f(\hat X_{t_n},\bar X_{t_n - \tau})h_n,g(\hat X_{t_n},\bar X_{t_n - \tau})\Delta W_n \> \\
	+&\beta \sum_{n=0}^{n_t-1} e^{2 \alpha_1 t_{n+1}}|\Delta W_n|^2 + e^{2 \alpha_1 t}\beta|(W_t - W_{\t})|^2 \nonumber \\
	&+2 e^{2 \alpha_1 t}  \< \hat X_{\t}+f(\hat X_{\t},\bar X_{\t -\tau})(t-\t),g(\hat X_{\t},\bar X_{\t -\tau})(W_t - W_{\t}) \> .	
\end{align*}
Since $t_{n+1} \leq t_n + h_{max}$ and $t \leq \t + h_{max},$ we can take the common factor $e^{2 \alpha_1 h_{max}}$ out in the equation above. The processes $\bar X$ and $\tilde X$, defined in \eqref{dis-scheme} and \eqref{tilde-process} respectively, are a simple processes, so we express the second and the third terms in the RHS of the previous equation as a Riemann integral. The same for the fourth and fifth terms. Similarly, the sixth and ninth terms can be written together as a (pathwise) It\^{o} integral,  
\begin{align*}
e^{2 \alpha_1 t}&|X_t|^2 \leq |X_0|^2 +  e^{2 \alpha_1 h_{max}} \Bigg\{  \int_0^t e^{2 \alpha_1 s}|\tilde X_{s-\tau}|^2 ds +   2 \beta \int_0^t e^{2 \alpha_1 s} ds \\
	&+ 2\int_0^t e^{2 \alpha_1 s} \<\bar X_s+f(\bar X_s,\tilde X_{s - \tau})[h(\bar X_s)I_{[0,\t)}(s)+(t-\t)I_{[\t,t]}(s)] ,g(\bar X_s,\tilde X_{s - \tau}) dW_s \> \\
	&+ \beta \sum_{n=0}^{n_t-1} e^{2\alpha_1 t_{n}}|\Delta W_n|^2 + e^{2 \alpha_1 \t}\beta|(W_t - W_{\t})|^2 \Bigg\}.
\end{align*}
Now, raising to the power $p/2$, using H\"{o}lder's inequality and taking the expectation of the supremum, we obtain
\begin{equation}
e^{p\alpha_1 t} \E \left[\sup_{0 \leq s \leq t} | X_t|^p \right] \leq 6^{p/2-1}e^{p \alpha_1 h_{max}} (H_1 + H_2 + H_3 + H_4),
\end{equation}
where
\begin{align*}
	H_1 &:=\E| X_0|^p + \E \left[ \left( 2 \alpha_2 \int_0^t e^{2 \alpha_1s} |\tilde X_{s-\tau}|^2ds \right)^{p/2} \right] +  \left( 2 \beta \int_0^t e^{2 \alpha_1 s} ds \right)^{p/2}; \\
	H_2 &:=\E \Bigg[\sup_{0 \leq s \leq t} \Bigg|2 \int_0^s e^{2 \alpha_1 u} \<\bar X_u+f(\bar X_u,\tilde X_{u - \tau})[h(\bar X_u)I_{[0,\s)}(u) \\
	 & \quad \quad \quad \quad \quad \quad \quad \quad \quad \quad \quad \quad \quad \quad \quad \quad \quad \quad \quad +(s-\s)I_{[\s,s]}(u)] ,g(\bar X_s,\tilde X_{u - \tau}) dW_u \> \Bigg|^{p/2} \Bigg]; \\
	H_3 &:= \E \left[ \left(\beta \sum_{n=0}^{n_t-1} e^{2\alpha_1 t_{n}}|\Delta W_n|^2 \right)^{p/2} \right]; \\
	H_4 &:= \beta^{p/2} e^{p\alpha_1 t} \E[\sup_{0 \leq s \leq t} |(W_s - W_{\s})|^p].
\end{align*}
Now we will establish bounds for each of the four terms above. In the remainder of the proof, $C$ is a  positive constant that may depend on $\beta, \alpha_1, \alpha_2, h_{max}$ and $p$, but independent of $t,$ that may change from line to line.
We start by bounding $H_1$. 
\begin{align*}
	H_1 &\leq \E| X_0|^p +  \E \left[ \left( 2 \alpha_2 \sup_{-\tau \leq s \leq t}|X_s|^2 \int_0^t e^{2 \alpha_1s} ds \right)^{p/2} \right] +  \left( 2 \beta \int_0^t e^{2\alpha_1 s} ds \right)^{p/2} \\
	&\leq \E| X_0|^p +   \left(\ff{ \alpha_2}{ \alpha_1}\right)^{p/2} \E \left[\sup_{-\tau \leq s \leq t}|X_s|^p \right] e^{ \alpha_1 p t}  +  \left(\ff {2 \beta}{2 \alpha_1}\right)^{p/2}  e^{\alpha_1 p t}  \\
	&\leq e^{\alpha_1 p t}\left(C+\left(\ff{ \alpha_2}{ \alpha_1}\right)^{p/2} \E \left[\sup_{0 \leq s \leq t}|X_s|^p \right] \right).
\end{align*}

For $H_2,$ the BDG inequality and condition \eqref{g LG infinite time} yields
$$H_2 \leq 2^{p/2}\beta^{p/4}C \E \left[\left( \int_0^t e^{4(\alpha_1 - \alpha_2)s} |(\bar X_s+f(\bar X_s,\tilde X_{s - \tau})[h(\bar X_s)I_{[0,\t)}(s)+(t-\t)I_{[\t,t]}(s)])|^2 ds \right)^{p/4} \right].$$
Since $e^{4(\alpha_1-\alpha_2)s} = e^{2(\alpha_1-\alpha_2)\ff {p-4} {p} s} e^{2(\alpha_1-\alpha_2)(1 + \ff 4 p) s},$ by H\"{o}lder's inequality, we get 
\begin{align*}
	\bigg( &\int_0^t e^{4(\alpha_1 - \alpha_2)s} |(\bar X_s+f(\bar X_s,\tilde X_{s - \tau})[h(\bar X_s)I_{[0,\t)}(s)+(t-\t)I_{[\t,t]}(s)])|^2 ds \bigg)^{p/4} \\
	  &\leq \left(\int_0^t e^{2(\alpha_1-\alpha_2) s} ds \right)^{\ff {p-4} {4}} \\
	  &\times \int_0^t e^{(\alpha_1-\alpha_2) \ff {p+4} 2 s}|(\bar X_s+f(\bar X_s,\tilde X_{s - \tau})[h(\bar X_s)I_{[0,\t)}(s)+(t-\t)I_{[\t,t]}(s)])|^{p/2} ds. 
\end{align*} 
Using Assumption \eqref{assu h - infinite time}, we obtain
\begin{align*}
	|\bar X_s &+f(\bar X_s,\tilde X_{s - \tau})[h(\bar X_s)I_{[0,\t)}(s)+(t-\t)I_{[\t,t]}(s)]|^2  \\
	&\leq |\bar X_s|^2 + 2 [h(\bar X_s)I_{[0,\t)}(s)+(t-\t)I_{[\t,t]}(s)] \left( -\alpha_1 |\bar X_s|^2 + \alpha_2 |\tilde X_{s - \tau}|^2  + \beta  \right) \\
	&\leq |\bar X_s|^2 + 2 h_{max} \left( \alpha_2 |\tilde X_{s-\tau}|^2  + \beta  \right).
\end{align*}
%Using H\"{o}lder's inequality
%\begin{align*}
%|\bar X_s &+f(\bar X_s,\bar X_{s - \tau})[h(\bar X_s)I_{[0,\t)}(s)+(t-\t)I_{[\t,t]}(s)]|^{p/2} \\
%	&\leq 3^{p/4-1} \left\{ |\bar X_s|^{p/2} + (2 h_{max} \alpha_2)^{p/4}|X_{s-\tau}|^{p/2}+(2\beta h_{max})^{p/4} \right\}.	
%\end{align*}
Therefore,
\begin{align*}
	H_2 &\leq \E \Bigg[C \left(\int_0^t e^{2 \alpha_1  s} ds \right)^{\ff {p-4} {4}} \\
	&\times \int_0^t e^{\alpha_1 \ff {p+4} 2 s}\left\{ |\bar X_s|^{p/2} + (2 h_{max} \alpha_2)^{p/4}|\tilde X_{s-\tau}|^{p/2}+(2\beta h_{max})^{p/4} \right\}   ds \Bigg].
\end{align*}
We can write the previous inequality as $H_2 \leq H_{21} + H_{22} +H_{23},$
where
\begin{align*}
	H_{21}&:= C \E[\sup_{0 \leq s \leq t}|X_s|^{p/2}] \left(\int_0^t e^{2\alpha_1 s} ds \right)^{\ff {p-4} {4}} \int_0^t e^{ \alpha_1 \ff {p+4} 2 s} ds; \\
	H_{22}&:= C(2 h_{max} \alpha_2)^{p/4}\E[\sup_{-\tau \leq s \leq t}| X_s|^{p/2}] \left(\int_0^t e^{2\alpha_1 s} ds \right)^{\ff {p-4} {4}} \left(\int_0^t e^{\alpha_1 \ff {p+4} 2 s}  ds\right); \\
	H_{23}&:= C(2 h_{max} \alpha_2)^{p/4} \left(\int_0^t e^{2\alpha_1 s} ds \right)^{\ff {p-4} {4}} \left(\int_0^t e^{\alpha_1 \ff {p+4} 2 s}  ds\right).
\end{align*}
Since,
\begin{align*}
	\left(\int_0^t e^{2 \alpha_1 s} ds \right)^{\ff {p-4} {4}} &\int_0^t e^{\alpha_1 \ff {p+4} 2 s}= \ff {e^{\alpha_1 (p-4)t}-1}{(2 \alpha_1 )^{\ff {p-4} 4}} \  \cdot \  \ff{e^{ \alpha_1 \ff {p+4} 2} t -1}{\alpha_1 \ff{p+4} 2} \\
	&\leq \ff{e^{\alpha_1 p t}}{\alpha_1 \ff{p+4} 2 (2 \alpha_1 )^{\ff {p-4} 4}}\leq  C e^{ \alpha_1 p t},
\end{align*}
we arrive at
\begin{align*}
H_2 &\leq C \E[\sup_{0 \leq s \leq t}|X_s|^{p/2}] e^{\alpha_1 p t} +  C \E[\sup_{-\tau \leq s \leq t}|X_s|^{p/2}] e^{\alpha_1 p t} +  C e^{\alpha_1 p t} \\
&= e^{ \alpha_1 p t}(C \E[\sup_{0 \leq s \leq t}|X_s|^{p/2}]+  C). 
\end{align*}
Using the elementary inequality $ab \leq \ff 1 {2\gamma} a^2 + \ff \gamma 2 b^2$ for all $\gamma \in \R^{+}$ and all $a,b \in \R$ with $a=C$ and $b = \E[\sup_{0 \leq s \leq t}|X_s|^{p/2},$ and later Jensen's inequality, we get 
\begin{align*}
	C \E[\sup_{0 \leq s \leq t}|X_s|^{p/2} ]\leq \ff 1 {2\gamma} C^2 + \ff \gamma 2 (\E[\sup_{0 \leq s \leq t}|X_s|^{p/2}])^2 \leq \ff 1 {2\gamma} C^2 + \ff \gamma 2 \E[\sup_{0 \leq s \leq t}|X_s|^p].
\end{align*}
Therefore,
\begin{equation} \label{I2 infinite time}
	H_2 \leq e^{\alpha_1 p t}(\ff \gamma 2 \E[\sup_{0 \leq s \leq t}|X_s|^p]+ C_{\gamma}),
\end{equation}
where the ``$\gamma$" in $C_{\gamma}$ is to emphasise that this constant depends also on $\gamma$ and is not fixed yet. %Note that Equation \eqref{I2 infinite time} holds for any constant $\gamma>0.$  

Now we will estimate $H_3.$ By the discrete H\"{o}lder's inequality we obtain
\begin{align*}
	\left|\sum_{n=0}^{n_t-1} e^{2 \alpha_1 t_n} |\Delta W_n|^2\right| &=  \left|\sum_{n=0}^{n_t-1} \left(h_n^{\ff {p-2} p} e^{2 \alpha_1 t_n \ff{p-2} p} \right) \left( h_n^{\ff 2 p} e^{\ff {4 \alpha_1 t_n}{p}}  \ff{|\Delta W_n|^2}{h_n} \right) \right| \\
	&\leq \left( \sum_{n=0}^{n_t-1} h_n e^{2 \alpha_1 t_n} \right)^{\ff {p-2} p} \left( \sum_{n=0}^{n_t-1} h_n e^{\ff {2 \alpha_1 t_n}{p}}\ff{|\Delta W_n|^p}{h_n^{p/2}} \right)^{\ff 2 p}.
\end{align*}
By \eqref{I3} we can derive that
\begin{align*}
	H_3 &\leq \E \left[ \beta^{p/2} \left(\sum_{n=0}^{n_t-1} h_n e^{2 \alpha_1 t_n} \right)^{\ff {p-2} 2} \sum_{n=0}^{n_t-1} h_n e^{2 \alpha_1 t_n} \ff {|\Delta W_n|^p}{h_n^{p/2}} \right] \\
	&\leq \beta^{p/2} \left(\int_0^{\t} e^{2 \alpha_1 s}ds \right)^{\ff {p-2} 2} C \int_0^{\t} e^{2 \alpha_1 s}ds  \leq Ce^{2 \alpha_1 t}.
\end{align*}
Using \eqref{I3} again, we have that
$$H_4 \leq \beta^{p/2}e^{\alpha_1 pt}C h_{max}^{p/2} \leq Ce^{\alpha_1 pt}.$$
Collecting together the bounds for $H_1$, $H_2$ $H_3$ and $H_4$, we obtain
$$e^{p \alpha_1 t} \E[\sup_{0 \leq s\leq t} |X_s|^p] \leq e^{p \alpha_1 t}(C_{\gamma} +  \ff \gamma 2  \E[\sup_{0 \leq s\leq t} |X_s|^p])+  \left(\ff{\alpha_2}{\alpha_1} \right)^{p/2} \E[\sup_{0 \leq s\leq t} |X_s|^p]). $$
Noting that the constant $C$ is independent of $t$, $0 \leq (\alpha_2/\alpha_1)^{p/2} < 1$ and taking $\gamma$  small enough such that $\ff \gamma 2  < 1-(\alpha_2/\alpha_1)^{p/2}$, the required assertion follows.
\end{proof}

\section{Almost sure exponential stability for SDDEs} \label{stab SDDEs}

 It was shown in \cite{Wu} that among other conditions, when the drift function satisfy the linear growth condition, the Euler-Maruyama approximate solution is a.s. exponentially stable. However, when the drift function satisfies the less restrictive one-sided linear growth condition, the EM solution needs not longer to be stable. It was proved in the same paper that the BEM solution maintains the stability. But it's well known that the BEM method is much more computationally expensive than explicit methods such as the adaptive EM method. Therefore, it is desirable to find explicit methods that provide numerical solutions that maintain the stability of the exact solution. Our goal in this section is to show that the adaptive solution can be a.s. exponentially stable for some SDDEs where the EM breaks down.

\begin{assu} \label{f and g - stability} The functions $f$ and $g$ satisfy the local Lipschitz condition: for every $R>0$ there exists a positive constant $C_R$ such that
	\begin{equation} \label{LL - stability}
		| f(x,y)-f(\bar x, \bar y)| +||g(x,y)-g(\bar x,\bar y)|| \leq C_R (|x- \bar x|+|y- \bar y|)
	\end{equation}
	for all $x,y,\bar x, \bar y \in \R^m$ with $|x|,|y|,|\bar x|, |\bar y| \leq R.$ Furthermore, there exist constants $\alpha_1, \alpha_2$ and $\beta$ satisfying
	 \begin{equation}\label{alpha1 - alpha2}
	 \alpha_1 > 2\alpha_2 \geq 0 \text { and } \beta > 0,
	 \end{equation} 
	 such that for all $x,y \in \R^m,$ $f$ satisfies  
	\begin{equation} \label{f and g - infinite time}
		\<x,f(x,y)\> + \ff 1 2 ||g(x,y)||^2 \leq -\alpha_1|x|^2 + \alpha_2|y|^2. 
	\end{equation}
\end{assu}

Under this assumption, the SDDE \eqref{SDE} has a unique solution.

\subsection{Counterexample (SDDE)}

We now return to the counterexample  \ref{SDDE counterexample}.

Let $X_k$ be defined by \eqref{EM counterexample'}
The following lemma proves a much stronger result that $X_k$ is not almost sure exponential  stable. It shows that the set in which the EM solution grows at a geometric rate has positive probability. 
\begin{lem} \label{lemma-counter}
	Consider the EM approximate solution \eqref{EM counterexample'} to the SDE \eqref{SDDE counterexample}. Then
	\begin{equation}	\label{eq0 - lemma counter'}
		\P\left(|X_k| \geq \frac{2^{k+3}}{\sqrt{\Delta}}, \ \forall k \geq 1 \right) > 0.  	
	\end{equation}	
\end{lem}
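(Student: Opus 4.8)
The plan is to exhibit a positive-probability event, built from the Brownian increments $\Delta W_k := W_{(k+1)\Delta}-W_{k\Delta}$ (which are genuinely i.i.d.\ $N(0,\Delta)$, since \eqref{EM counterexample'} is the constant-step scheme with $\Delta=1/m$), on which an elementary induction forces $|X_k|\ge 2^{k+3}/\sqrt\Delta$ for every $k\ge1$. The engine is the cubic drift term: rewriting \eqref{EM counterexample'} as $X_{k+1}=X_kR_k$ with
$$R_k = 1 - 2\Delta - X_k^2\Delta + \tfrac12 X_k\sin(X_{k-1})\Delta - \sqrt2\,\cos(X_{k-1})\Delta W_k ,$$
one sees that when $|X_k|$ is large the term $-X_k^2\Delta$ dominates $R_k$, so $|X_{k+1}|\approx\Delta|X_k|^3$, an enormous amplification that the required geometric rate cannot outrun.

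First I would establish the \emph{propagation step}. Using $|\sin|\le1$, $|\cos|\le1$ and $\Delta=1/m\le1$, on the event $A_k:=\{|\Delta W_k|\le k\}$ one has $|R_k|\ge X_k^2\Delta - 3 - \tfrac12|X_k|\Delta - \sqrt2\,k$. If in addition $|X_k|\ge 2^{k+3}/\sqrt\Delta$, then $|X_k|>1$, so $X_k^2\Delta-\tfrac12|X_k|\Delta\ge\tfrac12X_k^2\Delta\ge 2^{2k+5}$, while $3+\sqrt2\,k\le 2^{2k+4}$ for all $k\ge1$; hence $|R_k|\ge 2^{2k+4}$ and therefore $|X_{k+1}|\ge (2^{k+3}/\sqrt\Delta)\,2^{2k+4}=2^{3k+7}/\sqrt\Delta\ge 2^{(k+1)+3}/\sqrt\Delta$. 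Thus the bound $|X_k|\ge 2^{k+3}/\sqrt\Delta$ is carried from $k$ to $k+1$ on $A_k$, for every $k\ge1$.

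Next I would \emph{seed} the induction, i.e.\ show $\P(|X_1|\ge 2^4/\sqrt\Delta)>0$. From \eqref{EM counterexample'}, $X_1 = c\big(1-2\Delta-c^2\Delta+\tfrac12 c\sin(\xi(-\Delta))\Delta\big) - c\sqrt2\,\cos(\xi(-\Delta))\Delta\,\Delta W_0$ is an affine function of the single Gaussian $\Delta W_0$ with a nonzero leading coefficient, because $c\ne0$ and $\cos(\xi(-\Delta))\ne0$ for the initial datum under consideration. Hence $A_0:=\{|X_1|\ge 2^4/\sqrt\Delta\}$ contains a half-line in the value of $\Delta W_0$, so $\P(A_0)>0$, and $A_0\in\sigma(\Delta W_0)$ is independent of $A_1,A_2,\dots$. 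Combining the seed with the propagation step, induction on $k$ gives $A_0\cap\bigcap_{j\ge1}A_j\subseteq\{\,|X_k|\ge 2^{k+3}/\sqrt\Delta\ \ \forall k\ge1\,\}$.

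Finally I would estimate the probability: by independence of the increments, $\P\big(A_0\cap\bigcap_{k\ge1}A_k\big)=\P(A_0)\prod_{k\ge1}\P(A_k)$, and since $\Delta\le1$ we have $\P(A_k^c)=\P(|\Delta W_k|>k)\le\P(|N(0,1)|>k)$, which is summable in $k$, so $\prod_{k\ge1}(1-\P(A_k^c))>0$; with $\P(A_0)>0$ this yields \eqref{eq0 - lemma counter'}. The algebraic estimate on $R_k$ and the infinite-product bound are routine. The step needing care is the seeding: the initial value $c$ is fixed, so the lower bound on $|X_1|$ cannot come for free and must be drawn from the randomness in $\Delta W_0$, which is exactly where non-degeneracy of the diffusion at the initial point is used; one must also keep the tolerance windows $A_k=\{|\Delta W_k|\le k\}$ widening with $k$ (so that $\bigcap_kA_k$ is not null) while remaining utterly negligible against the super-exponential growth of $|X_k|$, so that the propagation step is untouched.
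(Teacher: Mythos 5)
Your proof is correct and follows essentially the same route as the paper: the same propagation step (a lower bound on $|\Delta W_k|$-controlled increments showing $|X_k|\ge 2^{k+3}/\sqrt{\Delta}$ forces $|X_{k+1}|\ge 2^{k+4}/\sqrt{\Delta}$), the same seeding via the Gaussian law of $X_1$, and the same independence-plus-infinite-product argument, the only cosmetic differences being your tolerance windows $|\Delta W_k|\le k$ with a direct tail-summability bound where the paper uses $|\Delta W_k|\le 2^k$ and cites Lemma 3.1 of \cite{Higham2}. The one point to flag is that your seeding (like the paper's) implicitly assumes $\cos(\xi(-\Delta))\neq 0$ so that $X_1$ is a nondegenerate Gaussian; this is harmless for exhibiting a counterexample but is not forced by the stated hypotheses on $\xi$.
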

The following proof is based on the counterexample's proof given in \cite{Higham2}.

\begin{proof}
	First we show that if $|X_1| \geq 2^4/\sqrt{\Delta},$ then 
\begin{equation}\label{eq1 - lemma counter}
	\P\left(|X_k| \geq \frac{2^{k+3}}{\sqrt{\Delta}}, \ \forall k \geq 1 \right) \geq \exp\left(-4e^{-2/\sqrt{\Delta}}\right).
\end{equation}	  	
We start by proving the following fact:
\begin{equation} \label{counterexample implication}	
|X_k| \geq \frac{2^{k+3}}{\sqrt{\Delta}} \quad \text{and} \quad |\Delta W_k| \leq 2^k \quad \text{imply} \quad |X_{k+1}| \geq \frac{2^{k+4}}{\sqrt{\Delta}}.
\end{equation}
To prove \eqref{counterexample implication}, assume that $|X_k| \geq \frac{2^{k+3}}{\sqrt{\Delta}}.$ Then		\begin{align*}
		|X_{k+1}| &\geq |X_k|\left| |X_k|^2\Delta -|1 +2 \Delta +1/2 \sin(X_{k-1}) \Delta + \sqrt{2} \cos(X_{k-1}) \Delta W_k|\right| \\
		&\geq |X_k|\left| |X_k|^2\Delta -(|1| +|2 \Delta| +|1/2 \Delta| + |\sqrt{2} \Delta W_k|)\right| \\
		&\geq \ff{2^{k+3}}{\sqrt{\Delta}} (2^{2k+6} - 6- \sqrt{2} 2^k) 
		\geq \ff{2^{k+4}}{\sqrt{\Delta}} (2^{2k+5} - 3- \sqrt{2} 2^{k-1}) \\
		&\geq \frac{2^{k+4}}{\sqrt{\Delta}}.	
	\end{align*}
Now, from \eqref{counterexample implication}, given that $|X_1| \geq 2^4/\sqrt{\Delta}$, for any integer $K \geq 0,$ the event that $\{|X_k| \geq 2^{k+3}/\sqrt{\Delta}, \forall 1 \leq k \leq K\}$ contains the event that $\{|W_k| \leq 2^k, \forall 1 \leq k \leq K \}$. Since $\{\Delta W_k\}$ are independent, we have
$$\P \left(|X_k| \geq \frac{2^{k+3}}{\sqrt{\Delta}}, \ \forall 1 \leq k \leq K \right) \geq \prod_{k=1}^K \P(|\Delta W_k| \leq 2^k).  $$
In order to prove \eqref{eq1 - lemma counter}, the rest of the proof is identical to the one in Lemma 3.1 in \cite{Higham2}.
To obtain the final result, Equation \eqref{eq1 - lemma counter}, we need to prove that $\mathbb P (|X_1| \geq 2^4/\sqrt{\Delta})>0.$ But this is true since $X_1$ is a normal random variable and  for a normal random variable $X$ with density function $f$, we have that for all $a \in \mathbb R,$ $\mathbb P(X \geq a)=\int_a^{\infty} f(x)dx >0.$
\end{proof} 
In contrast to the standard EM solution, now we will see that the adaptive EM solution, maintains the stability of the exact solution of SDDE \eqref{SDDE counterexample}. But previous to that, we need to impose more assumptions.
\begin{assu} \label{h - stability}
	For every $\delta,$ the time step function $h^\delta:\R \rightarrow \R^+,$ is continuous and there exist constants $\alpha_1 > \alpha_2 \geq 0$ and $\beta > 0,$ such that for all $x,y \in \R^m,$
	\begin{equation}\label{eq h - stability}
		\<x,f(x,y)\> + \ff 1 2 h^\delta(x)|f(x,y)|^2 + \ff d 2 ||g(x,y)||^2 \leq -\alpha_1|x|^2 + \alpha_2 \ff {\min(h^\delta(y),h^\delta(x))}{h^\delta(x)} |y|^2,
	\end{equation}
where $d$ is the dimension of the Brownian motion in the SDDE \eqref{SDE}. 
	Furthermore, the function $h^\delta$ is uniformly bounded by the real numbers $0 < h^\delta_{\min} < h^\delta_{\max} < 1$, where $h^\delta_{\max}$ is small enough such that  
	\begin{equation}\label{h_max}
		2 \alpha_2 e^{2 \alpha_1 h_{\max}} < \alpha_1.
	\end{equation}
	
\end{assu}
Note that condition \eqref{eq h - stability} implies condition \eqref{f and g - infinite time} with the same values of $\alpha_1$ and $\alpha_2$. An example of function $h^\delta$ that satisfies condition \eqref{eq h - stability} for the SDDE \eqref{SDDE counterexample} is 
\begin{equation} \label{hs-counter}
	h^\delta(x):= \left( \ff 1 {25} I_{\{|x|<1\}} + 0.25 I_{\{|x|\geq 1\}}\ff{|x|^2}{\max(1,|f(x,y)|^2)} \right) \delta.
\end{equation} 

The following is the main result of this section.
\begin{thm}\label{a.s. exp stab SDDE}
	Consider the SDDE \eqref{SDE} with a $d$-dimensional Brownian motion. If $f$ and $g$ satisfy Assumption \ref{f and g - stability} and $h^\delta$ satisfies Assumption \ref{h - stability}, then the adaptive approximate solution \eq{dis-scheme} is almost sure exponentially stable, i.e. there exists a $\lambda > 0$ such that
	$$\limsup_{n \rightarrow \infty} \ff {\log|\hat X_{t_n}|}{t_n} \leq -  \lambda  \text{ a.s.}$$ 
\end{thm}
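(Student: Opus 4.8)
The plan is to run a discrete Lyapunov/martingale argument in the spirit of Wu et al.\ \cite{Wu}, the new ingredient being the bookkeeping of the random adaptive mesh inside the delay term. Write $f_n:=f(\hat X_{t_n},\bar X_{t_n-\tau})$, $g_n:=g(\hat X_{t_n},\bar X_{t_n-\tau})$, $h_n:=h^\delta(\hat X_{t_n})$ and, for $t_n\ge\tau$, let $\rho(n)$ be the largest index with $t_{\rho(n)}\le t_n-\tau$, so $\bar X_{t_n-\tau}=\hat X_{t_{\rho(n)}}$ and $h^\delta(\bar X_{t_n-\tau})=h_{\rho(n)}$. First I would square \eqref{dis-scheme} and take $\E[\,\cdot\mid\F_{t_n}]$: the cross term $2\langle\hat X_{t_n}+h_nf_n,g_n\Delta W_n\rangle$ has zero conditional mean and $\E[|g_n\Delta W_n|^2\mid\F_{t_n}]=h_n\|g_n\|^2$ (strong Markov property; conditionally $\Delta W_n\sim\mathcal N(0,h_nI_d)$). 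Using $d\ge1$ to absorb $h_n\|g_n\|^2\le h_nd\|g_n\|^2$ and applying $2h_n$ times \eqref{eq h - stability} gives
\[
\E\big[|\hat X_{t_{n+1}}|^2\mid\F_{t_n}\big]\le(1-2\alpha_1 h_n)|\hat X_{t_n}|^2+2\alpha_2\min\!\big(h^\delta(\bar X_{t_n-\tau}),h_n\big)\,|\bar X_{t_n-\tau}|^2 .
\]
I would then fix a small $\gamma\in(0,\alpha_1)$, set $V_n:=e^{2\gamma t_n}|\hat X_{t_n}|^2$, multiply by $e^{2\gamma t_{n+1}}$, and use $e^{2\gamma h_n}(1-2\alpha_1 h_n)\le1-\theta h_n$ with $\theta:=2(\alpha_1-\gamma e^{2\gamma h_{\max}})>0$ together with the mesh bound $t_{n+1}-t_{\rho(n)}<\tau+2h_{\max}$ to reach, for $t_n\ge\tau$,
\[
\E[V_{n+1}\mid\F_{t_n}]\le(1-\theta h_n)V_n+\kappa_\gamma\,\min(h_{\rho(n)},h_n)\,V_{\rho(n)},\qquad\kappa_\gamma:=2\alpha_2 e^{2\gamma(\tau+2h_{\max})},
\]
the delay term being replaced by $\kappa_\gamma h_n\|\xi\|^2$ when $t_n<\tau$.

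Next I would sum this over $n=0,\dots,N-1$, take expectations, and reindex the delay feedback by the source index $j=\rho(n)$. The block $\{n:\rho(n)=j\}=\{n:t_j+\tau\le t_n<t_{j+1}+\tau\}$ has the key property that at most one of its elements — necessarily the last — can have $h_n\ge h_j$, so $\sum_{n:\rho(n)=j}\min(h_j,h_n)<2h_j$; this is exactly where the $\min$ in \eqref{eq h - stability} earns its keep. The blocks with $t_n<\tau$ telescope, contributing at most $(\tau+h_{\max})\E\|\xi\|^2$ in total. One then obtains
\[
\E[V_N]+\big(\theta-2\kappa_\gamma\big)\sum_{n=0}^{N-1}\E[h_nV_n]\le\E\|\xi\|^2+C_\xi ,
\]
and since $\theta\to2\alpha_1$ and $2\kappa_\gamma\to4\alpha_2$ as $\gamma\downarrow0$, conditions \eqref{alpha1 - alpha2}--\eqref{h_max} let me fix $\gamma>0$ with $2\kappa_\gamma<\theta$. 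Hence $\sup_N\E[V_N]<\infty$, $\sum_n\E[h_nV_n]<\infty$, and (by the same reindexing) the delay terms are summable in expectation.

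Finally, the one-step inequality is of Robbins--Siegmund type, $\E[V_{n+1}\mid\F_{t_n}]\le V_n+b_n-c_n$ with $b_n,c_n\ge0$ and $\F_{t_n}$-measurable, $c_n=\theta h_nV_n$, and $\sum_n b_n<\infty$ a.s.\ by the previous step; therefore $V_n$ converges a.s.\ to a finite limit and $\sum_n h_nV_n<\infty$ a.s. A finite limit for $t_n$ is impossible — it would force $|\hat X_{t_n}|\to\infty$ by continuity and strict positivity of $h^\delta$, contradicting $\sup_nV_n<\infty$ — so $t_n\uparrow\infty$ and $\sum_n h_n=\infty$; combined with $\sum_n h_nV_n<\infty$ this forces $\lim_nV_n=0$. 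In particular $K_\omega:=\sup_nV_n<\infty$ a.s., whence $|\hat X_{t_n}|^2\le K_\omega e^{-2\gamma t_n}$ and $\limsup_{n\to\infty}t_n^{-1}\log|\hat X_{t_n}|\le-\gamma$ a.s., i.e.\ the claim with $\lambda=\gamma$.

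I expect the reindexing to be the main obstacle: on the adaptive mesh the delay points back to the slowly varying index $\rho(n)$, and the mesh may cut the delay window $[t_j+\tau,t_{j+1}+\tau)$ into arbitrarily many cells, so a careless estimate inflates the feedback constant; the $\min(h^\delta(y),h^\delta(x))$ weighting in \eqref{eq h - stability} is precisely what compresses the block sum to $2h_j$, and \eqref{alpha1 - alpha2}--\eqref{h_max} are calibrated so the feedback stays strictly below the dissipation rate $\theta$. A convenient point is that, because conditional expectations are taken before any estimation, no a priori moment bound on the adaptive solution is needed — \eqref{eq h - stability} delivers all of the dissipativity one step at a time.
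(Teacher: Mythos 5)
Your proposal is correct, but it takes a genuinely different route from the paper's. The paper argues pathwise: it squares the scheme keeping the stochastic increments explicit, multiplies by $e^{\lambda t_{n+1}}$, solves the recurrence, and then invokes a separate estimate (Lemma \ref{lem - stability}) to absorb the weighted delay sum $\sum_n e^{\lambda t_n}|\bar X_{t_n-\tau}|^2 h^\delta(\bar X_{t_n-\tau})$ back into $\sum_n e^{\lambda t_n}|\hat X_{t_n}|^2 h_n$; what remains is the explicit local martingale $M_n+N_n$, handled by the discrete semimartingale convergence theorem of \cite{Wu}. You instead take conditional expectations at the very first step, which kills the martingale increments immediately and yields a Robbins--Siegmund recursion $\E[V_{n+1}\mid\F_{t_n}]\le(1-\theta h_n)V_n+b_n$, with the delay feedback controlled in expectation by summing and reindexing over the source index $j=\rho(n)$. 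The most substantive difference is your block-counting bound $\sum_{n:\rho(n)=j}\min(h_j,h_n)<2h_j$: it deals head-on with the multiplicity of mesh points falling in a single delay window $[t_j+\tau,t_{j+1}+\tau)$, which is precisely where the $\min(h^\delta(y),h^\delta(x))$ weight in \eqref{eq h - stability} is needed, and which the paper's corresponding step \eqref{lemma - eq0B} (with its factor $e^{\lambda h_{\max}M}$ and a decay rate $\lambda$ chosen after seeing $M=M(\omega)$) treats far more loosely. Your route buys a decay rate $\gamma$ fixed in advance independently of $\omega$, needs only $\alpha_1>2\alpha_2$ from \eqref{alpha1 - alpha2} rather than the full strength of \eqref{h_max}, and supplies the attainability argument $t_n\uparrow\infty$ a.s.\ that the conclusion implicitly requires; the paper's pathwise route, for its part, avoids the integrability bookkeeping for $V_n$ and lands directly on a martingale in the form used in \cite{Wu}. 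Both arguments rest on the same two pillars --- the strong Markov property making $\Delta W_n$ conditionally $\mathcal N(0,h_nI_d)$ given $\F_{t_n}$, and a discrete supermartingale convergence theorem --- so the proposal is a legitimate, and in the delay-reindexing step a tighter, alternative proof.
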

Before proving Theorem \ref{a.s. exp stab SDDE}, we show that the SDDE \eq{SDDE counterexample} satisfies Assumption \ref{f and g - stability}
$$\<x,f(x,y)\> + \ff 1 2 |g(x,y)|^2 = -2x^2-x^4+ \ff 1 2 \sin(y) x^2  +x^2 \cos^2(y) \leq - \ff 1 2 x^2 .$$
In order to show that $h^\delta$ satisfies \eqref{eq h - stability} for the SDDE \eq{SDDE counterexample}, we substitute \eqref{hs-counter} into \eqref{eq h - stability} and differentiate between the cases $|x|<1$ and $|x| \geq 1$. For $|x|<1$ we have
\begin{align*}
	\<x,&f(x,y)\> + \ff 1 2 h^\delta(x)|f(x,y)|^2 + \ff d 2 ||g(x,y)||^2 = -2x^2-x^4+ \ff 1 2 x^2 \sin(y) \\
	&+\ff 1 2 \ff 1 {25}\delta(4x^2+4x^4-2x^2 \sin(y)+x^6-x^4 \sin(y)+ \ff 1 4 x^2 \sin(y)) + \ff 1 2 2 x^2 \cos^2(y) \\
	&\leq \ff {-3x^2} {10} 
\end{align*}
and for $|x| \geq 1$ we have
\begin{align*}
\<x,&f(x,y)\> + \ff 1 2 h^\delta(x)|f(x,y)|^2 + \ff d 2 ||g(x,y)||^2 \\
&= -2x^2-x^4+ \ff 1 2 x^2 \sin(y) + \ff 1 2 \ff 1 4 \delta |x|^2 + \ff 1 2 2 x^2 \cos^2(y) \leq \ff {-3x^2} {8}.
\end{align*}
Thus the adaptive approximate solution of the SDDE \eq{SDDE counterexample} implemented with $h^\delta$ defined as  \eqref{hs-counter} is almost sure exponentially stable.

We will prove the theorem, but first we need the following lemma.
\begin{lem} \label{lem - stability}
Consider the SDDE \eqref{SDE} with a $d$-dimensional Brownian motion. Suppose $f$ and $g$ satisfy Assumption \ref{f and g - stability} and $h^{\delta}$ satisfies Assumption \ref{h - stability}. Let $l$ be a positive integer. Then there exists $\lambda \in (0, \alpha_1)$ such that 
\begin{align} \label{lem - eq0}
\sum_{n=1}^{l}  e^{\lambda t_n} |\hat X_{ t_n}|^2 h_n &\leq C   + C \sum_{n=1}^{l}e^{\lambda t_n} |g(\hat X_{t_n},\bar X_{t_n - \tau})|^2 (|\Delta W_n|^2 - h_nd) \nonumber \\
&+ C \sum_{n=1}^{l}e^{\lambda t_n} \langle \hat X_{t_n}+f(\hat X_{t_n},\bar X_{t_n -\tau})h_n,g(\hat X_{t_n},\bar X_{t_n - \tau})\Delta W_n \rangle \text{ a.s., }	
\end{align}
where $C$ is a positive constant dependent on $\omega \in \Omega$, the constants $\alpha_1,\alpha_2, h_{max}$ and $\lambda$, but independent of $l$ or $t_n$.
\end{lem}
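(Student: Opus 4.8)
The plan is to mimic the continuous-time argument used for almost sure exponential stability: find a Lyapunov-type quantity, multiply by an exponential weight $e^{\lambda t_n}$, sum the one-step inequalities, and isolate the ``predictable'' part (which we want to bound) from the martingale-difference terms (which appear on the right-hand side of \eqref{lem - eq0}). First I would start from the discrete scheme \eqref{dis-scheme}, take squared Euclidean norms, and expand exactly as in the proof of Theorem \ref{thm - boundedness pth moments - infinite time}:
$$|\hat X_{t_{n+1}}|^2 = |\hat X_{t_n}|^2 + 2h_n\Big(\langle \hat X_{t_n},f(\hat X_{t_n},\bar X_{t_n-\tau})\rangle + \tfrac12 h_n|f(\hat X_{t_n},\bar X_{t_n-\tau})|^2\Big) + 2\langle \hat X_{t_n}+f(\hat X_{t_n},\bar X_{t_n-\tau})h_n, g(\hat X_{t_n},\bar X_{t_n-\tau})\Delta W_n\rangle + |g(\hat X_{t_n},\bar X_{t_n-\tau})\Delta W_n|^2.$$
The key point is to split the last term as $|g\,\Delta W_n|^2 = |g(\hat X_{t_n},\bar X_{t_n-\tau})|^2(|\Delta W_n|^2 - h_n d) + h_n d\,|g(\hat X_{t_n},\bar X_{t_n-\tau})|^2$ — wait, more precisely $\E[|g\Delta W_n|^2\mid\F_{t_n}] = h_n\|g\|^2$, so $|g\Delta W_n|^2 - h_n\|g\|^2$ is a martingale-difference term and $h_n\|g\|^2$ combines with the drift. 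Then condition \eqref{eq h - stability} gives the pointwise bound
$$\langle \hat X_{t_n},f(\hat X_{t_n},\bar X_{t_n-\tau})\rangle + \tfrac12 h_n|f(\hat X_{t_n},\bar X_{t_n-\tau})|^2 + \tfrac d2\|g(\hat X_{t_n},\bar X_{t_n-\tau})\|^2 \le -\alpha_1|\hat X_{t_n}|^2 + \alpha_2\frac{\min(h_n^\tau,h_n)}{h_n}|\bar X_{t_n-\tau}|^2,$$
where $h_n^\tau := h^\delta(\bar X_{t_n-\tau})$, so after multiplying by $2h_n$ we get $|\hat X_{t_{n+1}}|^2 \le (1-2\alpha_1 h_n)|\hat X_{t_n}|^2 + 2\alpha_2\min(h_n^\tau,h_n)|\bar X_{t_n-\tau}|^2 + M_n$, with $M_n$ collecting the two martingale-difference pieces (the $\langle\cdot,g\Delta W_n\rangle$ term and the $\|g\|^2(|\Delta W_n|^2-h_nd)$ term).

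Next I would introduce the exponential weight. Using $1-2\alpha_1 h_n \le e^{-2\alpha_1 h_n}$ and $t_{n+1}=t_n+h_n$, multiply the recursion by $e^{\lambda t_{n+1}}$ for a parameter $\lambda\in(0,\alpha_1)$ to be chosen; since $e^{\lambda t_{n+1}}(1-2\alpha_1 h_n) \le e^{\lambda t_{n+1}}e^{-2\alpha_1 h_n} = e^{\lambda t_n}e^{(\lambda-2\alpha_1)h_n} \le e^{\lambda t_n}$ (as $\lambda<2\alpha_1$), we obtain a telescoping inequality of the form
$$e^{\lambda t_{n+1}}|\hat X_{t_{n+1}}|^2 \le e^{\lambda t_n}|\hat X_{t_n}|^2 + 2\alpha_2 e^{\lambda t_{n+1}}\min(h_n^\tau,h_n)|\bar X_{t_n-\tau}|^2 + e^{\lambda t_{n+1}}M_n.$$
Actually to create the $\sum e^{\lambda t_n}|\hat X_{t_n}|^2 h_n$ term on the left of \eqref{lem - eq0} I need to keep a slack: write $1-2\alpha_1 h_n = e^{-2\alpha_1 h_n} - (e^{-2\alpha_1 h_n}-1+2\alpha_1 h_n) \le e^{-2\alpha_1 h_n}$ but also keep an $-2\lambda h_n|\hat X_{t_n}|^2$-type term by choosing $e^{\lambda t_{n+1}}(1-2\alpha_1 h_n)\le e^{\lambda t_n}(1 - c\lambda h_n)$ for small $c$ — i.e. absorb a genuine negative $\sum \lambda e^{\lambda t_n}|\hat X_{t_n}|^2 h_n$ into the telescoping. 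Summing from $n=1$ to $l$, telescoping the $e^{\lambda t_n}|\hat X_{t_n}|^2$ terms, and dropping the nonnegative left-over $e^{\lambda t_{l+1}}|\hat X_{t_{l+1}}|^2$, yields
$$c\lambda\sum_{n=1}^l e^{\lambda t_n}|\hat X_{t_n}|^2 h_n \le e^{\lambda t_1}|\hat X_{t_1}|^2 + 2\alpha_2\sum_{n=1}^l e^{\lambda t_{n+1}}\min(h_n^\tau,h_n)|\bar X_{t_n-\tau}|^2 + \sum_{n=1}^l e^{\lambda t_{n+1}}M_n.$$

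The main obstacle — and the step I would spend the most care on — is handling the delay term $\sum e^{\lambda t_{n+1}}\min(h_n^\tau,h_n)|\bar X_{t_n-\tau}|^2$ and absorbing it back into the left-hand side. The trick encoded in the $\min(h^\delta(y),h^\delta(x))/h^\delta(x)$ factor of \eqref{eq h - stability} is precisely to make this delay sum comparable to a sum of the same shape but over delayed indices: since $\bar X_{t_n-\tau}$ is constant and equal to $\hat X_{t_k}$ on the block where $t_k\le t_n-\tau<t_{k+1}$, and $\min(h_n^\tau,h_n)\le h^\delta(\hat X_{t_k})=h_k$ there, one re-indexes to get $\sum_{n} e^{\lambda t_{n+1}}\min(h_n^\tau,h_n)|\bar X_{t_n-\tau}|^2 \lesssim e^{\lambda\tau}e^{2\lambda h_{\max}}\sum_k e^{\lambda t_k}h_k|\hat X_{t_k}|^2$ plus a finite contribution from the initial segment $t_n-\tau<0$ (which is bounded by $\|\xi\|^2$ times a constant, hence absorbed into $C$). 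Using $e^{\lambda t_{n+1}}\le e^{\lambda(t_n+h_{\max})}$ and, crucially, the smallness condition \eqref{h_max} together with $\lambda<\alpha_1$ to guarantee $2\alpha_2 e^{\lambda\tau}e^{2\lambda h_{\max}}<c\lambda$ cannot hold for all $\lambda$ — rather one chooses $\lambda$ small enough that the combined coefficient $2\alpha_2 e^{2\lambda h_{\max}}$ of the re-indexed sum is strictly less than the coefficient $c\lambda$... hmm, that requires $\lambda$ bounded below, so the actual balance is: the delay sum's coefficient must be dominated using \eqref{h_max} which bounds $2\alpha_2 e^{2\alpha_1 h_{\max}}<\alpha_1$; by continuity there is $\lambda\in(0,\alpha_1)$ with $2\alpha_2 e^{2\lambda h_{\max}} < \alpha_1 - \text{(slack)}$, and one compares against $\alpha_1\sum e^{\lambda t_n}h_n|\hat X_{t_n}|^2$ rather than $\lambda\sum(\cdots)$ — i.e. keep the full $-2\alpha_1 h_n|\hat X_{t_n}|^2$ strength, not just $-2\lambda h_n$, when doing the subtraction. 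After moving this re-indexed delay sum to the left, its coefficient is strictly smaller than that on the left by the choice of $\lambda$ and \eqref{h_max}, so it can be absorbed, leaving exactly \eqref{lem - eq0} with the two martingale-difference sums on the right and all deterministic/initial-data contributions lumped into the $\omega$-dependent constant $C$ (the $\omega$-dependence entering only through $|\hat X_{t_1}|^2$ and $\|\xi\|$). I would also double-check the edge indexing near $n$ with $t_n-\tau$ straddling $0$, and that $h_{\min}>0$ ensures the sums and the block re-indexing are well-defined with finitely many terms per unit time.
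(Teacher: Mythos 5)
Your proposal is correct in substance and arrives at \eqref{lem - eq0} by a mildly different route from the paper's. The shared skeleton is identical: expand $|\hat X_{t_{n+1}}|^2$ from \eqref{dis-scheme}, isolate the two martingale-difference terms $\|g\|^2(|\Delta W_n|^2-h_n d)$ and $2\langle \hat X_{t_n}+f h_n,\, g\Delta W_n\rangle$, apply \eqref{eq h - stability}, re-index the delay sum onto the blocks where $\bar X_{t_n-\tau}=\hat X_{t_k}$, and absorb it into the left-hand side using $\alpha_1>2\alpha_2$ and \eqref{h_max}. The difference is in how the weighted sum $\sum_n e^{\lambda t_n}|\hat X_{t_n}|^2h_n$ is manufactured. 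The paper first multiplies the one-step inequality by $e^{\alpha_1 t_{n+1}}$, solves the recurrence to get a pointwise bound on $|\hat X_{t_n}|^2$, and only then multiplies by $e^{\lambda t_n}h_n$ and sums over $n\le l$; this produces double sums, collapsed by interchanging the order of summation and estimating $\sum_{k\ge n}e^{-(\alpha_1-\lambda)t_k}h_k\le \frac{e^{\alpha_1 h_{\max}}}{\alpha_1-\lambda}e^{-(\alpha_1-\lambda)t_n}$, which is where the $1/(\alpha_1-\lambda)$ factors in the paper's constants come from. You instead weight by $e^{\lambda t_{n+1}}$ directly and retain the slack $e^{\lambda h_n}(1-2\alpha_1h_n)\le 1-(2\alpha_1-\lambda)h_n$, so the target sum appears with coefficient $2\alpha_1-\lambda>\alpha_1$ immediately upon telescoping; no double sums arise, and the absorption condition $2\alpha_2e^{\lambda\tau+2\lambda h_{\max}}<2\alpha_1-\lambda$ holds for small $\lambda$ directly from \eqref{alpha1 - alpha2}. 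This is, if anything, the cleaner of the two routes. One caveat applies equally to the paper's step \eqref{lemma - eq0B} and to your block argument: several indices $n$ may have $t_n-\tau$ landing in the same block $[t_k,t_{k+1})$, so the re-indexed delay sum carries a multiplicity factor; the crude bounds (e.g.\ $\min(h^\delta(\bar X_{t_n-\tau}),h_n)\le h_n$ summed over a block gives at most $h_k+h_{\max}$, not $O(h_k)$) leave a constant of order $h_{\max}/h_{\min}$ in front of the absorbed sum, which is not obviously dominated by $2\alpha_1-\lambda$ under the stated hypotheses. You explicitly flag this check at the end, which is the right instinct; it is the one point at which both your argument and the paper's need additional care before the constant in \eqref{lem - eq0} can be certified.
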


\begin{proof}
From \eq{dis-scheme} and \eq{eq h - stability}, we have
\begin{align*} 
	|\hat X_{t_{n+1}}|^2 &= |\hat X_{t_n}|^2 +2h_n(\langle \hat X_{t_n},f(\hat X_{t_n},\bar X_{t_n - \tau})\rangle + \ff 1 2 h_n|f(\hat X_{t_n},\bar X_{t_n - \tau})|^2)   \\
	&+  2\langle \hat X_{t_n}+ f(\hat X_{t_n},\bar X_{t_n - \tau})h_n,g(\hat X_{t_n},\bar X_{t_n - \tau})\Delta W_n \rangle + |g(\hat X_{t_n},\bar X_{t_n - \tau}) \Delta W_n|^2 \\
	&\leq  |\hat X_{t_n}|^2 +2h_n(\langle \hat X_{t_n},f(\hat X_{t_n},\bar X_{t_n - \tau})\rangle + \ff 1 2 h_n|f(\hat X_{t_n},\bar X_{t_n - \tau})|^2 + \ff d 2 |g(\hat X_{t_n},\bar X_{t_n - \tau})|^2) \\
	&+	2\langle \hat X_{t_n}+ f(\hat X_{t_n},\bar X_{t_n - \tau})h_n,g(\hat X_{t_n},\bar X_{t_n - \tau})\Delta W_n \rangle	+ |g(\hat X_{t_n},\bar X_{t_n - \tau})|^2(|\Delta W_n|^2 - h_n d) \\
	&\leq  |\hat X_{t_n}|^2 -2 \alpha_1 h_n |\hat X_{t_n}|^2 + 2 \alpha_2 h^\delta(\bar X_{t_n - \tau}) |\bar X_{t_n - \tau}|^2 \\
	&+ 2\langle \hat X_{t_n}+ f(\hat X_{t_n},\bar X_{t_n - \tau})h_n,g(\hat X_{t_n},\bar X_{t_n - \tau})\Delta W_n \rangle + |g(\hat X_{t_n},\bar X_{t_n - \tau})|^2(|\Delta W_n|^2 - h_n d).
\end{align*}
Multiplying by $e^{\alpha_1 t_{n+1}}$ and using the fact that $1+x \leq e^x$ with $x = -h_n \alpha_1$, yields
\begin{align*}
	e^{\alpha_1 t_{n+1}}|\hat X_{t_{n+1}}|^2 &\leq  e^{\alpha_1 t_n}|\hat X_{t_n}|^2 + 2  \alpha_2 h^\delta(\bar X_{t_n - \tau}) e^{\alpha_1 t_{n+1}}  |\bar X_{t_n - \tau}|^2 \\
	&+ e^{\alpha_1 t_{n+1}}|g(\hat X_{t_n},\bar X_{t_n - \tau})|^2(|\Delta W_n|^2 - h_n d) \\
	&+ 2 e^{\alpha_1 t_{n+1}} \langle \hat X_{t_n}+ f(\hat X_{t_n},\bar X_{t_n - \tau})h_n,g(\hat X_{t_n},\bar X_{t_n - \tau})\Delta W_n \rangle.
\end{align*}
Solving the recurrence and using the bound $h_{\max}$, one can see that
\begin{align*}
	&e^{\alpha_1 t_n}|\hat X_{t_n}|^2 \leq  |X_0|^2 + e^{\alpha_1 h_{\max}} \Bigg\{ \sum_{k=0}^{n-1} e^{\alpha_1 t_k}|g(\hat X_{t_k},\bar X_{t_k - \tau})|^2(|\Delta W_k|^2 - h_k d) \\
	&+2 \alpha_2 \sum_{k=0}^{n-1} e^{\alpha_1 t_k}  |\bar X_{t_k - \tau}|^2 h^\delta(\bar X_{t_k - \tau}) + 2 \sum_{k=0}^{n-1} e^{\alpha_1 t_k} \langle \hat X_{t_k}+f(\hat X_{t_k},\bar X_{t_k - \tau})h_k,g(\hat X_{t_k},\bar X_{t_k - \tau})\Delta W_k \rangle \Bigg\}.
\end{align*}
Thus,
\begin{align*}
	|\hat X_{t_n}|^2 &\leq  e^{-\alpha_1 t_n}|X_0|^2 	+ e^{\alpha_1 h_{\max}} \Bigg\{e^{-\alpha_1 t_n} \sum_{k=0}^{n-1} e^{\alpha_1 t_k}|g(\hat X_{t_k},\bar X_{t_k - \tau})|^2(|\Delta W_k|^2 - h_k d) \\
	+& 2 \alpha_2 e^{-\alpha_1 t_n} \sum_{k=0}^{n-1} e^{\alpha_1 t_k}  |\bar X_{t_k - \tau}|^2 h^\delta(\bar X_{t_k - \tau})\\
	& + 2 e^{-\alpha_1 t_n} \sum_{k=0}^{n-1} e^{\alpha_1 t_k} \langle \hat X_{t_k}+f(\hat X_{t_k},\bar X_{t_k - \tau})h_k,g(\hat X_{t_k},\bar X_{t_k - \tau})\Delta W_k \rangle \Bigg\}.
\end{align*}
So, for any $\lambda \in (0,\alpha_1)$ we have
\begin{align}\label{lemma - eq0}
	\sum_{n=1}^{l} e^{\lambda t_n}|\hat X_{t_n}|^2 h_n &\leq  e^{-(\alpha_1-\lambda) t_n}|X_0|^2 h_n 	+ e^{\alpha_1 h_{\max}} \Bigg\{\sum_{n=0}^{l} e^{-(\alpha_1-\lambda) t_n} h_n \sum_{k=0}^{n-1} e^{\alpha_1 t_k}|g(\hat X_{t_k},\bar X_{t_k - \tau})|^2 \nonumber \\
	&(|\Delta W_k|^2 - h_k d) + 2 \alpha_2 \sum_{n=0}^{l} e^{-(\alpha_1-\lambda) t_n} h_n  \sum_{k=0}^{n-1} e^{\alpha_1 t_k}  |\bar X_{t_k - \tau}|^2 h^\delta(\bar X_{t_k - \tau}) \nonumber \\
	&+2 \sum_{n=0}^{l} e^{-(\alpha_1-\lambda) t_n} h_n \sum_{k=0}^{n-1} e^{\alpha_1 t_k} \langle \hat X_{t_k}+f(\hat X_{t_k},\bar X_{t_k - \tau})h_k,g(\hat X_{t_k},\bar X_{t_k - \tau})\Delta W_k \rangle \Bigg\}.
\end{align}
Moreover, we can see that
\begin{align*}
2 \alpha_2 e^{\alpha_1 h_{\max}} \sum_{n=1}^{l} e^{-(\alpha_1-\lambda) t_n} &h_n  \sum_{k=0}^{n-1} e^{\alpha_1 t_k}  |\bar X_{t_k - \tau}|^2 h^\delta(\bar X_{t_k - \tau})  \\
&= 2 \alpha_2 e^{\alpha_1 h_{\max}} \sum_{n=1}^{l}e^{\alpha_1 t_n}  |\bar X_{t_n - \tau}|^2 h^\delta(\bar X_{t_n - \tau})   \sum_{k=n}^{l}  e^{-(\alpha_1-\lambda) t_k} h_k.
\end{align*}
Now since the function $e^{-(\alpha_1-\lambda)s}$ is decreasing on $s$, we see that 
$$\sum_{k=n}^{l}  e^{-(\alpha_1-\lambda) t_k} h_k = \sum_{k=n}^{l}  e^{(\alpha_1-\lambda) h_k}e^{-(\alpha_1-\lambda) t_{k+1}} h_k \leq e^{(\alpha_1-\lambda)h_{\max}}\int_{t_n}^{t_l} e^{-(\alpha_1-\lambda)s} ds   \leq \ff {e^{\alpha_1 h_{\max}}} {\alpha_1 - \lambda} e^{-(\alpha_1 - \lambda)t_n}.$$
Thus
\begin{align}\label{lemma - eq0A} 
	2 \alpha_2 e^{\alpha_1 h_{\max}} \sum_{n=1}^{l} e^{-(\alpha_1-\lambda) t_n} h_n  &\sum_{k=0}^{n-1} e^{\alpha_1 t_k}  |\bar X_{t_k - \tau}|^2 h^\delta(\bar X_{t_k - \tau}) \nonumber\\
	&\leq \ff{2 \alpha_2 e^{2\alpha_1 h_{max}}}{\alpha_1 - \lambda} \left(\sum_{n=1}^{l}e^{\lambda t_n}  |\bar X_{t_n - \tau}|^2 h^\delta(\bar X_{t_n - \tau})   \right). 
\end{align}
Let $M=M(\omega)$ be such that $t_M \leq \tau < t_{M+1}.$ Then we can write
\begin{align}\label{lemma - eq0B}
\sum_{n=1}^{l} e^{\lambda t_n}  |\bar X_{t_n - \tau}|^2 h^\delta(\bar X_{t_n - \tau}) &= \sum_{n=1}^{M}e^{\lambda t_{n}}  |\bar X_{t_n-\tau}|^2 h^\delta(\bar X_{t_n - \tau}) +\sum_{n=M+1}^{l}e^{\lambda t_{n}}  |\bar X_{t_n-\tau}|^2 h^\delta(\bar X_{t_n - \tau}) \nonumber \\
&\leq C + e^{\lambda h_{\max} M}\sum_{n=1}^{l}e^{\lambda t_n}  |\hat X_{t_n}|^2 h_n,
\end{align}
Substituting Equation \eqref{lemma - eq0B} into \eqref{lemma - eq0A}, we obtain
\begin{align} \label{lemma - eq1}
2 \alpha_2 e^{\alpha_1 h_{\max}} \sum_{n=1}^{l} e^{-(\alpha_1-\lambda) t_n} h_n  \sum_{k=0}^{n-1} &e^{\alpha_1 t_k}  |\bar X_{t_k - \tau}|^2 h_k  \nonumber \\
&\leq  C + \ff{2 \alpha_2 e^{2\alpha_1 h_{max}}e^{\lambda h_{max} M}}{\alpha_1 - \lambda} \sum_{n=1}^{l}e^{\lambda t_n}  |\hat X_{t_n}|^2 h_n,
\end{align} 
Similarly we obtain
\begin{align} \label{lemma - eq2}
e^{\alpha_1 h_{\max}} \sum_{n=0}^{l} &e^{-(\alpha_1-\lambda) t_n} h_n \sum_{k=0}^{n-1} e^{\alpha_1 t_k}|g(\hat X_{t_k},\bar X_{t_k - \tau})|^2 (|\Delta W_k|^2 - h_k d) \nonumber \\
&\leq  \ff{2e^{\alpha_1 h_{\max}}}{\alpha_1-\lambda }\sum_{n=1}^{l}e^{\lambda t_n} |g(\hat X_{t_n},\bar X_{t_n - \tau})|^2 (|\Delta W_n|^2 - h_n d).
\end{align}
and
\begin{align} \label{lemma - eq3}
	e^{\alpha_1 h_{\max}} 2 \sum_{n=0}^{l} &e^{-(\alpha_1-\lambda) t_n} h_n \sum_{k=0}^{n-1} e^{\alpha_1 t_k} \langle \hat X_{t_k}+f(\hat X_{t_k},\bar X_{t_k - \tau})h_n,g(\hat X_{t_k},\bar X_{t_k - \tau})\Delta W_k \rangle  \nonumber \\
	&\leq  \ff{2 e^{2\alpha_1 h_{\max}}}{\alpha_1-\lambda }\sum_{n=1}^{l}e^{\lambda t_n} \langle \hat X_{t_n}+f(\hat X_{t_n},\bar X_{t_n - \tau})h_n,g(\hat X_{t_n},\bar X_{t_n - \tau})\Delta W_n \rangle.
\end{align}

We observe that by condition \eqref{h_max}, $h_{max}$ is such that  $0< 2 \alpha_2 e^{2\alpha_1 h_{max}} <\alpha_1$. Then by choosing $\lambda$ small enough so $0< \ff{2 \alpha_2 e^{2\alpha_1 h_{max}}e^{\lambda h_{max} M}}{\alpha_1 - \lambda} <1$ and by substituting Equations \eqref{lemma - eq1}, \eqref{lemma - eq2} and \eqref{lemma - eq3} into \eqref{lemma - eq0},  we obtain the final result.

\end{proof}

We are now in the position to give 

{\bf Proof of Theorem \ref{a.s. exp stab SDDE}}. 
From \eq{dis-scheme} and \eq{eq h - stability}, we have
\begin{align*} 
|\hat X_{t_{n+1}}|^2 &= |\hat X_{t_n}|^2 +2h_n(\langle \hat X_{t_n},f(\hat X_{t_n},\bar X_{t_n - \tau})\rangle + \ff 1 2 h_n|f(\hat X_{t_n},\bar X_{t_n - \tau})|^2)   \\
&+  2\langle \hat X_{t_n}+ f(\hat X_{t_n},\bar X_{t_n - \tau})h_n,g(\hat X_{t_n},\bar X_{t_n - \tau})\Delta W_n \rangle + |g(\hat X_{t_n},\bar X_{t_n - \tau}) \Delta W_n|^2 \\
&\leq  |\hat X_{t_n}|^2 +2h_n(\langle \hat X_{t_n},f(\hat X_{t_n},\bar X_{t_n - \tau})\rangle + \ff 1 2 h_n|f(\hat X_{t_n},\bar X_{t_n - \tau})|^2 + \ff d 2 |g(\hat X_{t_n},\bar X_{t_n - \tau})|^2) \\
&+	2\langle \hat X_{t_n}+ f(\hat X_{t_n},\bar X_{t_n - \tau})h_n,g(\hat X_{t_n},\bar X_{t_n - \tau})\Delta W_n \rangle	+ |g(\hat X_{t_n},\bar X_{t_n - \tau})|^2(|\Delta W_n|^2 - h_n d) \\
&\leq  |\hat X_{t_n}|^2 -2 \alpha_1 h_n |\hat X_{t_n}|^2 + 2 \alpha_2 h^\delta(\bar X_{t_n - \tau}) |\bar X_{t_n - \tau}|^2 \\
&+ 2\langle \hat X_{t_n}+ f(\hat X_{t_n},\bar X_{t_n - \tau})h_n,g(\hat X_{t_n},\bar X_{t_n - \tau})\Delta W_n \rangle + |g(\hat X_{t_n},\bar X_{t_n - \tau})|^2(|\Delta W_n|^2 - h_n d).
\end{align*}
Now we multiply by $e^{\lambda t_{n+1}},$ where $\lambda \in (0,\alpha_1)$ is the one from Lemma \ref{lem - stability}, which makes equation \eqref{lem - eq0} to hold true. Then using the fact that $1+x \leq e^x$ with $x = -2h_n \alpha_1$, yields
\begin{align*}
	e^{\lambda t_{n+1}}|\hat X_{t_{n+1}}|^2 &\leq  e^{\lambda t_n}|\hat X_{t_n}|^2 + 2  \alpha_2 e^{\lambda t_{n+1}}  |\bar X_{t_n - \tau}|^2 h_n 	+ e^{\lambda t_{n+1}}|g(\hat X_{t_n},\bar X_{t_n - \tau})|^2(|\Delta W_n|^2 - h_n d) \\
	&+ 2 e^{\lambda t_{n+1}} \langle \hat X_{t_n}+ f(\hat X_{t_n},\bar X_{t_n - \tau})h_n,g(\hat X_{t_n},\bar X_{t_n - \tau})\Delta W_n \rangle.
\end{align*}
Note that in the equation above we have used the fact that $e^{- h_n \alpha_1} \leq e^{- h_n \lambda}$.
Solving the recurrence and using the bound $h_{\max}$ we have
\begin{align*} 
	e^{\lambda t_n}&|\hat X_{t_n}|^2 \leq  |X_0|^2 	+ e^{\lambda h_{\max}} \Bigg\{ \sum_{k=0}^{n-1} e^{\lambda t_k}|g(\hat X_{t_k},\bar X_{t_k - \tau})|^2(|\Delta W_k|^2 - h_k d) \\
	&+2 \alpha_2 \sum_{k=0}^{n-1} e^{\lambda t_k}  |\bar X_{t_k - \tau}|^2 h^\delta(\bar X_{t_k - \tau}) + 2 \sum_{k=0}^{n-1} e^{\lambda t_k} \langle \hat X_{t_k}+f(\hat X_{t_k},\bar X_{t_k - \tau})h_k,g(\hat X_{t_k},\bar X_{t_k - \tau})\Delta W_k \rangle \Bigg\}.
\end{align*}
Using \eqref{lemma - eq0B}, we obtain
\begin{align} \label{theo-eq0-stability}
	e^{\lambda t_n}|\hat X_{t_n}|^2 &\leq  |X_0|^2 	+ e^{\lambda h_{\max}} \Bigg\{ \sum_{k=0}^{n-1} e^{\lambda t_k}|g(\hat X_{t_k},\bar X_{t_k - \tau})|^2(|\Delta W_k|^2 - h_k d) + C  \nonumber\\
	&+ e^{\lambda h_{max} M}\sum_{k=1}^{n-1}e^{\lambda t_k}  |\hat X_{t_k}|^2 h_k + 2 \sum_{k=0}^{n-1} e^{\lambda t_k} \langle \hat X_{t_k}+f(\hat X_{t_k},\bar X_{t_k - \tau})h_k,g(\hat X_{t_k},\bar X_{t_k - \tau})\Delta W_k \rangle \Bigg\}.
\end{align}
Substituting Equation \eqref{lem - eq0} (from Lemma \ref{lem - stability}) into \eqref{theo-eq0-stability} yields
\begin{align*}
	e^{\lambda t_n}|\hat X_{t_n}|^2 &\leq  |X_0|^2 	+ C + C \sum_{k=0}^{n-1} e^{\lambda t_k}|g(\hat X_{t_k},\bar X_{t_k - \tau})|^2(|\Delta W_k|^2 - h_k d) \\
	&+ C \sum_{k=0}^{n-1} e^{\lambda t_k} \langle \hat X_{t_k}+f(\hat X_{t_k},\bar X_{t_k - \tau})h_k,g(\hat X_{t_k},\bar X_{t_k - \tau})\Delta W_k \rangle \Bigg\}\\
	&\leq C + C	\{M_n +N_n\},
\end{align*}
where:
\begin{itemize}
	\item $M_n:= \sum_{k=0}^{n-1} e^{\lambda t_k}|g(\hat X_{t_k},\bar X_{t_k - \tau})|^2(|\Delta W_k|^2 - h_k d);$
	\item $N_n:= \sum_{k=0}^{n-1} e^{\lambda t_k} \langle \hat X_{t_k}+f(\hat X_{t_k},\bar X_{t_k - \tau})h_k,g(\hat X_{t_k},\bar X_{t_k - \tau})\Delta W_k \rangle$;
	\item $C$ is a positive constant (that changed from the second to the last line) dependent on $\omega \in \Omega$ and on the constants $\alpha_1,\alpha_2, h_{max}$ and $\lambda$, but not on $t_n$.
\end{itemize}	
Taking logarithms and dividing by $t_n$, it follows that
$$ \ff 1 {t_n} \log(e^{\lambda t_n} |X_{t_n}|^2) \leq \ff 1 {t_n} \log \left( C + C \{M_n +N_n\}\right).$$
We observe that
\begin{align*}
	\E[M_{n+1} | \F_{t_n}] &= \E[e^{\lambda t_n}|g(\hat X_{t_n},\bar X_{t_n - \tau})|^2(|\Delta W_n|^2 - h_n d) + M_n  | \F_{t_n}] \\
	&= e^{\lambda t_n}|g(\hat X_{t_n},\bar X_{t_n - \tau})|^2 (\E[|\Delta W_n|^2] - h_n d)+ M_n =M_n
\end{align*}
and
\begin{align*}
	\E[N_{n+1} | \F_{t_n}] &= \E[ 2 e^{\lambda t_n} \langle \hat X_{t_n}+ f(\hat X_{t_n},\bar X_{t_n - \tau})h_n,g(\hat X_{t_n},\bar X_{t_n - \tau})\Delta W_n \rangle + N_n  | \F_{t_n}] \\
	&= 2 e^{\lambda t_n} \langle \hat X_{t_n}+ f(\hat X_{t_n},\bar X_{t_n - \tau})h_n,g(\hat X_{t_n},\bar X_{t_n - \tau}) \E[\Delta W_n] \rangle + N_n =N_n.
\end{align*}
Hence $M+N$ is a local martingale with respect to $\{\F_{t_n}\}.$ Thus by the discrete semimartingale convergence theorem (see lemma 2 in \cite{Wu}), one can see that
$$\lim_{n \rightarrow \infty} (M_n+N_n) < \infty \ \text{ a.s.}$$
Therefore,
$$ \limsup_{n \rightarrow \infty} \ff 1 {t_n} \log(e^{\lambda t_n} |\hat X_{t_n}|^2) \leq 0 \text{ a.s.} $$
This is
$$ \limsup_{n \rightarrow \infty} \ff {\log|\hat X_{t_n}|}{t_n} \leq - \ff \lambda 2 \text{ a.s.} $$	
The proof is therefore complete.\hfill $\Box$

\begin{rem}
In the Wei and Giles \cite{Giles}, the almost sure exponential stability of the approximate adaptive EM solution has not been investigated.  Here we would like to point out that the adaptive EM solution of SDEs also reproduce the almost sure exponential stability as SDDEs. A similar result is achieved in \cite{Higham3} by using the more computationally expensive BEM method. Let $\{W_t\}_{t \geq 0}$ be a $d$-dimensional Brownian motion. Consider the $m$-dimensional SDE
\begin{equation} \label{SDE'}
	d \tilde Y_t = f( \tilde Y_t)dt + g( \tilde Y_t)dW_t
\end{equation}
for $t \geq 0$ where $f:\R^m \rightarrow \R^m$ and $g:\R^m \rightarrow \R^{m \times d}$ are Borel-measurable functions, and initial data $ \tilde Y_0 =\xi \in L^2_{\F_0}(\Omega;\R^m),$ i.e. $\xi$ is a $\F_0$-measurable $\mathbb R^m$-valued random variable with $E|\xi|^2 < \infty.$ In this case Assumption \ref{f and g - stability} can be written as 
\begin{assu} \label{assu f and g'} The functions $f$ and $g$ satisfy the local Lipschitz condition: for every $R>0$ there exists a positive constant $C_R$ such that
	\begin{equation} \label{LL'}
		| f(x)-f(y)| +||g(x)-g(y)|| \leq C_R(|x-y|
	\end{equation}
	for all $x,y \in \R^m$ with $|x|,|y| \leq R.$ Furthermore, there exists a constant $\alpha \geq 0$ such that for all $x \in \R^m,$ $f$ and $g$ satisfy 
	\begin{equation} \label{monotone condition}
		\<x,f(x)\> + \ff 1 2 |g(x)|^2 \leq -\alpha |x|^2, \quad \alpha >0.
	\end{equation}	
\end{assu}

Under the conditions \eqref{LL'} and \eqref{monotone condition}, the SDE \eqref{SDE'} has a unique solution (Theorem 2.3.6 in \cite{Mao}).  

In contrast to the EM solution, now we will see that the adaptive approximate solution of the SDE  preserves the stability of the exact solution.
We define the discrete-time adaptive approximate solution to the SDE \eq{SDE'} as
\begin{equation} \label{def h'}
	\tilde X_0 :=  \tilde Y_0, \quad h^\delta_n:=h^\delta(   \tilde X_{t_n}), \quad t_{n+1}:= t_n +h_n,
\end{equation}
and
\begin{equation} \label{dis-time approx sol II'}
	 \tilde  X_{t_{n+1}}:=   \tilde X_{t_n} + f(  \tilde X_{t_n})h^\delta_n + g( \tilde  X_{t_n})\Delta W_n,
\end{equation}
where $\Delta W_n:=W_{t_{n+1}}-W_{t_n}.$
Now, Assumption \ref{h - stability} takes the form 
\begin{assu}\label{h assu for stability}
The time-step function $h^\delta$ satisfies
\begin{equation}  \label{eq h assu for stability}
\langle x,f(x) \rangle + \ff d 2 |g(x)|^2 + \ff 1 2 h^\delta(x)|f(x)|^2 \leq -\alpha|x|^2, \quad \alpha >0	
\end{equation}	
for all $x \in \R.$	Furthermore, $h^\delta$ is uniformly bounded by the real number $h_{\max}^\delta \in (0,\infty).$
	
\end{assu}

\begin{thm}\label{a.s. exp stab SDE}
Consider the SDE \eqref{SDE'}. If $f$ and $g$ satisfy Assumption \ref{assu f and g'} and $h^\delta$ satisfies Assumption \ref{h assu for stability}, then the adaptive approximate solution \eq{dis-time approx sol II'} is almost sure exponentially stable, i.e. there exists $\lambda >0$ such that
$$\limsup_{n \rightarrow \infty} \ff {\log| \tilde X_{t_n}|}{t_n} \leq - \lambda  \text{ a.s.}$$ 
\end{thm}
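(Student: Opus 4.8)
The strategy is to adapt the proof of Theorem~\ref{a.s. exp stab SDDE} to the delay-free setting, where it becomes considerably shorter: since the right-hand side of the dissipativity condition \eqref{eq h assu for stability} is $-\alpha|x|^2$ with no residual $|y|^2$-term, there is no quantity of the type $\sum_k e^{\lambda t_k}|\bar X_{t_k-\tau}|^2 h^\delta(\bar X_{t_k-\tau})$ that must be reabsorbed, and hence the fixed-point/Gronwall argument of Lemma~\ref{lem - stability} is not needed. The first step is to take squared norms in the recursion \eqref{dis-time approx sol II'} (write $h_n:=h^\delta(\tilde X_{t_n})$ and $\Delta W_n:=W_{t_{n+1}}-W_{t_n}$) and expand:
\begin{align*}
|\tilde X_{t_{n+1}}|^2 &= |\tilde X_{t_n}|^2 + 2h_n\Big(\langle \tilde X_{t_n},f(\tilde X_{t_n})\rangle + \ff12 h_n|f(\tilde X_{t_n})|^2\Big) \\
&\quad + 2\big\langle \tilde X_{t_n}+f(\tilde X_{t_n})h_n,\ g(\tilde X_{t_n})\Delta W_n\big\rangle + |g(\tilde X_{t_n})\Delta W_n|^2 .
\end{align*}

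Next, I would use $|g(\tilde X_{t_n})\Delta W_n|^2 \le |g(\tilde X_{t_n})|^2|\Delta W_n|^2 = d\,h_n|g(\tilde X_{t_n})|^2 + |g(\tilde X_{t_n})|^2\big(|\Delta W_n|^2-d\,h_n\big)$ and then the dissipativity condition \eqref{eq h assu for stability}, which bounds the drift part $2h_n\big(\langle \tilde X_{t_n},f(\tilde X_{t_n})\rangle + \ff12 h_n|f(\tilde X_{t_n})|^2 + \ff d2|g(\tilde X_{t_n})|^2\big)$ by $-2\alpha h_n|\tilde X_{t_n}|^2$. This gives
$$|\tilde X_{t_{n+1}}|^2 \le (1-2\alpha h_n)|\tilde X_{t_n}|^2 + 2\big\langle \tilde X_{t_n}+f(\tilde X_{t_n})h_n,\ g(\tilde X_{t_n})\Delta W_n\big\rangle + |g(\tilde X_{t_n})|^2\big(|\Delta W_n|^2-d\,h_n\big).$$
Fixing any $\lambda\in(0,2\alpha)$ and multiplying by $e^{\lambda t_{n+1}}$, the inequalities $1-2\alpha h_n\le e^{-2\alpha h_n}\le e^{-\lambda h_n}$ together with $t_{n+1}=t_n+h_n$ give $e^{\lambda t_{n+1}}(1-2\alpha h_n)\le e^{\lambda t_n}$, so that telescoping yields
$$e^{\lambda t_n}|\tilde X_{t_n}|^2 \le |\tilde X_0|^2 + 2N_n + M_n,$$
where $N_n:=\sum_{k=0}^{n-1}e^{\lambda t_{k+1}}\big\langle \tilde X_{t_k}+f(\tilde X_{t_k})h_k,\ g(\tilde X_{t_k})\Delta W_k\big\rangle$ and $M_n:=\sum_{k=0}^{n-1}e^{\lambda t_{k+1}}|g(\tilde X_{t_k})|^2\big(|\Delta W_k|^2-d\,h_k\big)$.

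The key point is then that $\{2N_n+M_n\}_{n\ge0}$ is a local martingale with respect to $\{\F_{t_n}\}$. Indeed $h_k$, $\tilde X_{t_k}$ and $t_{k+1}=t_k+h_k$ are all $\F_{t_k}$-measurable, and by the strong Markov property of Brownian motion at the stopping time $t_k$ (exactly as used in the proof of Lemma~\ref{lem - aux boundedness of the pth moments}), conditionally on $\F_{t_k}$ the increment $\Delta W_k$ is a Gaussian vector in $\R^d$ with mean $0$ and covariance $h_k I$; hence $\E[\Delta W_k\mid\F_{t_k}]=0$ and $\E[|\Delta W_k|^2\mid\F_{t_k}]=d\,h_k$, so each summand has vanishing conditional expectation. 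Since $e^{\lambda t_n}|\tilde X_{t_n}|^2\ge0$ we get $2N_n+M_n\ge-|\tilde X_0|^2$, i.e.\ the local martingale $2N_n+M_n$ is bounded below, hence converges a.s.\ as $n\to\infty$ by the discrete semimartingale convergence theorem (Lemma~2 in \cite{Wu}). Consequently $C(\omega):=\sup_{n\ge0}e^{\lambda t_n}|\tilde X_{t_n}|^2<\infty$ a.s. This bound also forces $t_n\to\infty$ a.s.: if $t_n$ stayed bounded on a set of positive probability, then on that set $|\tilde X_{t_n}|$ would be bounded, so $h_n=h^\delta(\tilde X_{t_n})$ would be bounded away from $0$ (by continuity and strict positivity of $h^\delta$ on a compact ball), contradicting boundedness of $t_n=\sum_{k<n}h_k$. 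Finally, taking logarithms in $e^{\lambda t_n}|\tilde X_{t_n}|^2\le C(\omega)$, dividing by $t_n$ and letting $n\to\infty$ gives $\limsup_{n\to\infty}\ff1{t_n}\log\big(e^{\lambda t_n}|\tilde X_{t_n}|^2\big)\le0$, that is $\limsup_{n\to\infty}\ff{\log|\tilde X_{t_n}|}{t_n}\le-\ff\lambda2$ a.s., which is the assertion with stability exponent $\lambda/2$.

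I do not expect a genuine obstacle. The only steps requiring a little care are the bookkeeping of the matrix norm of $g$ and of the dimensional constant $d$ when estimating $|g(\tilde X_{t_n})\Delta W_n|^2$, and the (routine) verification of the martingale property at the random times $t_n$ via the strong Markov property; the structurally delicate ingredient of the SDDE argument --- the reabsorption of the delayed term carried out in Lemma~\ref{lem - stability} --- simply does not arise in the SDE case, so the argument above is in effect a streamlined version of that of Theorem~\ref{a.s. exp stab SDDE}.
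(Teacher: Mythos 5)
Your proof is correct and follows essentially the same route as the paper, which states Theorem \ref{a.s. exp stab SDE} inside a remark without printing a proof, the intended argument being exactly the delay-free specialization of the proof of Theorem \ref{a.s. exp stab SDDE}. Your observation that Lemma \ref{lem - stability} becomes unnecessary because there is no delayed term to reabsorb is precisely the simplification the authors rely on, and your explicit check that the local martingale $2N_n+M_n$ is bounded below (and that $t_n\to\infty$) is, if anything, slightly more careful than the printed SDDE argument.
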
	
\end{rem}

\section{Simulations} \label{simulations}
In this section we present simulations which illustrate the results discussed in Section \ref{stab SDDEs}. Consider the SDDE \eqref{SDDE counterexample} with $\tau = 1$ and initial condition $Y(t)=100, -1 \leq t \leq 0.$ We simulated in Matlab paths of the EM solution of the SDDE \eqref{SDDE counterexample} using different step sizes, $\Delta$. As we have seen in section \ref{stab SDDEs} there is a positive probability that the EM solution explodes. In Table \ref{sim_EM} we present six different simulations of the EM solution for $\Delta = \num{2e-3}.$ We observe in simulations 1,3,4 and 5 the EM solution explodes. 

\begin{table}[H]
	\footnotesize
	\caption{Six simulations of the EM solution for $\Delta = \num{2e-3}$}
	\begin{tabular}{|c|r|r|r|r|r|r|r|r|r|r|r|r|}
		\hline
		\textbf{Time} & 0 & \num{2e-3} & \num{4e-3} & \num{6e-3} & \num{8e-3} & \num{10e-3}& \num{12e-3}& \num{14e-3}& \num{16e-3}& \num{18e-3}& \num{20e-3} \\ \hline
		\textbf{Sim 1} & 100 & 101.1 & 107.4 & -141.1 & 418.1 & \num{-1.4e+4} & \num{5.7e+8} & \num{-3.7e+22} & \num{1.1e+64} & \num{-2.3e+188} & Inf  \\ \hline
		\textbf{Sim 2} & 100 & -98   & 88.97 & -50.99 & -24.51& -21.33 & -19.37 & -17.29 & -16.15 & -15.13 & -14.87  \\ \hline
		\textbf{Sim 3} & 100 & -101.3 & 109.6 & -150.1 & 525.68 & \num{-2.8e+4} & \num{4.6e+9} & \num{-2e+25} & \num{1.6e+72} & \num{-8.3e+212} & Inf \\ \hline
		\textbf{Sim 4} & 100 & -101.9 & 108.5 & -143.9 & 452.6 & \num{-1.8e+4} & \num{1.2e+9} & \num{-3.3e+23} & \num{7.3e+66} & \num{-7.9e+196} & Inf \\ \hline
		\textbf{Sim 5} & 100 & -101.9 & 108.5 & -143.9 & 452.6 & \num{-1.8e+4} & \num{1.2e+9} & \num{-3.3e+23} & \num{7.3e+66} & \num{-7.9e+196} & Inf \\ \hline
		\textbf{Sim 6} & 100 & -99 & 91.8 & -63.44 & -11.65 & -11.03 & -10.87 & -10.27 & -10.17 & -9.91 & -10 \\ \hline
	\end{tabular}
	\label{sim_EM}
\end{table}

In Figure \ref{EM_explosion}, we graphed the logarithm of EM solution presented in Table \ref{sim_EM}. 
\begin{figure} [H]
	\caption{Simulations of the logarithm of the EM solution for $\Delta = \num{2e-3}$  \label{EM_explosion}}
	\includegraphics[scale=0.6]{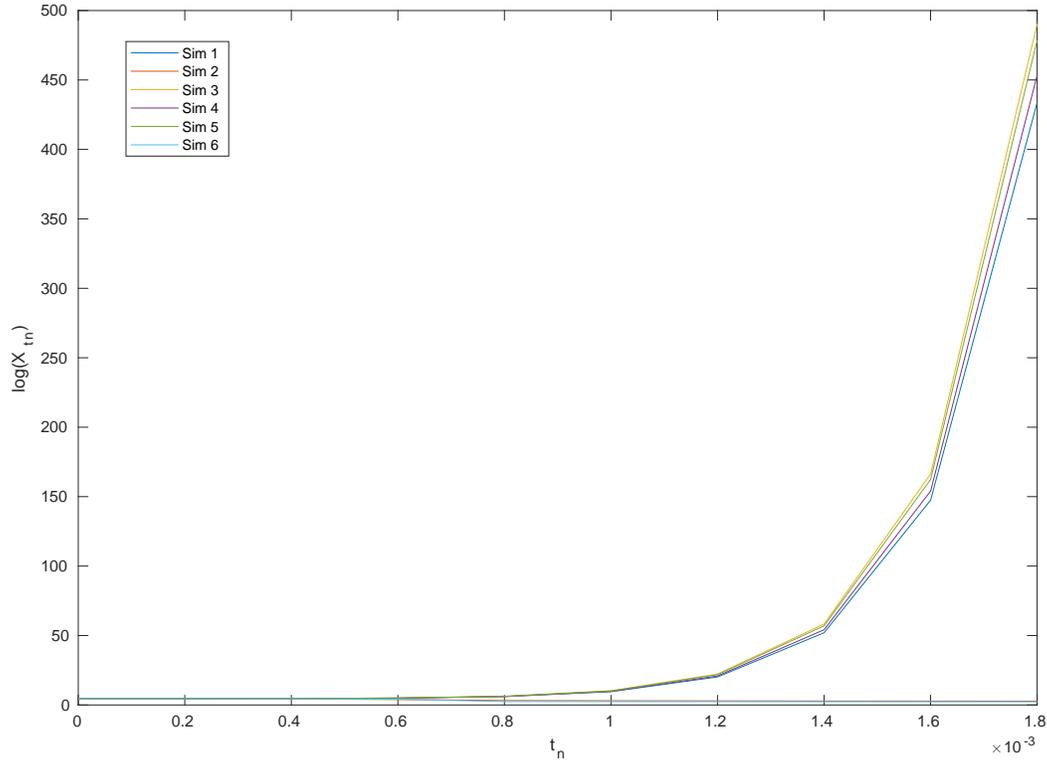}
\end{figure}

\textbf{Note:} From Lemma \ref{lemma-counter} we know that as $\Delta$ decreases, the probability of explosion decreases. Thus, for ``very small" $\Delta$ (say less than $10^{-4}$) we couldn't find one explosion in 100,000 simulations. 

In addition, we simulated the adaptive-EM solution of the SDDE \eqref{SDDE counterexample} using the function $h^\delta$ defined in \eqref{hs-counter}. As we proved in Section \ref{stab SDDEs}, the solution is a.s. exponentially stable. Figure \ref{adaptive} shows 10,000 paths of the adaptive-EM solution.

\begin{figure} [H]
	\caption{Simulations of adaptive-EM solution  \label{adaptive}}
	\includegraphics[scale=0.6]{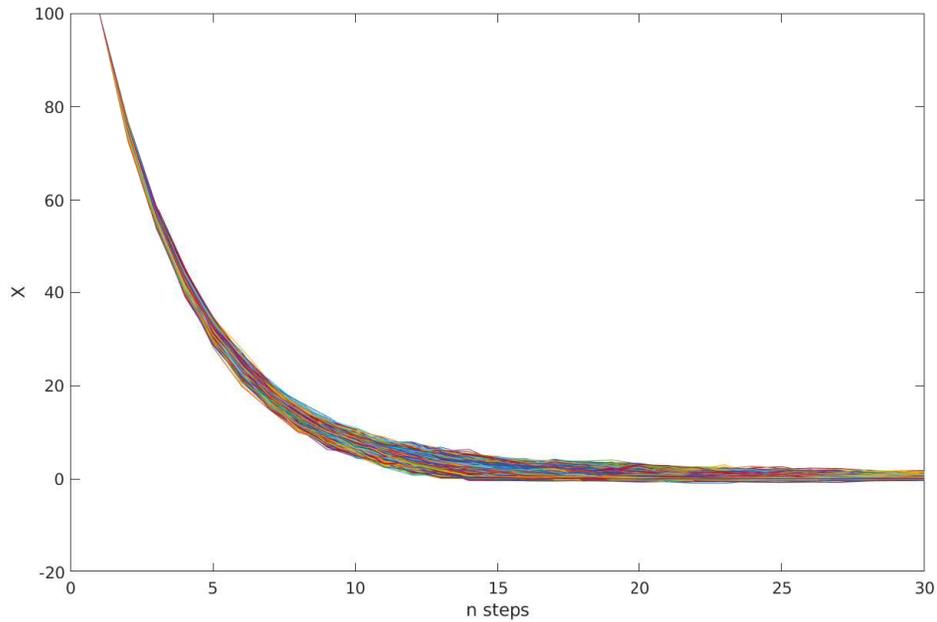}
\end{figure}

The next graph shows the first 10 values of $h^{\delta}(\hat X_{t_n})$ for two different simulations. At the start, $\hat X_0 =100,$ so the term $-\hat X^3_{t_n}$ dominates the equation, making the diffusion term very ``big'' (in absolute value) in comparison with $\hat X_{t_n}.$ Therefore, the adaptive step is very ``small" at the beginning and increases progressively as the ratio $f(\hat X_{t_n},\hat X_{\hat t_n})/\hat X_{t_n}$ decreases. This ensures all the simulated paths to decay exponentially in a ``small" number of steps.  
 
 \begin{figure} [H]
 	\caption{The first ten adaptive steps for two different simulations  \label{hs}}
 	\includegraphics[scale=0.8]{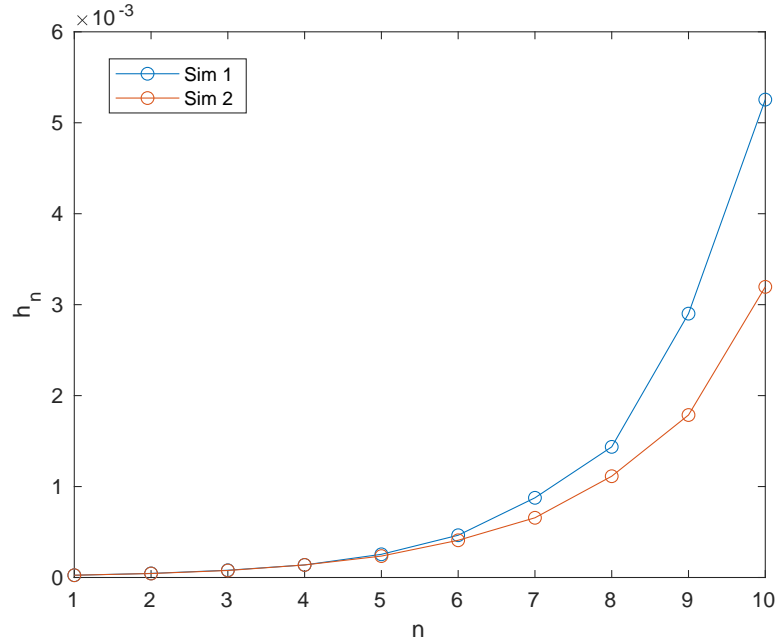}
 \end{figure}


\begin{thebibliography}{17}


\setlength{\baselineskip}{0.14in}
\parskip=0pt

\bibitem{Baker} Baker, C.T.H., Buckwar, E., Exponential stability in p-th mean of solutions, and of convergent Euler- type solutions, of stochastic delay differential equations. J. Comput. Appl. Math. 184 (2005), 404-427. 

\bibitem{Giles}  Fang, W., Giles, M.B. Adaptive Euler-Maruyama method for SDEs with nonglobally Lipschitz drift. Ann. Appl. Probab. 30 (2020),  526-560. 

\bibitem {Guo} Guo, Q., Mao, X., Yue, R., The Truncated Euler-Maruyama Method for Stochastic Differential Delay Equations, Numer. Algorithms, 78 (2018),  599-624.

\bibitem{Higham} Higham, D.J., Mao, X., Stuart, A. M., Strong convergence of Euler-type methods for nonlinear stochastic differential equations. SIAM J. Numer. Anal. 40 (2002), 1041-1063.

\bibitem{Higham2} Higham, D.J., Mao, X., Yuan, C., Almost sure and moment exponential stability in the numerical simulation of stochastic differential equations. SIAM J. Numer. Anal. 45 (2002), 592-609.

\bibitem{Higham3} Higham, D.J., Mao, X., Yuan, C., Almost sure and Moment exponential stability in the numerical simulation of stochastic differential equations. SIAM J. Numer. Anal. 45  (2007), 592-607.

\bibitem{Hutz} Hutzenthaler, M., Jentzen, A., Non-globally Lipschitz Counterexamples for the stochastic Euler scheme, Proceedings of The Royal Society A: Mathematical, Physical and Engineering Sciences, 467. 

\bibitem{Tamed} Ji, Y., Yuan, C., Tamed EM scheme of Neutral Stochastic Differential Delay Equations, J. Comput. Appl. Math. 326 (2017), 337-357.


\bibitem{Karatzas} Karatzas, I., Shreve, S.E., Brownian Motion and Stochastic Calculus, Springer, 1988,

\bibitem{Mao} Mao, X., Stochastic Differential Equations and Applications, Hoorwood Publishing, 2007.

\bibitem{Mao2}  Mao, X., Exponential Stability of Stochastic Differential Equations, Marcel Dekker, New York, 1995

\bibitem{Mao3}  Mao, X., A note on the LaSalle-type theorems for stochastic differential delay equations, J. Math. Anal. Appl. 268 (2002), 125-142.

\bibitem{Mao4}  Mao, X, Exponential stability of equidistant Euler-Maruyama approximations of stochastic differential
delay equations. J. Comput. Appl. Math. 200 (2007), 297-316 

\bibitem{Delay} Mao, X., Sabanis, S., Numerical solutions of stochastic differential delay equations under local Lipschitz condition. J. Comput. Appl. Math. 151 (2003), 215-227.

\bibitem{Wu} Wu, F., Mao, X., Szpruch, L., Almost sure exponential stability of numerical solutions for stochastic delay differential equations, Numerische Mathematik  115(2010), 681-697.  


\end{thebibliography}
\end{document}